\newtheorem{definition}{Definition}[section]
\newtheorem{theorem}[definition]{Theorem}
\newtheorem{proposition}[definition]{Proposition}
\newtheorem{corollary}[definition]{Corollary}
\newtheorem{lemma}[definition]{Lemma}
\newtheorem{remark}[definition]{Remark}
\theoremstyle{definition}
\newtheorem{example}[definition]{Example}
\newcommand*\diff{\mathop{}\!\mathrm{d}}
\newcommand{\intco}[2]{\int\limits_{[#1,#2)}}
\newcommand{\stirlingone}[2]{\genfrac{[}{]}{0pt}{}{#1}{#2}}
\newcommand{\stirlingtwo}[2]{\genfrac{\lbrace}{\rbrace}{0pt}{}{#1}{#2}}
\renewcommand{\phi}{\varphi}
\DeclareMathOperator{\ZZ}{\mathbb{Z}}
\DeclareMathOperator{\NN}{\mathbb{N}}
\DeclareMathOperator{\RR}{\mathbb{R}}
\DeclareMathOperator{\idop}{\mathbbm{1}}
\DeclareMathOperator{\Leb}{Leb}
\DeclareMathOperator{\PP}{\mathbf{P}}
\DeclareMathOperator{\EE}{\mathbf{E}}
\DeclareMathOperator{\LL}{\mathcal{L}}
\DeclareMathOperator{\GG}{\mathcal{G}\!}
\DeclareMathOperator{\ee}{\mathrm{e}}
\title{On occupation times of one-dimensional diffusions}
\author{Paavo Salminen \qquad David Stenlund \\[0.5em]
	{\small \AA bo Akademi University} \\
	{\small Faculty of Science and Engineering} \\
	{\small FI-20500 \AA bo, Finland} \\
	{\small \ttfamily phsalmin@abo.fi, david.stenlund@abo.fi}
}
\date{\today}
\begin{document}

\maketitle

\begin{abstract}
In this paper we study the moment generating function and the moments of occupation time functionals of one-dimensional diffusions. Assuming, specifically, that the process lives on $\RR$ and starts at~0, we apply Kac's moment formula and the strong Markov property to derive an expression for the moment generating function in terms of the Green kernel of the underlying diffusion. Moreover, the approach allows us to derive a recursive equation for the Laplace transforms of the moments of the occupation time on $\RR_+$. If the diffusion has a scaling property, the recursive equation simplifies to an equation for the moments of the occupation time up to time 1. As examples of diffusions with scaling property we study in detail skew two-sided Bessel processes and, as a special case, skew Brownian motion. It is seen that for these processes our approach leads to simple explicit formulas. The recursive equation for a sticky Brownian motion is also discussed.
 \\ \\
Keywords: additive functional, arcsine law, Green kernel, oscillating Brownian motion, Brownian spider, Lamperti distribution
\\ \\
AMS Classification: 60J60, 60J55, 05A10, 60J65
\end{abstract}

\newpage
\tableofcontents
\newpage

\section{Introduction}

To determine the distribution of the time a stochastic process spends in a set (up to a given time) is a classical, much studied and well understood problem. For diffusions the Feynman--Kac formula is the basic tool to attack the problem. If the generator of the diffusion has smooth coefficients this approach calls for solving a parabolic differential equation with a boundary condition. See, e.g., for Brownian motion Durrett~\cite[Section~4]{Durrett}, and for diffusions Karatzas and Shreve~\cite[Section~5.7]{Karatzas}. We refer also to Borodin and Salminen~\cite{handbook} for explicit examples of various one-dimensional diffusions. 

Undoubtedly, the most famous occupation time distribution is the arcsine law, which dates back to L\'{e}vy~\cite{Levy}. According to this, for a standard Brownian motion~$W$, letting $A^W_1$ denote the time $W$ is positive up to time $1$, it holds that
\begin{equation}\label{eq_arcsine_law}
\PP_0\left(A_1^W \leq x\right) = \frac{2}{\pi} \arcsin(\sqrt{x}), \quad x \in [0,1],
\end{equation} 
where $\PP_0$ is the probability measure associated with $W$ (when initiated at~0).
For proofs of~\eqref{eq_arcsine_law} based on the Feynman--Kac formula, see~\cite[p.~273]{Karatzas} and M\"{o}rters and Peres~\cite[p.~207]{Moerters}. 
In the latter one, instead of inverting the double Laplace transform, the moments of the distribution are calculated, and that approach is, in a sense, closer to the one discussed in this paper. 

We are interested in the occupation times for a regular one-dimensional diffusion. More precisely, let $(X_t)_{t\geq 0}$ be such a diffusion and introduce, for $t\geq 0$, 
\begin{equation*}
A_t^X := \Leb \{s\in[0,t]:X_s\geq 0\},
\end{equation*} 
where $\Leb$ denotes the Lebesgue measure. Recall that in~\cite{Truman} Truman and Williams calculated, applying the Feynman-Kac method, a formula for the moment generating function of the occupation time up to an independent exponential time for a fairly general (positively recurrent) diffusion. Watanabe~\cite{Watanabe} extended the result to hold for a general regular (gap) diffusion exploiting the random time change techniques. However, earlier Barlow, Pitman and Yor~\cite{Barlow} derived the formula in case of a skew two sided Bessel process using excursion theory. This result was connected in~\cite{Watanabe} with a distribution found by Lamperti in~\cite{Lamperti}. In~\cite{PY} Pitman and Yor proved, and also extended, the general formula presented in~\cite{Watanabe} using the excursion theory (but they also discuss an approach via the Feynman-Kac method). We refer also to Watanabe, Yano and Yano~\cite{WYY}, where the inversion of the Laplace transform is discussed, and to Kasahara and Yano~\cite{KasaharaYano} for results of the asymptotic behavior of the density at~0. 

Our main contributions are, firstly, a new expression for the moment generating function of the occupation time up to an exponential time, as well as a recursive equation for the Laplace transforms of the moments of the occupation time. A novel feature in our analysis is, perhaps, that it is based explicitly on Kac's moment formula and not on the Feynman--Kac formula. In spite of the fact that both formulas have been known for decades we have not been able to find precisely these results in the literature. It is seen that the general formula in~\cite{Watanabe} can be obtained from our formula via some straightforward calculations. Secondly, the recursive formula for the moments is solved for skew two-sided Bessel process. Somewhat surprisingly, the result says that the moments are polynomials in the dimension and skewness parameters. Although the density of the occupation time is known in this case, it does not seem to be possible to find the general formula for the moments via integration, but numerical integration can, of course, be performed to check the formula for some given values of the parameters. Skew Brownian motion is a special case of a skew two-sided Bessel process obtained when the dimension parameter is $-1/2$, and in this case the formula for the moments is simpler. 

The paper is organized as follows. In the next section some basic ingredients from the theory of diffusions are recalled. We also introduce the diffusions for which the occupation times are studied later in the paper. In Section~\ref{section_Kac} we discuss Kac's moment formula for integral functionals. Although this formula is well known a short proof is included for completeness of the presentation. Section~\ref{MOM} contains the formula for the moment generating function, see~\eqref{eq_thmE0}, and it is also proved that this coincides with the formula in~\cite{Watanabe}. In Section~\ref{section_moments} we derive the recursive equation for the Laplace transforms of the moments of the occupation time, see Theorem~\ref{thm_rec}. In the proof a technical result, Lemma~\ref{lemma_f0}, is needed, the proof of which is given in Appendix. In Section~\ref{section_examples} we apply the results on a number of diffusions and present formulas for the skew two-sided Bessel process, skew Brownian motion, oscillating Brownian motion, Brownian spider and sticky Brownian motion.

\section{Preliminaries on diffusions}
\label{section_prelim}

Let $X=(X_t)_{t\geq 0}$ be a regular diffusion taking values on an interval $I\subseteq\RR$. For simplicity, it is assumed that $X$ is conservative, i.e., $\PP_x(X_t\in I)=1$ for all~$x\in I$ and $t\geq 0,$ where $\PP_x$ stands for the probability measure associated with~$X$ when initiated at~$x$. To fix ideas, we suppose that $0\in I$. In this section we briefly describe the setup for the diffusion~$X$. 

The notations $m$ and $S$ are used for the speed measure and the scale function, respectively. It is assumed that $S$ is normalized to satisfy $S(0)=0$. Recall that $X$ has a transition density~$p$ with respect to~$m$, that is, for any Borel subset~$B$ of~$I$ it holds that
\begin{equation*}
\PP_x\left(X_t\in B\right) = \int_B p(t;x,y)\,m(dy).
\end{equation*}
Moreover, for~$\lambda> 0$ let $\varphi_\lambda$ and $\psi_\lambda$ denote the decreasing and increasing, respectively, positive and continuous solutions of the generalized ODE (see~\cite[p.~18]{handbook}) 
\begin{equation*}
\frac{d}{dm}\frac{d}{dS} u=\lambda u.
\end{equation*} 
The solutions 
 $\psi_\lambda $ and $\varphi_\lambda $ are unique up to a multiplicative constant when appropriate boundary conditions are imposed. The Wronskian constant $\omega_{\lambda}$ is defined as 
\begin{align}\label{eq_wronskian}
\omega_{\lambda}&:=\psi^{+}_{\lambda}(x)\varphi_\lambda (x)-\psi_\lambda (x)\varphi^{+}_\lambda (x) \nonumber\\
&\:=\psi^{-}_{\lambda }(x)\varphi_\lambda (x)-\psi_\lambda (x)\varphi^{-}_{\lambda }(x),
\end{align} 
where the superscripts $^+$ and $^-$ denote the right and left derivatives with respect to the scale function~$S$. 
We remark that in case $x$ is not a sticky point it holds that $\psi^{+}_{\lambda}(x) = \psi^{-}_{\lambda}(x)$ and $\phi^{+}_{\lambda}(x) = \phi^{-}_{\lambda}(x)$ \cite[p.~129]{Ito} (cf.~\cite[Thm.~3.12]{Revuz}). 
The Green kernel (also called the resolvent kernel) \cite[p.~150]{Ito} is given by 
\begin{equation}\label{eq_greenkernel}
G_\lambda (x,y):=
\begin{cases}
w^{-1}_\lambda \psi_\lambda (x)\varphi_\lambda (y), & x\leq y,\\
w^{-1}_\lambda \psi_\lambda (y)\varphi_\lambda (x), & x\geq y,
\end{cases}
\end{equation}
and satisfies
\begin{equation*}
G_\lambda (x,y)=\int_0^\infty \ee^{-\lambda t}p(t;x,y)\, \diff t.
\end{equation*}
It is well known that the first hitting time $H_y:=\inf\{t\geq 0 : X_t=y\}$ has a density with respect to the Lebesgue measure. Especially for~$H_0$ we use the notation
\begin{equation*}
f(x;t) := \PP_x(H_0\in \diff t)/\diff t
\end{equation*}
for the $\PP_x$-density of~$H_0$ and
\begin{equation}\label{eq_Lhitting}
\widehat{f}(x;\lambda) := \int_0^\infty \ee^{-\lambda t}\PP_x(H_0\in \diff t) = \EE_x(\ee^{-\lambda H_0}) =
\begin{cases}
\dfrac{\varphi_\lambda (x)}{\varphi_\lambda (0)}, & x\geq 0, \\[1em]
\dfrac{\psi_\lambda (x)}{\psi_\lambda (0)}, & x\leq 0,
\end{cases}
\end{equation}
for its Laplace transform. Moreover, let
\begin{equation*}
\widehat{f}_\lambda^{(k)}(x;\lambda) := \frac{\diff^k}{\diff \lambda^k}\widehat{f}(x;\lambda) = (-1)^k \EE_x(H_0^k \ee^{-\lambda H_0}).
\end{equation*}
Using the continuity of~$\phi_\lambda$ it follows from~\eqref{eq_Lhitting} that $\lim_{x\downarrow 0}\widehat{f}(x;\lambda)=1$. We also need the following result regarding the $k$th derivative of~$\widehat{f}(x;\lambda)$, which is perhaps known, but we do not have any reference. 
\begin{lemma}\label{lemma_f0}
For any $\lambda >0$ and $k\geq 1$, 
\begin{equation}\label{eq_f0}
\lim_{x\downarrow 0} \widehat{f}_\lambda^{(k)}(x;\lambda) = 0.
\end{equation}
\end{lemma}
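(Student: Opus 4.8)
The plan is to avoid differentiating $\varphi_\lambda$ altogether and instead work through the probabilistic representation
$$
\widehat{f}_\lambda^{(k)}(x;\lambda) = (-1)^k\,\EE_x\!\left(H_0^k\,\ee^{-\lambda H_0}\right),
$$
recorded just above the statement. Since $k\geq 1$, proving~\eqref{eq_f0} reduces to showing that $\EE_x(H_0^k\,\ee^{-\lambda H_0})\to 0$ as $x\downarrow 0$. The guiding picture is that, by regularity of the diffusion, $H_0$ collapses to~$0$ under $\PP_x$ as $x\downarrow 0$, while the bounded continuous map $g(t):=t^k\ee^{-\lambda t}$ satisfies $g(0)=0$ exactly because $k\geq 1$; so $\EE_x(g(H_0))$ ought to vanish. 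Phrased this way the statement is just the weak convergence $H_0\Rightarrow\delta_0$ tested against $g$, but I would prefer to give the self-contained truncation argument below, which needs only the already established limit $\lim_{x\downarrow 0}\widehat{f}(x;\lambda)=1$.

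First I would quantify the collapse of $H_0$. For fixed $\varepsilon>0$ the function $t\mapsto 1-\ee^{-\lambda t}$ is nonnegative and bounded below by $1-\ee^{-\lambda\varepsilon}>0$ on $\{t>\varepsilon\}$, so Markov's inequality yields
$$
\PP_x(H_0>\varepsilon)\;\leq\;\frac{\EE_x\!\left(1-\ee^{-\lambda H_0}\right)}{1-\ee^{-\lambda\varepsilon}}\;=\;\frac{1-\widehat{f}(x;\lambda)}{1-\ee^{-\lambda\varepsilon}}.
$$
Because $\widehat{f}(x;\lambda)\to 1$ as $x\downarrow 0$, the right-hand side tends to~$0$; hence $\PP_x(H_0>\varepsilon)\to 0$ for every fixed $\varepsilon>0$.

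Next I would split the expectation at level~$\varepsilon$. On $\{H_0\leq\varepsilon\}$ one has $H_0^k\ee^{-\lambda H_0}\leq\varepsilon^k$, and on $\{H_0>\varepsilon\}$ I would use the crude uniform bound $H_0^k\ee^{-\lambda H_0}\leq C_k:=\sup_{t\geq 0}t^k\ee^{-\lambda t}=(k/(\lambda\ee))^k<\infty$. Writing $\EE_x(H_0^k\ee^{-\lambda H_0})$ as the sum of its contributions over $\{H_0\leq\varepsilon\}$ and $\{H_0>\varepsilon\}$, these bounds give
$$
0\;\leq\;\EE_x\!\left(H_0^k\,\ee^{-\lambda H_0}\right)\;\leq\;\varepsilon^k+C_k\,\PP_x(H_0>\varepsilon).
$$
Letting $x\downarrow 0$ and invoking the previous step gives $\limsup_{x\downarrow 0}\EE_x(H_0^k\ee^{-\lambda H_0})\leq\varepsilon^k$, and since $\varepsilon>0$ is arbitrary the $\limsup$ is~$0$. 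As the quantity is nonnegative, the limit is~$0$, which is~\eqref{eq_f0}.

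The only nontrivial input is the collapse $H_0\to 0$, and that is handed to us through the hypothesis $\widehat{f}(x;\lambda)\to 1$; everything else is elementary truncation. The main thing to watch is the role of $k\geq 1$: it is what makes the $\{H_0\leq\varepsilon\}$ contribution $O(\varepsilon^k)$ rather than $O(1)$, consistent with the fact that for $k=0$ the limit equals~$1$ and~\eqref{eq_f0} genuinely fails. A purely analytic alternative --- differentiating $\varphi_\lambda(x)/\varphi_\lambda(0)$ in~$\lambda$ and letting $x\downarrow 0$ --- would instead force one to justify interchanging $\lim_{x\downarrow 0}$ with $k$-fold $\lambda$-differentiation and to control the joint regularity of $\varphi_\lambda$ near the origin, which is precisely the technical burden the probabilistic route sidesteps.
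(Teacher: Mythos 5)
Your proof is correct, and it takes a genuinely different route from the paper's. The paper proves the lemma in the Appendix via the spectral representation of the hitting density from Salminen--Vallois: it writes $f(x;t)=\int_0^\infty \ee^{-\gamma t}C(x;\gamma)\,\widehat{\Delta}(\diff\gamma)$, shows $\lim_{x\downarrow 0}C(x;\gamma)=0$ using estimates on the coefficients $c_n(x)$, and then justifies passing the limit inside the double integral by dominated convergence together with the integrability condition on $\widehat{\Delta}$. You instead exploit only the probabilistic representation $\widehat{f}_\lambda^{(k)}(x;\lambda)=(-1)^k\EE_x(H_0^k\ee^{-\lambda H_0})$ (stated in the paper just before the lemma) and the already-established limit $\widehat{f}(x;\lambda)\to 1$: Markov's inequality applied to $1-\ee^{-\lambda H_0}$ converts that limit into $\PP_x(H_0>\varepsilon)\to 0$, and the truncation bound $\EE_x(H_0^k\ee^{-\lambda H_0})\le \varepsilon^k + (k/(\lambda\ee))^k\,\PP_x(H_0>\varepsilon)$ finishes the job. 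Each step checks out, including the handling of a possibly infinite $H_0$ (the integrand vanishes there and the Markov bound still applies), and your remark about why $k\ge 1$ is essential is exactly right. What your argument buys is economy and self-containedness: it needs no spectral machinery, no external estimates, and no interchange-of-limits justification beyond elementary bounds, and it applies verbatim to any regular diffusion once continuity of $\phi_\lambda$ at $0$ is known. What the paper's route buys is that the spectral representation gives finer quantitative information about $f(x;t)$ near $x=0$ (useful elsewhere), but for this particular lemma it is heavier machinery than necessary; your proof could replace the paper's Appendix without loss.
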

\begin{proof}
A proof based on spectral representations is given in Appendix. 
\hfill\end{proof}

Let $t>0$ and consider the occupation time on $\RR_+:=[0,+\infty)$ up to time $t$: 
\begin{equation}\label{def_At}
A_t^X := \Leb \{s\in[0,t]:X_s\geq 0\}=\int_0^t \idop_{[0,\infty)}(X_s)\,\diff s.
\end{equation}
If~$X$ is a self-similar process (see~\cite{Sato} p.~70), that is, for any $a\geq 0$ there exists $b\geq0$ such that
\begin{equation}\label{eq_scaling}
(X_{at})_{t\geq 0} \overset{(d)}{=} (b X_t)_{t\geq 0},
\end{equation}
then, as is easily seen, for any fixed $t\geq 0$,
\begin{equation}\label{scalingA}
A_t^X \overset{(d)}{=} t A_1^X.
\end{equation}

\begin{example}\label{ex_bessel}
\emph{Skew two-sided Bessel processes} were introduced in~\cite{Barlow}; see also~\cite{Alili,Blei,Watanabe}. A skew two-sided Bessel process $(X^{(\nu,\beta)}_t)_{t\geq 0}$ with parameter $\nu \in (-1,0)$ and skewness parameter $\beta\in(0,1)$ is a diffusion on $\RR$ with the speed measure
\begin{equation*}
m_\nu(\diff x) = 
\begin{cases}
4\beta x^{2\nu+1} \diff x, & x>0, \\
4(1-\beta) |x|^{2\nu+1} \diff x, & x<0,
\end{cases}
\end{equation*}
and the scale function
\begin{equation*}
S_\nu(x) = 
\begin{cases}
-\frac{1}{4\beta\nu} x^{-2\nu}, & x\geq0, \\
\frac{1}{4(1-\beta)\nu} |x|^{-2\nu}, & x\leq0.
\end{cases}
\end{equation*}
The generator is given by
\begin{equation*}
\GG f(x) = \frac{1}{2} f''(x) + \frac{2\nu+1}{2x} f'(x), \; x\neq 0, \quad \GG f(0) = \GG f(0+) = \GG f(0-),
\end{equation*}
with the domain 
\begin{equation*}
\mathcal{D} = \{ f : f, \GG f \in \mathcal{C}_b(\RR) , \frac{\diff f}{\diff S}(0+) = \frac{\diff f}{\diff S}(0-) \}.
\end{equation*}

Recall that for a ``one-sided'' Bessel diffusion on $\RR_+$ reflected at zero \cite[p.~137]{handbook} we have the fundamental solutions
\begin{equation*}
\widehat{\psi}_\lambda(x) = x^{-\nu} I_\nu(x\sqrt{2\lambda}), \quad
\widehat{\phi}_\lambda(x) = x^{-\nu} K_\nu(x\sqrt{2\lambda}), 
\end{equation*}
for~$x>0$. The limits
\begin{equation*}
\widehat{\psi}_\lambda(0) 
= \biggl( \frac{\sqrt{2\lambda}}{2} \biggr)^\nu \frac{1}{\Gamma(1+\nu)}, \quad
\widehat{\phi}_\lambda(0) 
= \biggl( \frac{\sqrt{2\lambda}}{2} \biggr)^\nu \frac{\Gamma(-\nu)}{2}
\end{equation*}
follow from the fact that, for~$\nu\in(-1,0)$ and when $x\rightarrow 0$,
\begin{equation}\label{eq_besselzero}
I_\nu(x) \simeq \frac{1}{\Gamma(\nu+1)} \biggl( \frac{x}{2} \biggr)^\nu, \quad
K_\nu(x) \simeq \frac{\Gamma(-\nu)}{2} \biggl( \frac{x}{2} \biggr)^\nu.
\end{equation}
Let
\begin{equation*}
\psi_\lambda(x) := 
\begin{cases}
\dfrac{\pi}{2\beta\sin (-\pi\nu)}\,\widehat{\psi}_\lambda(x) - \dfrac{1-\beta}{\beta} \,\widehat{\phi}_\lambda(x), & x\geq 0, \\
\widehat{\phi}_\lambda(|x|), & x\leq 0,
\end{cases}
\end{equation*}
and 
\begin{equation*}
\phi_\lambda(x) := 
\begin{cases}
\widehat{\phi}_\lambda(x), & x\geq 0, \\
\dfrac{\pi}{2(1-\beta)\sin (-\pi\nu)}\,\widehat{\psi}_\lambda(|x|) - \dfrac{\beta}{1-\beta} \,\widehat{\phi}_\lambda(|x|), & x\leq 0.
\end{cases}
\end{equation*}
Since $\widehat{\psi}_\lambda$ and $\widehat{\phi}_\lambda$ are solutions of $\GG f(x) = \lambda f(x)$ on $\RR_+$, we immediately see that $\psi_\lambda$ is also a solution when $x>0$, while for~$x<0$
\begin{equation*}
\psi_\lambda^{\,\prime}(x) = -\widehat{\phi}_\lambda^{\,\prime}(|x|), \quad \psi_\lambda^{\,\prime\prime}(x) = \widehat{\phi}_\lambda^{\,\prime\prime\!}(|x|),
\end{equation*}
and, hence,
\begin{equation*}
\GG \psi_\lambda(x) 
= \frac{1}{2} \widehat{\phi}_\lambda^{\,\prime\prime\!}(|x|) + \frac{2\nu+1}{2|x|} \widehat{\phi}_\lambda^{\,\prime}(|x|) 
= \lambda \widehat{\phi}_\lambda (|x|) = \lambda \psi_\lambda(x). 
\end{equation*}
Thus, $\psi_\lambda$ is a solution of the equation
\begin{equation}\label{eq_GfLf}
\GG f(x) = \lambda f(x), \quad x\neq0.
\end{equation}
Furthermore, $\psi_\lambda$ is continuous since 
\begin{align*}
\lim_{x\downarrow 0} \psi_\lambda(x) &= \frac{\Gamma(-\nu)\Gamma(1+\nu)}{2\beta} \widehat{\psi}_\lambda(0) - \frac{\beta}{1-\beta} \widehat{\phi}_\lambda(0) \\
&= \biggl( \frac{\sqrt{2\lambda}}{2} \biggr)^\nu \frac{\Gamma(-\nu)}{2} = \widehat{\phi}_\lambda(0) = \lim_{x\uparrow 0} \psi_\lambda(x),
\end{align*}
using Euler's reflection formula. Notice that from 
\begin{equation*}
\widehat{\psi}_\lambda^{\,\prime}(x) = x^{-\nu} I_{\nu+1}(x\sqrt{2\lambda}), \quad \widehat{\phi}_\lambda^{\,\prime}(x) = - x^{-\nu} K_{\nu+1}(x\sqrt{2\lambda}),
\end{equation*}
and~\eqref{eq_besselzero} it follows that
\begin{equation*}
\lim_{x\downarrow 0} \; x^{2\nu+1} \widehat{\psi}_\lambda^{\,\prime}(x) = 0, \qquad
\lim_{x\downarrow 0} \; x^{2\nu+1}\widehat{\phi}_\lambda^{\,\prime}(x) = - \biggl( \frac{2}{\sqrt{2\lambda}} \biggr)^\nu \Gamma(1+\nu), 
\end{equation*}
and, therefore, 
\begin{align*}
\lim_{x\downarrow 0} \frac{\diff \psi_\lambda(x)}{\diff S} 
&= \frac{\pi}{\sin (-\pi\nu)} \lim_{x\downarrow 0} x^{2v+1}\widehat{\psi}_\lambda^{\,\prime}(x) - 2(1-\beta) \lim_{x\downarrow 0}x^{2v+1}\widehat{\phi}_\lambda^{\,\prime}(x) \\
&= - 2(1-\beta) \lim_{x\uparrow 0} |x|^{2v+1}\widehat{\phi}_\lambda^{\,\prime}(|x|) \\
&= \lim_{x\uparrow 0} \frac{\diff \psi_\lambda(x)}{\diff S},
\end{align*}
so the scale derivative of~$\psi_\lambda$ is also continuous. Finally, from the continuity of~$\psi_\lambda$ and the fact that $\widehat{\psi}_\lambda$ is an increasing and $\widehat{\phi}_\lambda$ a positive and decreasing function on $\RR_+$ (also note that $\sin (-\pi\nu)>0$), it follows that $\psi_\lambda$ is positive and increasing on $\RR$. After using a similar procedure for~$\phi_\lambda$ we conclude that the functions $\psi_\lambda$ and $\phi_\lambda$ are increasing and decreasing, respectively, positive and continuous solutions of~\eqref{eq_GfLf}.
These functions are also given in the recent paper~\cite{Alili}, albeit there with a different normalization. 

Using the functions $\psi_\lambda$ and $\phi_\lambda$ as defined above, the corresponding Wronskian is given by
\begin{equation*}
w_\lambda = \frac{\diff \psi_\lambda}{\diff S} \phi_\lambda - \frac{\diff \phi_\lambda}{\diff S} \psi_\lambda = \frac{\pi}{\sin (-\pi\nu)},
\end{equation*}
and the Green kernel can be obtained using~\eqref{eq_greenkernel}. In particular we get, for~$y>0$, that
\begin{align}
G_\lambda(0,y) &= w_\lambda^{-1} \psi_\lambda (0)\phi_\lambda (y) \nonumber\\
&= \frac{\sin (-\pi\nu)}{\pi} \widehat{\phi}_\lambda (0) \widehat{\phi}_\lambda (y) \nonumber\\
&= \frac{1}{2 \Gamma(\nu + 1)} \biggl( \frac{\sqrt{2\lambda}}{2} \biggr)^\nu y^{-\nu} K_\nu (y\sqrt{2\lambda}). \label{eq_Ghat}
\end{align}
Note that $G_\lambda(0,y) = \frac{1}{2} \widehat{G}_\lambda(0,y)$, where $\widehat{G}_\lambda$ is the Green kernel for the one-sided Bessel process reflected at zero \cite[p.~137]{handbook} and also that the skewness parameter $\beta$ is not present in the expression for~$G_\lambda(0,y)$. 

A skew two-sided Bessel process has the scaling property; more precisely, \eqref{eq_scaling}~holds with $b=\sqrt{a}$. This can be verified through a straightforward calculation using the Green kernel. 

For a skew two-sided Bessel process, the skewness parameter $\beta$ has a similar interpretation as for a skew Brownian motion, namely that it corresponds to the probability of the process being positive at any given time, when initiated at~0. This follows from
\begin{align}
\int_0^\infty \ee^{-\lambda t} \PP_0(X^{(\nu,\beta)}_t\geq 0) \diff t &= \int_0^\infty G_\lambda(0,y) m_\nu(\diff y) \nonumber\\
&= \int_0^\infty \frac{4\beta}{2 \Gamma(\nu + 1)} \biggl( \frac{\sqrt{2\lambda}}{2} \biggr)^\nu y^{\nu+1} K_\nu (y\sqrt{2\lambda}) \diff y \nonumber\\
&= \frac{\beta}{2^{\nu}\lambda\, \Gamma(1+\nu)} \int_0^\infty z^{1+\nu} K_\nu (z) \diff z \nonumber\\
&= \frac{\beta}{\lambda}, \label{eq_besselG}
\end{align}
where in the last step an integral formula for modified Bessel functions of the second kind \cite[Eq.~(6.561.16)]{Gradshteyn} has been applied. Inverting the Laplace transform gives that, for any $t>0$,
\begin{equation*}
\PP_0(X^{(\nu,\beta)}_t\geq 0) = \beta.
\end{equation*}

\end{example}

\begin{example}\label{ex_skewBM}
\emph{Skew Brownian motion} with skewness parameter $\beta \in (0,1)$ is a diffusion which behaves like a standard Brownian motion when away from a certain skew point, here taken to be 0, while the sign of every excursion from the skew point is chosen by an independent Bernoulli trial with parameter $\beta$, see~\cite{Appuhamillage,Barlow,Lejay,Walsh,Watanabe}. This corresponds to a skew two-sided Bessel process with $\nu=-1/2$, but for the convenience of the reader we write down explicit expressions. 
In this case we have the speed measure 
\begin{equation*}
m(\diff x) = 
\begin{cases}
4(1-\beta) \diff x, & x<0, \\
4\beta \diff x, & x>0,
\end{cases}
\end{equation*}
and the scale function 
\begin{equation*}
S(x) = 
\begin{cases}
\frac{x}{2(1-\beta)}, & x\leq 0, \\
\frac{x}{2\beta}, & x\geq 0.
\end{cases}
\end{equation*}
The fundamental solutions to~\eqref{eq_GfLf} are obtained by inserting $\nu=-1/2$ into the expressions for~$\psi_\lambda$ and $\phi_\lambda$ in the previous section, and after a suitable scaling we get
\begin{equation*}
\psi_\lambda(x) = 
\begin{cases}
\ee^{x\sqrt{2\lambda}} + \frac{1-2\beta}{\beta} \sinh (x\sqrt{2\lambda}), & x\geq 0, \\
\ee^{x\sqrt{2\lambda}}, & x\leq 0,
\end{cases}
\end{equation*}
and 
\begin{equation*}
\phi_\lambda(x) = 
\begin{cases}
\ee^{-x\sqrt{2\lambda}}, & x\geq 0, \\
\ee^{-x\sqrt{2\lambda}} + \frac{1-2\beta}{1-\beta} \sinh (x\sqrt{2\lambda}), & x\leq 0.
\end{cases}
\end{equation*}
The Wronskian is in this case $w_\lambda = 2\sqrt{2\lambda}$. We also get that
\begin{equation}
\label{SBMG}
G_\lambda(0,y) = \frac{1}{2 \sqrt{2\lambda}} \ee^{-y\sqrt{2\lambda}}, \quad y\geq 0.
\end{equation}

\end{example}

\begin{example}\label{ex_osc}
\emph{Oscillating Brownian motion} (see, e.g.,~\cite{Pigato}) is a diffusion $(\widetilde{X}_t)_{t\geq 0}$ which is characterized by the speed measure
\begin{equation*}
\widetilde{m}(\diff x) = 
\begin{cases}
(2/\sigma_{-}^2) \diff x, & x<0, \\
(2/\sigma_{+}^2) \diff x, & x>0,
\end{cases}
\end{equation*}
with $\sigma_{-}>0, \sigma_{+}>0$, and the scale function $\widetilde{S}(x) = x$. The fundamental solutions associated with oscillating Brownian motion are (cf.~\cite{Mordecki}) 
\begin{equation*}
\widetilde{\psi}_\lambda(x) = 
\begin{cases}
\cosh \Big(\frac{x\sqrt{2\lambda}}{\sigma_+}\Big) + \frac{\sigma_+}{\sigma_-} \sinh \Big(\frac{x\sqrt{2\lambda}}{\sigma_+}\Big), & x\geq 0, \\
\exp\Big(\frac{x\sqrt{2\lambda}}{\sigma_-}\Big), & x\leq 0,
\end{cases}
\end{equation*}
and 
\begin{equation*}
\widetilde{\phi}_\lambda(x) = 
\begin{cases}
\exp\Big(-\frac{x\sqrt{2\lambda}}{\sigma_+}\Big), & x\geq 0, \\
\cosh \Big(\frac{x\sqrt{2\lambda}}{\sigma_+}\Big) - \frac{\sigma_+}{\sigma_-} \sinh \Big(\frac{x\sqrt{2\lambda}}{\sigma_+}\Big), & x\leq 0,
\end{cases}
\end{equation*}
and the corresponding Wronskian is given by $\widetilde{w}_\lambda = \sqrt{2\lambda}\big( \frac{1}{\sigma_+} + \frac{1}{\sigma_-} \big)$. This gives that
\begin{equation}
\label{eq_oscG}
\widetilde{G}_\lambda(0,y) = \frac{\sigma_+ \sigma_-}{(\sigma_+ + \sigma_-)\sqrt{2\lambda}} \exp\biggl(-\frac{y\sqrt{2\lambda}}{\sigma_+}\biggr), \quad y\geq 0.
\end{equation}
It can be checked that the oscillating Brownian motion has the scaling property. In fact, it holds that 
\begin{equation}
\label{oscscaling}
 (\widetilde{X}_t)_{t\geq 0} \overset{(d)}{=}(S(X_t))_{t\geq 0},
\end{equation}
where $(X_t)_{t\geq 0}$ denotes the skew Brownian motion with $\beta=\sigma_-/(\sigma_++\sigma_-)$ and $S$ is its scale function. 

\end{example}

\begin{example}\label{ex_sticky}
\emph{Sticky Brownian motion} is a diffusion which behaves like a standard Brownian motion when on excursions away from a certain given point, which we take to be 0. The crucial property which distinguishes sticky Brownian motion from standard Brownian motion is that the occupation time at~0 for sticky Brownian motion is positive up to any given time $t>0$ a.s. if the process starts from~0, i.e.,
$$
\Leb\{ s\in[0,t]\, :\, X_t=0\}>0 \quad \text{$\PP_0\ $-a.s.},
$$
where $(X_t)_{t\geq 0}$ denotes the sticky Brownian motion. However, a sticky Brownian motion does not stay any time interval at~0 (such a behavior would violate the strong Markov property). 
The scale function and the speed measure are 
\begin{align*}
&S(x)=x,\quad m(\diff x) = 2 \diff x +2\gamma \varepsilon_{\{0\}}(dx),
\end{align*}
respectively, where $\gamma>0$ and $\varepsilon_{\{0\}}$ denotes the Dirac measure at~0. The fundamental solutions are (see~\cite[p.~127]{handbook} and references therein)
\begin{equation*}
\psi_\lambda(x) = 
\begin{cases}
\ee^{x\sqrt{2\lambda}} + \gamma\sqrt{2\lambda} \sinh (x\sqrt{2\lambda}), & x\geq 0, \\
\ee^{x\sqrt{2\lambda}}, & x\leq 0,
\end{cases}
\end{equation*}
and 
\begin{equation*}
\phi_\lambda(x) = 
\begin{cases}
\ee^{-x\sqrt{2\lambda}}, & x\geq 0, \\
\ee^{-x\sqrt{2\lambda}} - \gamma\sqrt{2\lambda} \sinh (x\sqrt{2\lambda}), & x\leq 0.
\end{cases}
\end{equation*}
with Wronskian $w_\lambda = 2\sqrt{2\lambda} + 2\lambda\gamma$. In particular, this gives that
\begin{equation}
\label{green0}
G_\lambda(0,y) = \frac{1}{2 \sqrt{2\lambda}+2\lambda\gamma} \ee^{-y\sqrt{2\lambda}}, \quad y\geq 0.
\end{equation}

\end{example}

\section{Kac's moment formula}
\label{section_Kac}

In this section we recall the classical moment formula for integral functionals due to Kac~\cite{Kac}. See~\cite{Fitzsimmons} for formulas for additive functionals in a framework of a general strong Markov process, and also for further references. Our aim here is to present the formula in a form directly applicable to the case at hand. 

Let~$X$ be a regular diffusion taking values on an interval $I\subseteq\RR$, as defined above. For a measurable and bounded function $V$ define for~$t>0$
\begin{equation*}
A_t(V) := \int_0^t V(X_s) \diff s.
\end{equation*}

\begin{proposition}[Kac's moment formula]
For $t>0$, $x\in I$ and $n=1,2,\dotsc$,
\begin{equation}\label{eq_Kac_moment}
\EE_x\bigl((A_t(V))^n\bigr) = n \int_I m(\diff y) \int_0^t p(s;x,y) V(y) \EE_y\bigl((A_{t-s}(V))^{n-1}\bigr) \diff s.
\end{equation}
\end{proposition}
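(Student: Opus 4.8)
The plan is to reduce everything to a single symmetrization identity followed by one application of the Markov property. First I would rewrite the $n$th power as an integral over the cube $[0,t]^n$ and then single out the smallest of the $n$ time variables. Writing $A_t(V)^n = \int_{[0,t]^n} \prod_{i=1}^n V(X_{s_i})\,\diff s_1\cdots \diff s_n$ and using that the integrand is symmetric under permutations of the $s_i$, the region where a fixed coordinate is the minimum contributes an equal share, so (discarding the null set where two times coincide)
\begin{equation*}
A_t(V)^n = n \int_0^t V(X_s) \left(\int_s^t V(X_u)\,\diff u\right)^{n-1} \diff s .
\end{equation*}

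Next I would take $\EE_x$ on both sides and move the expectation inside the outer $\diff s$-integral by Fubini's theorem, which is legitimate because $V$ is bounded and $t<\infty$, so all the quantities involved are integrable. This leaves me to evaluate, for each fixed $s\in[0,t]$, the quantity
\begin{equation*}
\EE_x\left[V(X_s)\left(\int_s^t V(X_u)\,\diff u\right)^{n-1}\right].
\end{equation*}
The key step is to condition on $\mathcal{F}_s$ and apply the Markov property. Observe that $\int_s^t V(X_u)\,\diff u = \int_0^{t-s} V(X_{s+r})\,\diff r$, which is the functional $A_{t-s}(V)$ evaluated along the shifted process $(X_{s+r})_{r\geq 0}$. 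Since $V(X_s)$ is $\mathcal{F}_s$-measurable, conditioning on $X_s=y$ and using that $(X_{s+r})_{r\geq 0}$ is, under $\PP_x(\,\cdot\mid X_s=y)$, a copy of $X$ started from $y$, gives
\begin{equation*}
\EE_x\left[V(X_s)\left(\int_s^t V(X_u)\,\diff u\right)^{n-1}\right] = \int_I p(s;x,y)\,V(y)\,\EE_y\bigl((A_{t-s}(V))^{n-1}\bigr)\,m(\diff y),
\end{equation*}
where I have written the $\PP_x$-distribution of $X_s$ as $p(s;x,y)\,m(\diff y)$. Substituting this back and interchanging the order of the $\diff s$- and $m(\diff y)$-integrals (again by Fubini) yields exactly the claimed identity~\eqref{eq_Kac_moment}.

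I expect the only real obstacle to be a careful handling of the Markov-property step: one must check that the map $(s,y)\mapsto \EE_y\bigl((A_{t-s}(V))^{n-1}\bigr)$ is jointly measurable, so that the conditional expectation may be integrated against the law of $X_s$, and that the Fubini interchanges are valid. Both are routine given the boundedness of $V$ and finiteness of $t$: since $A_{t-s}(V)$ is bounded by $\lVert V\rVert_\infty\, t$, all the moments are finite and uniformly bounded, and the required measurability follows from the standard regularity of the transition density $p$. An alternative would be to argue by induction on $n$, peeling off one factor at a time through the same conditioning argument; but the symmetrization route seems cleanest, since it produces the combinatorial prefactor $n$ directly.
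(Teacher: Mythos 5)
Your proof is correct and follows essentially the same route as the paper: expand the $n$th power over the cube $[0,t]^n$, use symmetry of the integrand to single out the smallest time variable, and apply the Markov property at that time to produce the factor $\EE_y\bigl((A_{t-s}(V))^{n-1}\bigr)$. The only cosmetic difference is that you symmetrize just over which coordinate is minimal (yielding the prefactor $n$ directly and avoiding explicit induction), whereas the paper fully orders all $n$ variables to get $n!$ and then reassembles the inner ordered integral as $(A_{t-s}(V))^{n-1}/(n-1)!$; the substance is identical.
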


\begin{proof}
The formula clearly holds for~$n=1$. For $n\geq 2$ consider
\begin{align*}
\EE_x\bigl((A_t(V))^n\bigr) &= \EE_x \biggl( \Bigl( \int_0^t V(X_s) \diff s \Bigr)^n \biggr) \\
&= \EE_x \biggl( \int_0^t \dotsi \int_0^t V(X_{s_1}) \cdot\dotsc\cdot V(X_{s_n}) \diff s_1 \cdot\dotsc\cdot \diff s_n \biggr) \\
&= n! \EE_x \biggl( \int_0^t \diff s_1 \int_{s_1}^t \diff s_2 \, \dotsi \int_{s_{n-1}}^t \diff s_n V(X_{s_1}) \cdot\dotsc\cdot V(X_{s_n}) \biggr),
\end{align*}
where the last step holds due to the symmetry of the function
\begin{equation*}
(s_1,\dotsc,s_n) \to V(X_{s_1}) \cdot\dotsc\cdot V(X_{s_n}).
\end{equation*}
Consequently, using the Markov property and the induction assumption,
\begin{align*}
\EE_x\bigl((A_t(V))^n\bigr) &= n! \int_0^t \diff s_1 \int_I m(\diff y) p(s;x,y)V(y) \\
&\qquad \cdot \EE_y \biggl( \int_0^{t-s_1} \diff s_2 
\, \dotsi \int_{s_{n-1}}^{t-s_1} \diff s_n V(X_{s_2}) \cdot\dotsc\cdot V(X_{s_n}) \biggr) \\
&= n \int_0^t \diff s_1 \int_I m(\diff y) p(s;x,y)V(y) \EE_y\bigl((A_{t-s_1}(V))^{n-1}\bigr),
\end{align*} 
which proves the claim.
\hfill\end{proof}

\section{Moment generating function}
\label{MOM}

In this section, as in the previous one, it is assumed that $X$ is a regular diffusion taking values in the interval $I\subseteq\RR$, as introduced in Section~\ref{section_prelim}. Recall that $A_t$ is the occupation time on $\RR_+$ up to time $t$, as defined in~\eqref{def_At}. 
Let $T\sim \operatorname{Exp}(\lambda)$ be an exponentially distributed random variable independent of~$X$. 
We here derive an expression for the moment generating function of~$A_T$, which always exists, since $A_t\leq t$ for all~$t$. 

\begin{theorem}\label{thm_ExerAT}
Let $I^+:=I \cap [0,\infty)$. For $x>0$,
\begin{equation}\label{eq_thmExerAT}
\EE_x(\ee^{-rA_T}) = \frac{\lambda}{\lambda+r} - \left( \frac{\lambda}{\lambda+r} - \EE_0(\ee^{-rA_T}) \right) \EE_x(\ee^{-(\lambda+r)H_0}),
\end{equation}
and
\begin{equation}\label{eq_thmE0}
\EE_0(\ee^{-rA_T}) =\frac{1}{\lambda+r}\left(\lambda+ r\, \frac{\displaystyle 1-\lambda \int_{I^+} G_\lambda (0,y) \,m(\diff y)}{\displaystyle 1+r \int_{I^+} G_\lambda (0,y) \widehat{f}(y;\lambda+r) \,m(\diff y)}\right).
\end{equation}
\end{theorem}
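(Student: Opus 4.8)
The plan is to determine $U(x):=\EE_x(\ee^{-rA_T})$, $x\in I^+$, by combining two relations: a renewal-type identity coming from the strong Markov property at $H_0$, which gives \eqref{eq_thmExerAT} and expresses $U(x)$ for $x>0$ through $U(0)$; and an integral equation coming from Kac's moment formula, which, once evaluated at $x=0$, closes the system and pins down $U(0)$.

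For \eqref{eq_thmExerAT}, fix $x>0$. Since the paths are continuous and must hit $0$ to change sign, $X_s>0$ for $s<H_0$, so $A_s=s$ for $s\le H_0$; in particular $A_{H_0}=H_0$. I would split $\EE_x(\ee^{-rA_T})$ over $\{T<H_0\}$ and $\{T\ge H_0\}$ using the density $\lambda\ee^{-\lambda t}$ of $T$. On the first event $A_T=T$, which integrates to $\tfrac{\lambda}{\lambda+r}\bigl(1-\widehat{f}(x;\lambda+r)\bigr)$. On the second event, writing $A_t=H_0+\int_0^{t-H_0}\idop_{[0,\infty)}(X_{H_0+u})\,\diff u$ for $t\ge H_0$ and substituting $v=t-H_0$ peels off a factor $\ee^{-(\lambda+r)H_0}$; by the strong Markov property at $H_0$ and the lack of memory of $T$, the remaining integral contributes $\EE_0(\ee^{-rA_T})$, so the second event yields $\widehat{f}(x;\lambda+r)\,\EE_0(\ee^{-rA_T})$. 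Adding the two and recalling $\widehat{f}(x;\lambda+r)=\EE_x(\ee^{-(\lambda+r)H_0})$ gives \eqref{eq_thmExerAT}.

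For the integral equation, set $L_n(x):=\int_0^\infty\ee^{-\lambda t}\EE_x(A_t^n)\,\diff t$, so $L_0\equiv1/\lambda$ and $U(x)=\lambda\sum_{n\ge0}\frac{(-r)^n}{n!}L_n(x)$. Applying Kac's moment formula with $V=\idop_{[0,\infty)}$ (which confines the $y$-integral to $I^+$) and Laplace transforming in $t$, the inner time integral is a convolution, so the factor $\int_0^\infty\ee^{-\lambda s}p(s;x,y)\,\diff s=G_\lambda(x,y)$ splits off and
\begin{equation*}
L_n(x)=n\int_{I^+}G_\lambda(x,y)\,L_{n-1}(y)\,m(\diff y).
\end{equation*}
Multiplying by $(-r)^n/n!$ and summing over $n\ge1$ gives the integral equation
\begin{equation}\label{eq_Ueq}
U(x)=1-r\int_{I^+}G_\lambda(x,y)\,U(y)\,m(\diff y),\qquad x\in I^+ .
\end{equation}
The last step is to put $x=0$ in \eqref{eq_Ueq}, insert \eqref{eq_thmExerAT} for $U(y)$ (which also returns $U(0)$ at $y=0$, since $\widehat{f}(0;\lambda+r)=1$, so it remains valid even when $m$ charges $0$), abbreviate $P=\int_{I^+}G_\lambda(0,y)\,m(\diff y)$ and $Q=\int_{I^+}G_\lambda(0,y)\widehat{f}(y;\lambda+r)\,m(\diff y)$, and solve the linear equation $U(0)(1+rQ)=1+\tfrac{r\lambda}{\lambda+r}(Q-P)$; rearranging reproduces \eqref{eq_thmE0}.

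I expect the main obstacle to be the analytic justification of passing from the moment recursion to \eqref{eq_Ueq}, namely interchanging the summation with the $m$-integration and the Laplace transform. The bound $A_t\le t$ gives $\EE_x(A_t^n)\le t^n$, hence $L_n(x)\le n!/\lambda^{n+1}$, so $\sum_n\frac{|r|^n}{n!}L_n(x)\le\lambda^{-1}\sum_n(|r|/\lambda)^n<\infty$ for $|r|<\lambda$, which legitimizes Fubini there; \eqref{eq_Ueq}, and hence \eqref{eq_thmE0}, then extends to all admissible $r$ by analyticity in $r$.
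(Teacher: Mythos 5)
Your proposal is correct and follows essentially the same route as the paper: the same strong Markov/memorylessness decomposition at $H_0$ for \eqref{eq_thmExerAT}, the same resolvent-type equation $\EE_x(\ee^{-rA_T})=1-r\int_{I^+}G_\lambda(x,y)\,\EE_y(\ee^{-rA_T})\,m(\diff y)$ derived from Kac's moment formula, and the same linear solve after inserting \eqref{eq_thmExerAT} at $x=0$. The only cosmetic difference is the order of operations: you Laplace-transform the moment recursion first and then sum the exponential series, which forces the restriction $|r|<\lambda$ and your analytic-continuation step, whereas the paper sums the series at fixed $t$ first (where domination by $\ee^{|r|t}$ justifies the interchange for all $r$) and then transforms, avoiding the continuation argument.
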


\begin{proof}
Equation~\eqref{eq_thmExerAT} follows from 
\begin{align*}
\EE_x(\ee^{-rA_T} ; H_0>T) &= \EE_x(\ee^{-rT} ; H_0>T) \nonumber\\
&= \frac{\lambda}{\lambda+r} \left( 1 - \EE_x(\ee^{-(\lambda+r)H_0}) \right), 
\end{align*}
together with 
\begin{align*}
\EE_x(\ee^{-rA_T} ; H_0<T) &= \EE_0(\ee^{-rA_T}) \EE_x(\ee^{-rH_0} ; H_0<T) \nonumber\\
&= \EE_0(\ee^{-rA_T})\EE_x(\ee^{-(\lambda+r)H_0}). 
\end{align*}
From Kac's moment formula~\eqref{eq_Kac_moment} with $V(x)=\idop_{[0,\infty)}(x)$ it follows that
\begin{align*}
\EE_x(\ee^{-rA_t}) &= \sum_{n=0}^\infty \EE_x(A_t^n) \frac{(-r)^n}{n!} \\
&= 1 + \sum_{n=1}^\infty (-r) \frac{(-r)^{n-1}}{(n-1)!} \int_{I^+} m(\diff y) \int_0^t p(s;x,y) \EE_y(A_{t-s}^{n-1}) \diff s\\
&= 1 - r \int_{I^+} m(\diff y) \int_0^t p(s;x,y) \EE_y(\ee^{-rA_{t-s}}) \diff s.
\end{align*}
From this we obtain the formula
\begin{align}\label{eq_ExerAT}
\EE_x(\ee^{-rA_T}) &= \int_0^\infty \EE_x(\ee^{-rA_t})\lambda \ee^{-\lambda t} \diff t \nonumber\\
&= 1 - r\lambda \int_{I^+} \LL \left\{ \int_0^t p(s;x,y) \EE_y(\ee^{-rA_{t-s}}) \diff s \right\} m(\diff y) \nonumber\\
&= 1 - r \int_{I^+} G_\lambda(x,y) \LL\{ \lambda \EE_y(\ee^{-rA_t}) \} m(\diff y) \nonumber\\
&= 1 - r \int_{I^+} G_\lambda(x,y) \EE_y(\ee^{-rA_T}) m(\diff y),
\end{align} 
using the properties for the Laplace transform of a convolution. Inserting~\eqref{eq_thmExerAT} into the right hand side of~\eqref{eq_ExerAT} and putting $x=0$ we can solve the resulting expression for~$\EE_0(\ee^{-rA_T})$, which gives
\begin{align*}
\EE_0(\ee^{-rA_T}) &= \frac{1-\frac{r\lambda}{\lambda+r} \int_{I^+} G_\lambda (0,y) (1-\EE_y(\ee^{-(\lambda+r)H_0})) \,m(\diff y)}{1+r \int_{I^+} G_\lambda (0,y) \EE_y(\ee^{-(\lambda+r)H_0}) \,m(\diff y)} \\
&= \frac{\lambda}{\lambda+r}+ \frac{r}{\lambda+r} \cdot \frac{1-\lambda \int_{I^+} G_\lambda (0,y) \,m(\diff y)}{1+r \int_{I^+} G_\lambda (0,y) \widehat{f}(y;\lambda+r) \,m(\diff y)},
\end{align*}
proving the result. 
\hfill\end{proof}

In the next corollary we connect formula~\eqref{eq_thmE0} with the result in~\cite[Cor.~2]{Watanabe}, which is a special case of~\cite[Eq.~(68)]{PY} and also corresponds to~\cite[Eq.~(110)]{Truman}. Here we consider the occupation times on both $\RR_+$ and $\RR_-:=(-\infty,0)$:
\begin{equation*}
A_t^+ := \int_0^t \idop_{[0,\infty)}(X_s)\,\diff s \quad \textrm{and} \quad A_t^- := \int_0^t \idop_{(-\infty,0)}(X_s)\,\diff s,
\end{equation*}
respectively. Formula~\eqref{eq_EApAm} below coincides, when multiplied by $\lambda^{-1}$, with~\cite[Eq.~(68)]{PY} (without the local time term).

\begin{corollary}\label{cor_EApAm}
For $r,q\geq 0$,
\begin{equation}\label{eq_EApAm}
\EE_0(\ee^{-rA_T^{+} - qA_T^{-}}) = \frac{\frac{\lambda}{\lambda+q} \phi_{\lambda+r}(0)\psi_{\lambda+q}^- (0) - \frac{\lambda}{\lambda+r} \psi_{\lambda+q}(0)\phi_{\lambda+r}^- (0)}{\phi_{\lambda+r}(0)\psi_{\lambda+q}^- (0) - \psi_{\lambda+q}(0)\phi_{\lambda+r}^- (0)},
\end{equation}
where the superscript~$^-$ denotes the left derivative with respect to the scale function~$S$.
\end{corollary}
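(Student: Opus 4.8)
\section*{Proof proposal}

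The plan is to reduce the two--parameter transform to the one--parameter transform already computed in Theorem~\ref{thm_ExerAT}, and then to evaluate explicitly the two Green--kernel integrals appearing in~\eqref{eq_thmE0}. First I would exploit the identity $A_t^+ + A_t^- = t$, valid because $\idop_{[0,\infty)}$ and $\idop_{(-\infty,0)}$ partition the line. Writing $A_t^- = t - A_t^+$ gives $-rA_t^+ - qA_t^- = -(r-q)A_t^+ - qt$, so conditioning on $T=t$ and integrating against the $\operatorname{Exp}(\lambda)$ density yields
\begin{equation*}
\EE_0(\ee^{-rA_T^+ - qA_T^-}) = \lambda\int_0^\infty \ee^{-(\lambda+q)t}\,\EE_0(\ee^{-(r-q)A_t^+})\,\diff t = \frac{\lambda}{\lambda+q}\,\EE_0(\ee^{-(r-q)A_{T'}^+}),
\end{equation*}
where $T'\sim\operatorname{Exp}(\lambda+q)$. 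Thus the left--hand side of~\eqref{eq_EApAm} is $\frac{\lambda}{\lambda+q}$ times the transform of Theorem~\ref{thm_ExerAT} under the substitution $\lambda\mapsto\lambda+q$, $r\mapsto r-q$; note that this substitution preserves the rate $\lambda+r$ appearing in $\widehat{f}$, which is what makes the final formula symmetric. One should remark that~\eqref{eq_thmE0} persists for the (possibly negative) argument $r-q$, since $0\le A_t^+\le t$ ensures convergence of all the relevant series and Laplace transforms as long as $\lambda+r>0$.

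Next, with $\mu:=\lambda+q$ and $\kappa:=\lambda+r$, I would evaluate $\int_{I^+}G_\mu(0,y)\,m(\diff y)$ and $\int_{I^+}G_\mu(0,y)\widehat{f}(y;\kappa)\,m(\diff y)$. By~\eqref{eq_greenkernel}, $G_\mu(0,y)=w_\mu^{-1}\psi_\mu(0)\phi_\mu(y)$ for $y\ge0$, and by~\eqref{eq_Lhitting}, $\widehat{f}(y;\kappa)=\phi_\kappa(y)/\phi_\kappa(0)$, so these reduce to $\int_{[0,\infty)}\phi_\mu\,\diff m$ and $\int_{[0,\infty)}\phi_\mu\phi_\kappa\,\diff m$. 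Both follow from the generalized ODE $\diff(\diff\phi_\mu/\diff S)=\mu\phi_\mu\,\diff m$: integrating it over $(0,\infty)$ gives $\int_{(0,\infty)}\phi_\mu\,\diff m=-\phi_\mu^+(0)/\mu$ (the scale--flux of $\phi_\mu$ vanishing at the inaccessible right endpoint, and finiteness being clear from $\int_0^\infty\ee^{-\mu t}\PP_0(X_t\ge0)\,\diff t\le1/\mu$), while the Lagrange identity $\diff(\phi_\kappa\,\diff\phi_\mu/\diff S - \phi_\mu\,\diff\phi_\kappa/\diff S)=(\mu-\kappa)\phi_\mu\phi_\kappa\,\diff m$ gives $\int_{(0,\infty)}\phi_\mu\phi_\kappa\,\diff m=[\phi_\kappa(0)\phi_\mu^+(0)-\phi_\mu(0)\phi_\kappa^+(0)]/(\kappa-\mu)$.

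The delicate step, and the one I expect to be the main obstacle, is the contribution of a possible atom of $m$ at $0$ (as for the sticky Brownian motion of Example~\ref{ex_sticky}), because~\eqref{eq_EApAm} involves \emph{left} scale derivatives whereas the computations above produce \emph{right} ones. The jump relation $\phi_\mu^+(0)-\phi_\mu^-(0)=\mu\phi_\mu(0)\,m(\{0\})$ (and likewise for $\phi_\kappa$), read off from the atom of $\diff(\diff\phi_\mu/\diff S)=\mu\phi_\mu\,\diff m$ at $0$, together with the atom's own contribution $\phi_\mu(0)\,m(\{0\})$, respectively $\phi_\mu(0)\phi_\kappa(0)\,m(\{0\})$, to $\int_{[0,\infty)}$, cancel exactly and convert the $^+$ derivatives into $^-$ derivatives, giving $\int_{[0,\infty)}\phi_\mu\,\diff m=-\phi_\mu^-(0)/\mu$ and $\int_{[0,\infty)}\phi_\mu\phi_\kappa\,\diff m=[\phi_\kappa(0)\phi_\mu^-(0)-\phi_\mu(0)\phi_\kappa^-(0)]/(\kappa-\mu)$.

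Finally I would substitute these into the numerator $1-\mu\int_{I^+}G_\mu\,\diff m$ and denominator $1+(r-q)\int_{I^+}G_\mu\widehat{f}\,\diff m$ of the transformed~\eqref{eq_thmE0}. Simplifying with the Wronskian identity~\eqref{eq_wronskian} in its left--derivative form $w_\mu=\psi_\mu^-(0)\phi_\mu(0)-\psi_\mu(0)\phi_\mu^-(0)$ collapses the numerator to $\psi_\mu^-(0)\phi_\mu(0)/w_\mu$ and the denominator to $\phi_\mu(0)[\phi_\kappa(0)\psi_\mu^-(0)-\psi_\mu(0)\phi_\kappa^-(0)]/(w_\mu\phi_\kappa(0))$. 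Multiplying by $\frac{\lambda}{\lambda+q}$, collecting the two resulting terms over the common denominator $\phi_\kappa(0)\psi_\mu^-(0)-\psi_\mu(0)\phi_\kappa^-(0)$, and using $(\lambda+q)+(r-q)=\lambda+r$ to recombine the coefficients should yield exactly~\eqref{eq_EApAm}.
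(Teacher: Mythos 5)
Your proposal is correct and is essentially the paper's own argument with the steps permuted: the paper first proves \eqref{eq_EApAm} for $q=0$ by evaluating the two integrals in \eqref{eq_thmE0} and only afterwards applies the reduction $A_T^{+}+A_T^{-}=T$, $\EE_0(\ee^{-pA_T^{+}-qT})=\frac{\lambda}{\lambda+q}\EE_0(\ee^{-pA_{\widehat{T}}^{+}})$ with $\widehat{T}\sim\operatorname{Exp}(\lambda+q)$, whereas you perform that reduction at the outset and then evaluate the integrals at the shifted rate $\mu=\lambda+q$; the algebra with the Wronskian is identical. The one place where your bookkeeping genuinely differs is the sticky point: the paper works with Lebesgue--Stieltjes integrals over half-open intervals, for which $\int_{[a,b)}\lambda\phi_\lambda\,\diff m=\phi_\lambda^-(b)-\phi_\lambda^-(a)$, so the left scale derivatives (atom of $m$ at $0$ included) emerge automatically, while you integrate the ODE and the Lagrange identity over $(0,\infty)$, obtain right derivatives, and then restore the atom by hand via the jump relation $\phi_\mu^+(0)-\phi_\mu^-(0)=\mu\,\phi_\mu(0)\,m(\{0\})$ --- the cancellation you exhibit is exactly the content of the paper's convention, and it has the small merit of making explicit \emph{why} left rather than right derivatives must appear in \eqref{eq_EApAm}. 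You are also slightly more careful than the paper in noting that \eqref{eq_thmE0} must be invoked at the possibly negative argument $r-q$ (legitimate since $A_t^+\leq t$ and $\lambda+r>0$), a point the paper's proof passes over silently.
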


\begin{remark}
If the point 0 is included in $A_t^-$ instead, rather than in $A_t^+$, then the left derivatives in~\eqref{eq_EApAm} should be replaced by right derivatives. Note, however, that there is a difference only if~0 is a sticky point, since otherwise the left and right scale derivatives are equal. 
\end{remark}

\begin{proof}[Proof of Corollary~\ref{cor_EApAm}]
We first prove~\eqref{eq_EApAm} when $q=0$. Since for~$y>0$ we have that
\begin{equation*}
\widehat{f}(y;\lambda+r) = \EE_y(\ee^{-(\lambda+r)H_0}) = \frac{\phi_{\lambda+r}(y)}{\phi_{\lambda+r}(0)}, 
\end{equation*}
we can rewrite~\eqref{eq_thmE0} as
\begin{align}
\EE_0(\ee^{-rA_T}) &= \frac{1}{\lambda+r} \left( \lambda + r\, \frac{w_\lambda \phi_{\lambda+r}(0)-\psi_\lambda(0) \phi_{\lambda+r}(0) \int_{I^+} \lambda\phi_\lambda(y) \,m(\diff y)}{w_\lambda \phi_{\lambda+r}(0)+r\psi_\lambda(0) \int_{I^+} \phi_\lambda(y) \phi_{\lambda+r}(y) \,m(\diff y)} \right). \label{eq_thmE2}
\end{align}
For the integral in the numerator it holds that
\begin{equation}
\label{ekv10}
\intco{a}{b} \lambda\phi_\lambda(y) \,m(\diff y) = \intco{a}{b} \frac{\diff}{\diff m}\frac{\diff}{\diff S}\phi_\lambda(y) \,m(\diff y) = \phi_\lambda^- (b)-\phi_\lambda^- (a). 
\end{equation}
Recall the following integration by parts formula for a Lebesgue--Stieltjes integral, with functions $U$ and $V$ being of finite variation and at least one of them continuous on $(a,b)$:
\begin{equation}
\label{ekv11}
\intco{a}{b} U \diff V + \intco{a}{b} V \diff U = U(b-)V(b-) - U(a-)V(a-).
\end{equation}
Since $\phi$ is continuous and of finite variation, we get, applying~\eqref{ekv10} and~\eqref{ekv11}, that
\begin{align}
&\lambda \intco{a}{b} \phi_\lambda(y) \phi_{\lambda+r}(y) \,m(\diff y) 
= \intco{a}{b} \phi_{\lambda+r}(y) \cdot \lambda\phi_{\lambda}(y) \,m(\diff y) \nonumber\\
&\quad= \phi_{\lambda+r}(b) \phi_{\lambda}^- (b) - \phi_{\lambda+r}(a) \phi_{\lambda}^- (a) -\intco{a}{b}\left( \frac{\diff}{\diff S}\phi_{\lambda}(y) \right) \left( \frac{\diff}{\diff S}\phi_{\lambda+r}(y) \right) \diff S, \label{eq_I_lambda}
\end{align}
and likewise
\begin{align}
&(\lambda+r) \intco{a}{b} \phi_\lambda(y) \phi_{\lambda+r}(y) \,m(\diff y) 
= \intco{a}{b} \phi_{\lambda}(y) \cdot (\lambda+r)\phi_{\lambda+r}(y) \,m(\diff y) \nonumber\\
&\quad= \phi_{\lambda}(b) \phi_{\lambda+r}^- (b) -\phi_{\lambda}(a) \phi_{\lambda+r}^- (a) - \intco{a}{b}\left( \frac{\diff}{\diff S}\phi_{\lambda}(y) \right) \left( \frac{\diff}{\diff S}\phi_{\lambda+r}(y) \right) \diff S. \label{eq_I_lambda_r}
\end{align}
Subtracting~\eqref{eq_I_lambda} from~\eqref{eq_I_lambda_r} yields
\begin{multline*}
r \intco{a}{b} \phi_\lambda(y) \phi_{\lambda+r}(y) \,m(\diff y) = \phi_{\lambda}(b) \phi_{\lambda+r}^- (b) - \phi_{\lambda+r}(b) \phi_{\lambda}^- (b) \\ 
+ \phi_{\lambda+r}(a) \phi_{\lambda}^- (a) - \phi_{\lambda}(a) \phi_{\lambda+r}^- (a+).
\end{multline*}
Furthermore, if $b$ is an upper boundary point of the diffusion which is either non-exit or exit-and-entrance with reflection, then it holds that $\phi_{\lambda}^- (b) = 0$ (see~\cite[p.~130, Table~1]{Ito}).
Since the lower boundary point of~$I^+$ is 0 we thereby get that
\begin{align*}
\int_{I^+} \lambda\phi_\lambda(y) \,m(\diff y) &= -\phi_\lambda^- (0), \\
r \int_{I^+} \phi_\lambda(y) \phi_{\lambda+r}(y) \,m(\diff y) &= \phi_{\lambda+r}(0) \phi_{\lambda}^- (0) - \phi_{\lambda}(0) \phi_{\lambda+r}^- (0).
\end{align*}
Inserting this into~\eqref{eq_thmE2} yields
\begin{align}\label{eq_corE0}
\EE_0(\ee^{-rA_T^{+}}) &= \frac{\lambda}{\lambda+r} + \frac{\frac{r}{\lambda+r} ( w_\lambda \phi_{\lambda+r}(0) + \psi_\lambda(0) \phi_{\lambda+r}(0) \phi_\lambda^- (0) )}{w_\lambda \phi_{\lambda+r}(0) + \psi_\lambda(0) ( \phi_{\lambda+r}(0) \phi_{\lambda}^- (0) - \phi_{\lambda}(0) \phi_{\lambda+r}^- (0))} \nonumber\\
&= \frac{\phi_{\lambda+r}(0)\psi_{\lambda}^- (0) - \frac{\lambda}{\lambda+r} \psi_{\lambda}(0)\phi_{\lambda+r}^- (0)}{\phi_{\lambda+r}(0)\psi_{\lambda}^- (0) - \psi_{\lambda}(0)\phi_{\lambda+r}^- (0)},
\end{align}
where $w_\lambda = \psi^{-}_{\lambda}(0)\phi_\lambda (0)-\psi_\lambda (0)\phi^{-}_{\lambda}(0)$ has been inserted, see~\eqref{eq_wronskian}.
We now extend this result. Since $A_T^{+}+A_T^{-}=T$, 
\begin{equation*}
\EE_0(\ee^{-pA_T^{+}-qT}) = \EE_0(\ee^{-(p+q)A_T^{+}-qA_T^{-}}) = \EE_0(\ee^{-rA_T^{+}-qA_T^{-}}),
\end{equation*}
where $r=p+q$. On the other hand, 
\begin{align*}
\EE_0(\ee^{-pA_T^{+}-qT}) &= \int_0^\infty \lambda \ee^{-\lambda t} \EE_0(\ee^{-pA_T^{+}-qt}) \diff t \\
&= \int_0^\infty \lambda \ee^{-(\lambda+q)t} \EE_0(\ee^{-pA_T^{+}}) \diff t \\
&= \frac{\lambda}{\lambda+q} \EE_0(\ee^{-pA_{\widehat{T}}^{+}}), 
\end{align*}
where $\widehat{T}\sim \operatorname{Exp}(\lambda+q)$. Applying~\eqref{eq_corE0} gives
\begin{align*}
\EE_0(\ee^{-rA_T^{+} - qA_T^{-}}) &= \frac{\lambda}{\lambda+q} \EE_0(\ee^{-(r-q) A_{\widehat{T}}^{+}}) \\
&= \frac{\lambda}{\lambda+q} \cdot \frac{\phi_{\lambda+r}(0)\psi_{\lambda+q}^- (0) - \frac{\lambda+q}{\lambda+r} \psi_{\lambda+q}(0)\phi_{\lambda+r}^- (0)}{\phi_{\lambda+r}(0)\psi_{\lambda+q}^- (0) - \psi_{\lambda+q}(0)\phi_{\lambda+r}^- (0)} \\
&= \frac{\frac{\lambda}{\lambda+q}\phi_{\lambda+r}(0)\psi_{\lambda+q}^- (0) - \frac{\lambda}{\lambda+r} \psi_{\lambda+q}(0)\phi_{\lambda+r}^- (0)}{\phi_{\lambda+r}(0)\psi_{\lambda+q}^- (0) - \psi_{\lambda+q}(0)\phi_{\lambda+r}^- (0)},
\end{align*}
which proves~\eqref{eq_EApAm}. 
\hfill\end{proof}

\begin{remark}
If we instead consider the occupation times on the intervals $[\alpha,+\infty)$ and $(-\infty,\alpha)$ for some $\alpha\in\RR$, then~\eqref{eq_EApAm} holds when~0 is replaced with $\alpha$ everywhere. 
\end{remark}

\section{Recursive formula for the moments}\label{section_moments}

In this section we use Kac's moment formula to derive our main result, namely a recursive equation for the Laplace transforms of the moments of~$A_t$ for fixed $t>0$. When the diffusion is a self-similar process, the expression becomes a recursion for moments of~$A_1$ instead (which is easily transformed into a recursion for moments of~$A_t$, since in this case $A_t=tA_1$). It is assumed that $X$ is a regular diffusion taking values in the interval $I\subseteq\RR$ as defined in Section~\ref{section_prelim}. We introduce the Laplace transform of~$A_t$ via
\begin{equation*}
\widehat{A}_x(\lambda;n) := \LL_t\{\EE_x(A_t^n)\}(\lambda) := \int_0^\infty \ee^{-\lambda t} \EE_x(A_t^n) \diff t.
\end{equation*}
If there is no ambiguity, the variables $t$ and $\lambda$ in the notation of the Laplace transforms are omitted; for instance, we shall write $\LL\{\EE_x(A_t^n)\}$ instead of $\LL_t\{\EE_x(A_t^n)\}(\lambda)$. 

\begin{theorem}\label{thm_rec}
Let $I^+:=I \cap [0,\infty)$. The Laplace transforms of the moments of~$A_t$ for~$X$ starting from~0 are given for~$n=1$ by
\begin{equation}\label{eq_n1}
\widehat{A}_0(\lambda;1)= \frac{1}{\lambda} \int_{I^+} G_\lambda(0,y) m(\diff y),
\end{equation}
and for~$n=2,3,\dotsc$ by 
\begin{equation}\label{eq_Ahat_rec}
\widehat{A}_0(\lambda;n) = \frac{n!}{\lambda^{n-1}} \widehat{A}_0(\lambda;1) + \frac{n!}{\lambda^{n+1}}\sum_{k=1}^{n-1} \left( 1 - \frac{\lambda^{n-k+1}}{(n-k)!} \widehat{A}_0(\lambda;n-k) \right) D_{k}(\lambda), 
\end{equation}
where
\begin{equation}\label{eq_Ck}
D_k(\lambda) := \frac{(-\lambda)^k}{(k-1)!} \int_{I^+} G_\lambda(0,y) \widehat{f}_\lambda^{(k-1)}(y;\lambda) m(\diff y).
\end{equation}
Moreover, under the scaling property~\eqref{eq_scaling}, for all~$\lambda>0$,
\begin{equation}\label{eq_E0A1}
\EE_0(A_1) = \lambda\int_{I^+} G_\lambda(0,y) m(\diff y),
\end{equation}
and
\begin{equation}\label{eq_E0A1n}
\EE_0(A_1^n) = \EE_0(A_1) + \sum_{k=1}^{n-1} \left(1 - \EE_0(A_1^{n-k}) \right) D_k(\lambda).
\end{equation}
\end{theorem}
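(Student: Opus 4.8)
The plan is to iterate Kac's moment formula once, reduce the resulting ``interior'' Laplace transforms $\widehat A_y(\lambda;n-1)$ for $y>0$ back to quantities based at the origin via the strong Markov property at $H_0$, and then match the outcome term by term with \eqref{eq_Ahat_rec}. The scaling statements will follow by a direct substitution at the very end.

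First I would apply \eqref{eq_Kac_moment} with $V=\idop_{[0,\infty)}$, recognize the inner $\diff s$-integral as a convolution in $t$, and take Laplace transforms exactly as in the derivation of \eqref{eq_ExerAT}, obtaining the clean one-step recursion
\begin{equation*}
\widehat A_x(\lambda;n) = n\int_{I^+} G_\lambda(x,y)\,\widehat A_y(\lambda;n-1)\,m(\diff y),\qquad n\ge 1.
\end{equation*}
Putting $x=0$ and using $\EE_y(A_t^0)=1$, so that $\widehat A_y(\lambda;0)=1/\lambda$, immediately gives the case $n=1$, i.e.\ \eqref{eq_n1}.

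The heart of the argument is to express $\widehat A_y(\lambda;m)$ for $y>0$ through the base-point transforms $\widehat A_0(\lambda;\cdot)$ and the hitting-time data $\widehat f_\lambda^{(j)}(y;\lambda)$. Starting at $y>0$, continuity forces $X$ to remain in $[0,\infty)$ until $H_0$, so $A_t=(H_0\wedge t)+A'_{(t-H_0)^+}$, where $A'$ is the occupation functional of the post-$H_0$ process, which by the strong Markov property is an independent copy started at $0$. Expanding $A_t^m$ by the binomial theorem, conditioning on $H_0$, and using $\EE_y(H_0^{\,j}\,\ee^{-\lambda H_0})=(-1)^j\widehat f_\lambda^{(j)}(y;\lambda)$ together with $\LL_t\{\EE_0(A_t^{m-j})\}=\widehat A_0(\lambda;m-j)$, I would assemble a formula of the shape
\begin{equation*}
\widehat A_y(\lambda;m)=\frac{m!}{\lambda^{m+1}}+\sum_{j=0}^{m-1}(-1)^j\,\frac{m!}{j!}\,\widehat f_\lambda^{(j)}(y;\lambda)\left(\frac{\widehat A_0(\lambda;m-j)}{(m-j)!}-\frac{1}{\lambda^{\,m-j+1}}\right).
\end{equation*}
The one delicate point is the contribution of the event $\{H_0>t\}$, on which $A_t=t$: its Laplace transform is not a moment of $H_0$ but is obtained by differentiating $\LL_t\{\PP_y(H_0>t)\}=(1-\widehat f(y;\lambda))/\lambda$ in $\lambda$ and expanding by Leibniz' rule. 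Keeping the signs and the factorials $\tfrac{m!}{j!}$ straight through this merge is the main bookkeeping obstacle.

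Substituting this expression with $m=n-1$ into the recursion and setting $x=0$ then finishes the proof. The constant term yields $\tfrac{n!}{\lambda^{n-1}}\widehat A_0(\lambda;1)$ after inserting \eqref{eq_n1}, while the integrals $\int_{I^+}G_\lambda(0,y)\widehat f_\lambda^{(j)}(y;\lambda)\,m(\diff y)$ are precisely the quantities encoded in $D_{j+1}(\lambda)$; the reindexing $k=j+1$ and routine factorial algebra match the remainder with the sum in \eqref{eq_Ahat_rec}. At this stage I would invoke Lemma~\ref{lemma_f0} to verify that the displayed formula for $\widehat A_y(\lambda;m)$ collapses to $\widehat A_0(\lambda;m)$ as $y\downarrow 0$ (only the $j=0$ term survives, since $\widehat f_\lambda^{(j)}(0;\lambda)\to 0$ for $j\ge 1$), which legitimizes evaluating the integrand at $y=0$ when the speed measure charges the origin, as for sticky Brownian motion. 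Finally, under the scaling property \eqref{eq_scaling} one has $A_t\overset{(d)}{=}tA_1$, hence $\widehat A_0(\lambda;n)=\tfrac{n!}{\lambda^{n+1}}\EE_0(A_1^n)$; inserting this into \eqref{eq_n1} and \eqref{eq_Ahat_rec} and cancelling the common factor $n!/\lambda^{n+1}$ reduces them to \eqref{eq_E0A1} and \eqref{eq_E0A1n}.
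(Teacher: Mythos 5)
Your proposal is correct, and it shares the paper's two main ingredients: the Laplace-transformed Kac recursion $\widehat A_x(\lambda;n)=n\int_{I^+}G_\lambda(x,y)\widehat A_y(\lambda;n-1)\,m(\diff y)$ and the strong Markov expansion of $\widehat A_y(\lambda;m)$ in terms of $\widehat A_0(\lambda;\cdot)$ and $\widehat f_\lambda^{(j)}(y;\lambda)$ — your displayed formula is exactly the paper's intermediate identity, and the reindexing $k=j+1$ does close up to give \eqref{eq_Ahat_rec} (the constant term gives $\frac{n!}{\lambda^{n-1}}\widehat A_0(\lambda;1)$ and the signs from $(-\lambda)^{j+1}$ in \eqref{eq_Ck} work out). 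Where you genuinely differ is the final assembly. The paper keeps $x>0$, inserts the expansion into \emph{both} sides of the Kac identity (order $n$ at $x$ on the left, order $n-1$ under the integral on the right), solves for $\widehat A_0(\lambda;n)$ with $x$-dependent coefficients $D_k(x;\lambda)$, and only then lets $x\downarrow 0$; that limit is precisely where Lemma~\ref{lemma_f0} is needed in an essential way, and proving it is the whole purpose of the Appendix. You instead set $x=0$ from the outset and expand only the integrand, which lands on \eqref{eq_Ahat_rec} directly (term-by-term integration is legitimate: the sum is finite and each term is absolutely integrable, since $|\widehat f_\lambda^{(j)}(y;\lambda)|=\EE_y(H_0^j\ee^{-\lambda H_0})$ is bounded and $\int_{I^+}G_\lambda(0,y)\,m(\diff y)<\infty$). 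This is a real economy: in your route Lemma~\ref{lemma_f0} plays only a marginal role, namely when $m$ has an atom at the origin (the sticky case), and even there what is required is the \emph{value} rather than the limit — $\widehat f(0;\lambda)=1$ and $\widehat f_\lambda^{(j)}(0;\lambda)=(-1)^j\EE_0(H_0^j\ee^{-\lambda H_0})=0$ for $j\ge 1$, which is trivial because $H_0=0$ holds $\PP_0$-a.s. So your invocation of Lemma~\ref{lemma_f0} as a limit statement slightly misplaces where continuity matters (it is the paper's $x\downarrow 0$ argument, not yours, that needs it), but this is harmless. The scaling part of your argument coincides with the paper's.
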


\begin{remark}\label{rem_U}
Equation~\eqref{eq_Ahat_rec} can be rewritten as
\begin{equation*}
U_n(\lambda) = U_1(\lambda) + \sum_{k=1}^{n-1} \left(1 - U_{n-k}(\lambda)) \right) D_k(\lambda), 
\end{equation*}
where $U_n(\lambda):=\frac{\lambda^{n+1}}{n!} \widehat{A}_0(\lambda;n)$. 
\end{remark}

\begin{proof}[Proof of Theorem~\ref{thm_rec}]
Taking the Laplace transform on both sides of Kac's moment formula~\eqref{eq_Kac_moment} with $V(x)=\idop_{[0,\infty)}(x)$ yields 
\begin{equation} \label{eq_Kac}
\widehat{A}_x(\lambda;n) = n \int_{I^+} G_\lambda(x,y) \widehat{A}_y(\lambda;n-1) m(\diff y).
\end{equation}
Substituting $n=1$ and $x=0$ in~\eqref{eq_Kac} gives~\eqref{eq_n1}. To derive~\eqref{eq_Ahat_rec} we first find an expression for~$\widehat{A}_x(\lambda;n)$ in terms of $\widehat{A}_0(\lambda;k)$ for different~$k$. 
For any starting point $x>0$,
\begin{equation*}
A_t = 
\begin{cases}
H_0 + A_{t-H_0} \circ \theta_{H_0}, &H_0<t,\\
t, &H_0>t.
\end{cases}
\end{equation*}
where $\theta_t$ is the usual shift operator. By the strong Markov property, 
\begin{equation}\label{eq11}
\EE_x(A_t^n) = t^n(1-\PP_x(H_0<t)) + \sum_{k=0}^n \binom{n}{k} \EE_x(H_0^k (A_{t-H_0} \circ \theta_{H_0})^{n-k};H_0<t).
\end{equation}
We have the following Laplace transforms with respect to~$t$:
\begin{align*}
\LL\{\EE_x(H_0^k (A_{t-H_0} \circ \theta_{H_0})^{n-k};H_0<t)\} &= \LL\left\{ \int_0^t \EE_0(A_{t-s}^{n-k})s^k f(x;s) \diff s \right\} \\
&= \LL\{\EE_0(A_{t}^{n-k})\} \LL\{t^k f(x;t)\} \\
&= (-1)^k \widehat{A}_0(\lambda;n-k) \widehat{f}_\lambda^{(k)}(x;\lambda)
\end{align*}
and
\begin{align*}
\LL\{t^n \PP_x(H_0<t)\} &= \LL\left\{ t^n \int_0^t f(x;t) \diff t\right\} \\
&= (-1)^n \frac{\diff^n}{\diff\lambda^n} \left(\frac{1}{\lambda}\widehat{f}(x;\lambda)\right) \\
&= \frac{n!}{\lambda^{n+1}} \sum_{k=0}^n \frac{(-\lambda)^k}{k!} \widehat{f}_\lambda^{(k)}(x;\lambda).
\end{align*}
Hence, taking the Laplace transforms on the both sides of~\eqref{eq11} gives
\begin{align}
\widehat{A}_x(\lambda;n) &= \frac{n!}{\lambda^{n+1}} - \frac{n!}{\lambda^{n+1}} \sum_{k=0}^n \frac{(-\lambda)^k}{k!} \widehat{f}_\lambda^{(k)}(x;\lambda) \nonumber\\
&\hspace{9em} + \sum_{k=0}^n (-1)^k\binom{n}{k} \widehat{A}_0(\lambda;n-k) \widehat{f}_\lambda^{(k)}(x;\lambda) \nonumber\\
&= \frac{n!}{\lambda^{n+1}} + \frac{n!}{\lambda^{n+1}} \sum_{k=0}^{n-1} \frac{(-\lambda)^{k}}{k!} \widehat{f}_\lambda^{(k)}(x;\lambda) \left( \frac{\lambda^{n-k+1}}{(n-k)!}\widehat{A}_0(\lambda;n-k) - 1 \right). \label{eq_LXtn}
\end{align}
Note that in the summation the term with $k=n$ disappears, since $\widehat{A}_0(\lambda;0) = 1/\lambda$. 
Inserting the expression in~\eqref{eq_LXtn} into both sides of~\eqref{eq_Kac} and solving for~$\widehat{A}_0(\lambda;n)$ gives 
\begin{multline}\label{eq_Ahat_rec2}
\widehat{A}_0(\lambda;n) = \frac{n!}{\lambda^{n+1} \widehat{f}(x;\lambda)} \biggl( \lambda\int_{I^+} G_\lambda(x,y) m(\diff y) + \widehat{f}(x;\lambda) - 1 \\
+ \sum_{k=1}^{n-1} \left( 1 - \frac{\lambda^{n-k+1}}{(n-k)!}\widehat{A}_0(\lambda;n-k) \right) D_k(x;\lambda) \biggr),
\end{multline}
where
\begin{equation}\label{DKL}
D_k(x;\lambda) = \frac{(-\lambda)^k}{k! \widehat{f}(x;\lambda)} \left( \widehat{f}_\lambda^{(k)}(x;\lambda) + k\int_{I^+} G_\lambda(x,y) \widehat{f}_\lambda^{(k-1)}(y;\lambda) m(\diff y) \right).
\end{equation}
Note, however, that $x$ is not present on the left hand side of~\eqref{eq_Ahat_rec2}, so the right hand side cannot depend on $x$ either. Thus we may choose any $x>0$. We show now that the limit of the right hand side of~\eqref{eq_Ahat_rec2} exists when $x\downarrow 0$. Since
\begin{align*}
&\lim_{x\downarrow 0} \int_{I^+} G_\lambda(x,y) m(\diff y) \\
&\qquad= \lim_{x\downarrow 0} \biggl( \frac{\phi_\lambda(x)}{w_\lambda} \intco{0}{x} \psi_\lambda(y) m(\diff y) + \frac{\psi_\lambda(x)}{w_\lambda} \int\limits_{[x,\infty)\cap I^+} \hspace{-0.7em}\phi_\lambda(y) m(\diff y) \biggr) \\
&\qquad= \int_{I^+} G_\lambda(0,y) m(\diff y) \\
&\qquad= \lambda \widehat{A}_0(\lambda;1), 
\end{align*}
it is seen by induction that $\lim_{x\downarrow 0} D_{k}(x;\lambda)$ exists for all values of~$k.$ Consequently, recalling that $\lim_{x\downarrow 0}\widehat{f}(x;\lambda)=1$, we may write 
\begin{equation*}
\widehat{A}_0(\lambda;n) = \frac{n!}{\lambda^{n-1}} \widehat{A}_0(\lambda;1) + \frac{n!}{\lambda^{n+1}}\sum_{k=1}^{n-1} \left( 1 - \frac{\lambda^{n-k+1}}{(n-k)!} \widehat{A}_0(\lambda;n-k) \right) \lim_{x\downarrow 0} D_{k}(x;\lambda). 
\end{equation*}
We calculate the limit of~$D_{k}(x;\lambda)$ from the explicit expression~\eqref{DKL}. Using the result in Lemma~\ref{lemma_f0}, 
\begin{align*}
\lim_{x\downarrow 0} D_k(x;\lambda) &= \lim_{x\downarrow 0} \frac{(-\lambda)^k}{(k-1)!} \int_{I^+} G_\lambda(x,y) \widehat{f}_\lambda^{(k-1)}(y;\lambda) m(\diff y) \\
&= \frac{(-\lambda)^k}{(k-1)!} \lim_{x\downarrow 0} \biggl( \frac{\phi_\lambda(x)}{w_\lambda} \intco{0}{x} \psi_\lambda(y) \widehat{f}_\lambda^{(k-1)}(y;\lambda) m(\diff y) \\
& \hspace{8em} + \frac{\psi_\lambda(x)}{w_\lambda} \int\limits_{[x,\infty)\cap I^+} \hspace{-0.5em} \phi_\lambda(y) \widehat{f}_\lambda^{(k-1)}(y;\lambda) m(\diff y) \biggr) \\
&=\frac{(-\lambda)^k}{(k-1)!} \int_{I^+} G_\lambda(0,y) \widehat{f}_\lambda^{(k-1)}(y;\lambda) m(\diff y), 
\end{align*}
which is the right hand side of~\eqref{eq_Ck}. The proof of the recursive equation~\eqref{eq_Ahat_rec} is now complete.

Assume finally that the process~$X$ is self-similar so that~\eqref{scalingA} holds. Then $\EE_0(A_t^n) = t^n \EE_0(A_1^n)$ and thus 
\begin{equation}\label{eq_scAE}
\widehat{A}_0(\lambda;n) = \LL\{t^n\EE_0(A_1^n)\} = \frac{n!}{\lambda^{n+1}} \EE_0(A_1^n). 
\end{equation}
It is clear that in this case~\eqref{eq_E0A1} follows immediately from~\eqref{eq_n1}. 
After inserting~\eqref{eq_scAE} into~\eqref{eq_Ahat_rec} it is seen that $\lambda$ is only left in $D_k(\lambda),\, k=1,2,...,n-1$. Putting $n=2$ we find that $D_1(\lambda)$ does not depend on $\lambda$, and, by induction, we conclude that $D_k(\lambda)$ does not depend on $\lambda$ for any $k$.
\hfill\end{proof}

\section{Examples}\label{section_examples}
\subsection{Skew two-sided Bessel processes}

We now apply the results in Theorems~\ref{thm_ExerAT} and~\ref{thm_rec} on the skew two-sided Bessel process, which is described in Example~\ref{ex_bessel}. In this case the function $\widehat{f}$ is
\begin{equation}\label{eq_fbes}
\widehat{f}(x;\lambda) = \frac{\widehat{\phi}(x)}{\widehat{\phi}(0)} = \frac{2^{\nu+1}}{\Gamma(-\nu)} (x\sqrt{2\lambda})^{-\nu} K_\nu (x\sqrt{2\lambda}), 
\end{equation}
and the Green kernel is given in~\eqref{eq_Ghat}. Note that here $I^+ = [0,\infty)$. We derive the next result from~\eqref{eq_thmE0} in Theorem~\ref{thm_ExerAT}. Alternatively, formula~\eqref{eq_EApAm} in Corollary~\ref{cor_EApAm} could have been used. 

\begin{proposition}
Let $T\sim \textrm{Exp}(\lambda)$ independent of~$X$. For any $r\geq 0$,
\begin{equation}\label{eq_LTbessel}
\EE_0(\ee^{-rA_T}) = \frac{\beta \lambda^{\nu+1} + (1-\beta) (\lambda+r)^{\nu+1}}{\beta (\lambda+r) \lambda^{\nu} + (1-\beta) (\lambda+r)^{\nu+1}}.
\end{equation}
\end{proposition}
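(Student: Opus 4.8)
The plan is to specialize the general formula~\eqref{eq_thmE0} of Theorem~\ref{thm_ExerAT} to the skew two-sided Bessel process, for which $I^+=[0,\infty)$. This reduces the task to evaluating the two integrals appearing there: the integral $\int_{I^+}G_\lambda(0,y)\,m_\nu(\diff y)$ governing the numerator, and the integral $\int_{I^+}G_\lambda(0,y)\widehat f(y;\lambda+r)\,m_\nu(\diff y)$ governing the denominator. Once both are in closed form, the result follows by algebraic simplification.

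The first integral is, in effect, already available: the computation carried out in~\eqref{eq_besselG} shows that $\int_0^\infty G_\lambda(0,y)\,m_\nu(\diff y)=\beta/\lambda$, so that $1-\lambda\int_{I^+}G_\lambda(0,y)\,m_\nu(\diff y)=1-\beta$ and the numerator factor in~\eqref{eq_thmE0} collapses to $1-\beta$ with no further work.

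The substantive step, and what I expect to be the main obstacle, is the second integral. Writing $a:=\sqrt{2\lambda}$ and $b:=\sqrt{2(\lambda+r)}$ and inserting the explicit Green kernel from~\eqref{eq_Ghat}, the hitting-time transform from~\eqref{eq_fbes}, and the speed measure $m_\nu(\diff y)=4\beta y^{2\nu+1}\diff y$, the powers of $y$ outside the two Macdonald factors combine to a single power $y^{-\nu}\cdot y^{-\nu}\cdot y^{2\nu+1}=y$. Hence the integral reduces, up to an explicit constant, to $\int_0^\infty y\,K_\nu(ay)K_\nu(by)\,\diff y$. I would evaluate this using the tabulated value $\int_0^\infty x\,K_\nu(ax)K_\nu(bx)\,\diff x=\frac{\pi(ab)^{-\nu}(a^{2\nu}-b^{2\nu})}{2\sin(\pi\nu)(a^2-b^2)}$ (see, e.g., \cite[Eq.~(6.521.3)]{Gradshteyn}). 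Simplifying the Gamma prefactor through Euler's reflection formula $\Gamma(1+\nu)\Gamma(-\nu)=-\pi/\sin(\pi\nu)$, the factors $\pi$ and $\sin(\pi\nu)$ cancel; using $a^2-b^2=-2r$ and $b^{-2\nu}(a^{2\nu}-b^{2\nu})=(\lambda/(\lambda+r))^\nu-1$, I anticipate arriving at $\int_{I^+}G_\lambda(0,y)\widehat f(y;\lambda+r)\,m_\nu(\diff y)=\frac{\beta}{r}\bigl((\lambda/(\lambda+r))^\nu-1\bigr)$.

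Finally, I would substitute the two evaluations back into~\eqref{eq_thmE0}. The denominator becomes $1+r\cdot\frac{\beta}{r}\bigl((\lambda/(\lambda+r))^\nu-1\bigr)=1-\beta+\beta(\lambda/(\lambda+r))^\nu$, and what remains is routine algebra: multiplying numerator and denominator by $(\lambda+r)^\nu$ converts $\frac{1}{\lambda+r}\bigl(\lambda+\frac{r(1-\beta)}{1-\beta+\beta(\lambda/(\lambda+r))^\nu}\bigr)$ into the stated ratio~\eqref{eq_LTbessel}. The only delicate bookkeeping is the sign conventions in the Bessel integral and in the reflection formula---note $\sin(\pi\nu)<0$ for $\nu\in(-1,0)$---but since $r\ge 0$ enters only through $b$ and these signs cancel in pairs, the simplification is clean.
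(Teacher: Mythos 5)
Your proposal is correct, and its skeleton is the same as the paper's: specialize~\eqref{eq_thmE0} to the skew two-sided Bessel process, read off $\Delta_1=\int_{I^+}G_\lambda(0,y)\,m_\nu(\diff y)=\beta/\lambda$ from~\eqref{eq_besselG}, evaluate $\Delta_2=\int_{I^+}G_\lambda(0,y)\widehat f(y;\lambda+r)\,m_\nu(\diff y)$ in closed form, and finish with the same algebra. The genuine difference is how $\Delta_2$ is evaluated. The paper applies the general product formula \cite[Eq.~(6.576.4)]{Gradshteyn}, which yields ${}_2F_1\left(1,1+\nu;2;\tfrac{r}{\lambda+r}\right)$ and then requires an extra reduction step, rewriting the hypergeometric function as an incomplete beta integral, before arriving at $\Delta_2=\frac{\beta}{r}\bigl((\lambda/(\lambda+r))^\nu-1\bigr)$. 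You instead invoke the equal-order formula $\int_0^\infty x K_\nu(ax)K_\nu(bx)\,\diff x=\frac{\pi(ab)^{-\nu}(a^{2\nu}-b^{2\nu})}{2\sin(\pi\nu)(a^2-b^2)}$ \cite[Eq.~(6.521.3)]{Gradshteyn}, which, combined with the reflection formula $\Gamma(1+\nu)\Gamma(-\nu)=-\pi/\sin(\pi\nu)$ and $a^2-b^2=-2r$, lands on the same expression for $\Delta_2$ in one step; I have checked that your constants and signs work out, and the final simplification coincides with the paper's. Your route buys brevity (no hypergeometric detour); the paper's choice of the more general \cite[Eq.~(6.576.4)]{Gradshteyn} is consistent with its later use of that same entry for the unequal-order integrals $\int_0^\infty z^{j+1}K_\nu(z)K_{\nu+j}(z)\,\diff z$ in the proof of Theorem~\ref{thm_besselrec}, where your equal-order formula would not apply. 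One small point of hygiene, which the paper handles explicitly: your evaluation of $\Delta_2$ divides by $a^2-b^2=-2r$, so it presupposes $r>0$; the case $r=0$ of~\eqref{eq_LTbessel} should be noted separately as trivial, both sides being equal to~$1$.
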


\begin{proof}
The identity is trivial when $r=0$. For $r>0$ it follows from~\eqref{eq_thmE0} that
\begin{align}\label{eq_deltas}
\EE_0(\ee^{-rA_T}) &= \frac{1}{\lambda+r} \left( \lambda + r \, \frac{1-\lambda \int_0^\infty G_\lambda (0,y) \,m_\nu(\diff y)}{1+r \int_0^\infty G_\lambda (0,y) \widehat{f}(y;\lambda+r) \,m_\nu(\diff y)} \right) \nonumber\\
&= \frac{1}{\lambda+r} \left( \lambda + r \, \frac{1-\lambda \Delta_1}{1+r \Delta_2} \right),
\end{align}
where we need to calculate the integrals $\Delta_1$ and $\Delta_2$. From~\eqref{eq_besselG} we already have that
\begin{equation*}
\Delta_1 = \int_0^\infty G_\lambda(0,y) m_\nu(\diff y) = \frac{\beta}{\lambda}.
\end{equation*}
Recalling~\eqref{eq_Ghat} and~\eqref{eq_fbes}, the second integral becomes
\begin{align}\label{eq_delta2}
\Delta_2 &= \int_0^\infty G_\lambda(0,y) \widehat{f}(y;\lambda+r) \, m_\nu(\diff y) \nonumber\\
&= \frac{4\beta}{\Gamma(\nu+1)\Gamma(-\nu)} \left( \frac{\lambda}{\lambda+r} \right)^{\frac{\nu}{2}} \int_0^\infty y K_\nu \big(y\sqrt{2(\lambda+r)}\big) K_\nu \big(y\sqrt{2\lambda}\big) \diff y \nonumber\\
&= \frac{-\beta\nu}{\lambda+r} \left( \frac{\lambda}{\lambda+r} \right)^{\nu} {}_2F_1 \left( 1,1+\nu\,;2\,;\frac{r}{\lambda+r}\right),
\end{align}
after applying an integral formula for modified Bessel functions of the second kind~\cite[Eq.~(6.576.4)]{Gradshteyn}. The hypergeometric function can in this case be rewritten using an incomplete beta function \cite[Eq.~(8.391)]{Gradshteyn} as 
\begin{align*}
{}_2F_1 \left( 1,1+\nu\,;2\,;\frac{r}{\lambda+r}\right) &= \frac{\lambda+r}{r} \int_0^\frac{r}{\lambda+r} (1-t)^{-\nu-1} \diff t \\
&= \frac{\lambda+r}{r\nu} \left( \left( \frac{\lambda}{\lambda+r} \right)^{-\nu} -1 \right).
\end{align*}
Inserting this into~\eqref{eq_delta2} yields
\begin{equation*}
\Delta_2 = \frac{\beta}{r} \left( \left( \frac{\lambda}{\lambda+r} \right)^{\nu} -1 \right). 
\end{equation*}
When inserting $\Delta_1$ and $\Delta_2$ into~\eqref{eq_deltas} we obtain
\begin{align*}
\EE_0(\ee^{-rA_T}) &= \frac{1}{\lambda+r} \left( \lambda + r \, \frac{1-\beta}{1+\beta \Big( \big( \frac{\lambda}{\lambda+r} \big)^{\nu} -1 \Big)} \right) \\
&= \frac{1}{\lambda+r} \left( \lambda + \frac{r(1-\beta)(\lambda+r)^\nu}{(1-\beta)(\lambda+r)^\nu+\beta \lambda^\nu} \right) \\
&= \frac{(1-\beta)\lambda(\lambda+r)^\nu + \beta \lambda^{\nu+1} + (1-\beta)r(\lambda+r)^\nu}{(\lambda+r)((1-\beta)(\lambda+r)^\nu + \beta \lambda^\nu)} \\
&= \frac{\beta \lambda^{\nu+1} + (1-\beta) (\lambda+r)^{\nu+1}}{\beta (\lambda+r) \lambda^{\nu} + (1-\beta) (\lambda+r)^{\nu+1}},
\end{align*}
which proves the claim. 
\hfill\end{proof}

Note that the expression in~\eqref{eq_LTbessel} can be rewritten as
\begin{equation*}
\lambda \int_0^\infty \ee^{-\lambda t} \EE(\ee^{-r A_t}) \diff t = \EE_0(\ee^{-rA_T}) = \lambda \cdot \frac{\beta (\lambda + r)^{-\nu-1} + (1-\beta) \lambda^{-\nu-1}}{\beta (\lambda + r)^{-\nu} + (1-\beta) \lambda^{-\nu}},
\end{equation*}
which is equivalent to (4.a) in~\cite{Barlow}; see also \cite{Lamperti,Watanabe}. 

Next we apply Theorem~\ref{thm_rec} to find a recursive formula for the moments of~$A_1$. 

\begin{theorem}[Skew two-sided Bessel process]\label{thm_besselrec}
For $n\geq 1$,
\begin{equation}\label{eq_besselrec}
\EE_0(A_1^n) = \beta \binom{\nu+n-1}{n-1} - \beta \sum_{k=1}^{n-1} \binom{\nu+k-1}{k} \EE_0(A_1^{n-k}).
\end{equation}
\end{theorem}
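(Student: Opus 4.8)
The plan is to apply the scaling recursion~\eqref{eq_E0A1n} from Theorem~\ref{thm_rec}, which reduces the task to (i) evaluating the first moment $\EE_0(A_1)$ and (ii) identifying the sequence of constants $D_k(\lambda)$ defined in~\eqref{eq_Ck}. For the first moment, formula~\eqref{eq_E0A1} together with the integral $\Delta_1 = \int_0^\infty G_\lambda(0,y)\,m_\nu(\diff y) = \beta/\lambda$ already computed in~\eqref{eq_besselG} gives immediately $\EE_0(A_1) = \lambda\,\Delta_1 = \beta$. It then remains to determine the $D_k$; since the skew two-sided Bessel process is self-similar, Theorem~\ref{thm_rec} guarantees that each $D_k(\lambda)$ is independent of~$\lambda$, so it suffices to read off its value by any convenient route.

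The key step is to extract all the $D_k$ at once from a single generating function, rather than evaluating each Bessel integral in~\eqref{eq_Ck} separately. To this end I would expand $\widehat{f}(y;\lambda+r)$ as a Taylor series in~$r$ about~$r=0$, namely $\widehat{f}(y;\lambda+r) = \sum_{j\geq 0} \frac{r^j}{j!}\,\widehat{f}_\lambda^{(j)}(y;\lambda)$, and integrate term by term against $G_\lambda(0,y)\,m_\nu(\diff y)$. Comparing with the definition~\eqref{eq_Ck}, the $j$th integral equals $\frac{j!}{(-\lambda)^{j+1}}D_{j+1}(\lambda)$, so that the quantity $\Delta_2$ computed in the proof of~\eqref{eq_LTbessel} satisfies
\[
\Delta_2 = \int_0^\infty G_\lambda(0,y)\,\widehat{f}(y;\lambda+r)\,m_\nu(\diff y) = \frac{1}{r}\sum_{k\geq 1}\Bigl(-\tfrac{r}{\lambda}\Bigr)^k D_k(\lambda).
\]
Since we already know $\Delta_2 = \frac{\beta}{r}\bigl((\lambda/(\lambda+r))^\nu - 1\bigr)$, the substitution $u=-r/\lambda$ turns this into $\sum_{k\geq 1} u^k D_k(\lambda) = \beta\bigl((1-u)^{-\nu}-1\bigr)$. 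Expanding $(1-u)^{-\nu} = \sum_{k\geq 0}\binom{\nu+k-1}{k}u^k$ and matching coefficients yields $D_k(\lambda) = \beta\binom{\nu+k-1}{k}$, which indeed does not depend on~$\lambda$, consistently with Theorem~\ref{thm_rec}.

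Finally I would substitute $\EE_0(A_1)=\beta$ and $D_k = \beta\binom{\nu+k-1}{k}$ into~\eqref{eq_E0A1n}, obtaining
\[
\EE_0(A_1^n) = \beta + \beta\sum_{k=1}^{n-1}\binom{\nu+k-1}{k} - \beta\sum_{k=1}^{n-1}\binom{\nu+k-1}{k}\,\EE_0(A_1^{n-k}).
\]
The claimed formula~\eqref{eq_besselrec} then follows once the constant part is simplified by the hockey-stick identity $\sum_{k=0}^{n-1}\binom{\nu+k-1}{k} = \binom{\nu+n-1}{n-1}$, valid for real~$\nu$ by Pascal's rule and induction (using $\binom{\nu-1}{0}=1$), which collapses $\beta + \beta\sum_{k=1}^{n-1}\binom{\nu+k-1}{k}$ to $\beta\binom{\nu+n-1}{n-1}$. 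I expect the main technical point to be the justification of the term-by-term integration, i.e.\ the interchange of the $r$-series with the integral; for fixed $\lambda>0$ and small $|r|$ both sides are analytic in~$r$, so the identification of Taylor coefficients is legitimate. A direct but more laborious alternative would be to evaluate~\eqref{eq_Ck} via the Bessel integral~\cite[Eq.~(6.576.4)]{Gradshteyn}, exactly as was done for~$\Delta_2$.
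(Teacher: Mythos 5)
Your proposal is correct, and it reaches the crucial identity $D_k(\lambda)=\beta\binom{\nu+k-1}{k}$ (the paper's~\eqref{eq_besselD}) by a genuinely different route. The paper evaluates each $D_k$ head-on from~\eqref{eq_Ck}: it differentiates $\widehat{f}(x;\lambda)$ explicitly $k-1$ times in~$\lambda$ via formulas for the derivatives of $g(\nu;x)=x^{-\nu}K_\nu(x)$, integrates every resulting term against the Green kernel using~\cite[Eq.~(6.576.4)]{Gradshteyn}, and then collapses the ensuing double sum with the combinatorial identity~\cite[Eq.~(3.49)]{Gould}, only the top term surviving. You instead recognize $\Delta_2(r)=\int_0^\infty G_\lambda(0,y)\widehat{f}(y;\lambda+r)\,m_\nu(\diff y)$ --- whose closed form $\frac{\beta}{r}\bigl((\lambda/(\lambda+r))^\nu-1\bigr)$ the paper already established in~\eqref{eq_delta2} en route to the moment generating function~\eqref{eq_LTbessel} --- as a generating function of the $D_k$'s, and read them all off at once by matching coefficients with the expansion of $(1-u)^{-\nu}$. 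This is considerably shorter, needs no special-function input beyond what~\eqref{eq_delta2} already used, and explains conceptually why the $D_k$ are binomial coefficients; the paper's computation, by contrast, is self-contained but laborious. Your remaining steps ($\EE_0(A_1)=\beta$ from~\eqref{eq_E0A1} and~\eqref{eq_besselG}, substitution into~\eqref{eq_E0A1n}, and the collapse of the constant part via~\eqref{eq_Gsum}) coincide with the paper's. One refinement to the point you flag: justifying the term-by-term integration by ``both sides are analytic in $r$'' is slightly circular, since knowing that the series represents $\Delta_2$ is exactly what the interchange provides. The clean argument is Tonelli's theorem: for $|r|<\lambda$ one has $\sum_{j\geq 0}\frac{|r|^j}{j!}\bigl|\widehat{f}_\lambda^{(j)}(y;\lambda)\bigr|=\EE_y\bigl(\ee^{-(\lambda-|r|)H_0}\bigr)\leq 1$, while $\int_0^\infty G_\lambda(0,y)\,m_\nu(\diff y)=\beta/\lambda<\infty$, so the interchange is legitimate; the coefficient identification then follows because the two power series in $u=-r/\lambda$ agree on the whole interval corresponding to $0<r<\lambda$.
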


\begin{proof}
Equations~\eqref{eq_Ck}--\eqref{eq_E0A1n} hold, since the skew two-sided Bessel process is self-similar. From~\eqref{eq_E0A1} and~\eqref{eq_besselG} we obtain the first moment, 
\begin{equation}\label{eq_1stmom}
\EE_0(A_1) = \lambda\int_0^\infty G_\lambda(0,y) m_\nu(\diff y) = \beta.
\end{equation}
Next we calculate the coefficients $D_k(\lambda)$ as given in~\eqref{eq_Ck}. Setting $k=1$ gives
\begin{align*}
D_{1}(\lambda) &= -\lambda \int_0^\infty G_\lambda(0,y) \widehat{f}(y;\lambda) m_\nu(\diff y) \\
&= \beta\nu,
\end{align*}
where the integral is given by~\eqref{eq_delta2} with $r=0$ (note that the hypergeometric function is equal to~1 in this case). 
In order to find $D_{k}(\lambda)$ for~$k>1$ we need to differentiate $\widehat{f}(x;\lambda)$ with respect to~$\lambda$. Let 
\begin{equation*}
g(\nu; x) := x^{-\nu} K_\nu (x),
\end{equation*}
for which it can be shown by induction that
\begin{align*}
g^{(n)}(\nu;x) := \frac{\diff^n}{\diff x^n}g(\nu, x) &= \sum_{i=0}^{\lfloor n/2 \rfloor} \frac{(-1)^{n+i} n!}{i! (n-2i)! 2^{i}}x^{-(\nu+i)} K_{\nu+n-i} (x) \\
&= \sum_{i=\lceil n/2 \rceil}^{n} \frac{(-1)^{i} n!}{(n-i)! (2i-n)! 2^{n-i}}x^{-(\nu+n-i)} K_{\nu+i} (x). 
\end{align*}
Writing 
\begin{equation*}
\widehat{f}(x;\lambda) = \frac{2^{\nu+1}}{\Gamma(-\nu)} g(\nu,x\sqrt{2\lambda}),
\end{equation*}
and differentiating (see, e.g.,~\cite[Appx.~5]{handbook} for general formulas) gives 
\begin{align*}
\widehat{f}_\lambda^{(k)}(x;\lambda) &= \frac{2^{\nu+1}}{\Gamma(-\nu)} \sum_{i=1}^{k} \frac{(-1)^{k-i} (2k-1-i)!}{(i-1)! (k-i)!} \frac{(x\sqrt{2})^{i}}{(2\sqrt{\lambda})^{2k-i}} g^{(i)}(\nu;x\sqrt{2\lambda}) \\
 &= \frac{2^{\nu+1}}{\lambda^{k}\Gamma(-\nu)} \sum_{i=1}^{k} \sum_{j=\lceil i/2 \rceil}^{i} \frac{(-1)^{k+i-j} (2k-1-i)! \,i}{(k-i)! (i-j)! (2j-i)! 2^{2k-j}} \\
 &\hspace{14em} \cdot (x\sqrt{2\lambda})^{-(\nu-j)} K_{\nu+j} (x\sqrt{2\lambda}).
\end{align*}
Combining this with~\eqref{eq_Ghat} yields, for~$k=1,2,\dotsc$,
\begin{align*}
D_{k+1}(\lambda) &= \frac{(-\lambda)^{k+1}}{k!} \int_0^\infty G_\lambda(0,y) \widehat{f}_\lambda^{(k)}(y;\lambda) m_\nu(\diff y) \\
&=\frac{-2\beta}{k! \,\Gamma(\nu+1)\Gamma(-\nu)} \sum_{i=1}^{k} \sum_{j=\lceil i/2 \rceil}^{i} \frac{(-1)^{i-j} (2k-1-i)! \,i}{(k-i)! (i-j)! (2j-i)! 2^{2k-j}} \cdot \Delta_{\nu,j},
\end{align*}
where 
\begin{align*}
\Delta_{\nu,j} &:= \sqrt{2\lambda} \int_0^\infty (y\sqrt{2\lambda})^{j+1} K_\nu (y\sqrt{2\lambda}) K_{\nu+j} (y\sqrt{2\lambda}) \diff y \\
&= \int_0^\infty z^{j+1} K_\nu (z) K_{\nu+j}(z) \diff z \\
&= \frac{2^{j-1}}{j+1}\Gamma(1-\nu)\Gamma(\nu+1+j), 
\end{align*}
by an integration formula for modified Bessel functions \cite[Eq.~(6.576.4)]{Gradshteyn}. Inserting this and changing the order of summation, 
\begin{align*}
D_{k+1}(\lambda) &=\frac{-\beta\,\Gamma(1-\nu)}{k! \,\Gamma(\nu+1)\Gamma(-\nu)} \sum_{i=1}^{k} \sum_{j=\lceil i/2 \rceil}^{i} \frac{(-1)^{i-j} (2k-1-i)! \,i\, \Gamma(\nu+1+j)}{(k-i)! (i-j)! (2j-i)! (j+1) 2^{2k-2j}} \\
&=\frac{\beta\nu}{k! \,\Gamma(\nu+1)} \sum_{j=1}^{k} \frac{\Gamma(\nu+1+j)}{(j+1) 2^{2k-2j}} \sum_{i=j}^{\min (k,2j)} \frac{(-1)^{i-j} (2k-1-i)!\,i }{(k-i)! (i-j)! (2j-i)!} \\
&=\frac{\beta}{k \,\Gamma(\nu)} \sum_{j=1}^{k} \frac{\Gamma(\nu+1+j)}{(j+1)! \, 2^{2k-2j}} \sum_{i=j}^{\min (k,2j)} i (-1)^{i-j} \binom{j}{i-j}\binom{2k-1-i}{k-1}.
\end{align*}
In the case $j=k$, the inner sum has only one term, namely
\begin{equation*}
\sum_{i=k}^{k} i (-1)^{i-k} \binom{k}{i-k} \binom{2k-1-i}{k-1} = k. 
\end{equation*}
When $j<k$, we show that the inner sum is zero. Note that for any $k<i<2k$ the summand is zero, since $0 \leq 2k-1-i < k-1$. Thus, when $j<k$ we can always choose~$2j$ as the upper limit for the summation index, and the inner sum becomes
\begin{align*}
\sum_{i=j}^{2j} i (-1)^{i-j} \binom{j}{i-j} \binom{2k-1-i}{k-1} &= \sum_{i=0}^{j} (i+j) (-1)^{i} \binom{j}{i} \binom{2k-1-j-i}{k-1}\\
&= j \Biggl[ \sum_{i=0}^{j} (-1)^{i} \binom{j}{i} \binom{2k-1-j-i}{k-1} \\
&\qquad+ \sum_{i=1}^{j} (-1)^{i} \binom{j-1}{i-1} \binom{2k-1-j-i}{k-1} \Biggr] \\
&= j \left( \binom{2k-2j-1}{k-j-1} - \binom{2k-2j-1}{k-j} \right) \\
&= 0,
\end{align*}
using, in the third step, the identity \cite[Eq.~(3.49)]{Gould}
\begin{equation*}
\sum_{k=0}^{n} (-1)^{k} \binom{n}{k}\binom{x-k}{m} = \binom{x-n}{m-n},
\end{equation*}
valid for $n,m\in\NN$ and $x\notin\{m,\dotsc,n-1\}$. From this we conclude that only the term corresponding to~$j=k$ remains in the expression for~$D_{k+1}(\lambda)$, which thus simplifies to
\begin{equation*}
D_{k+1}(\lambda) = \frac{\beta\,\Gamma(\nu+1+k)}{(k+1)! \,\Gamma(\nu)} = \beta \binom{\nu+k}{k+1}. 
\end{equation*}
Reducing the index $k$ by 1 and recalling that $D_1(\lambda)=\beta\nu$, we conclude that
\begin{equation}\label{eq_besselD}
D_{k}(\lambda) = \beta \binom{\nu+k-1}{k},
\end{equation}
for all~$k\geq 1$. Inserting this and~\eqref{eq_1stmom} into~\eqref{eq_E0A1n} results in the recursion
\begin{align*}
\EE_0(A_1^n) &= \beta + \beta \sum_{k=1}^{n-1} \left(1 - \EE_0(A_1^{n-k}) \right) \binom{\nu+k-1}{k} \\
&= \beta \sum_{k=0}^{n-1} \binom{\nu+k-1}{k} - \beta \sum_{k=1}^{n-1} \binom{\nu+k-1}{k} \EE_0(A_1^{n-k}) \\
&= \beta \binom{\nu+n-1}{n-1} - \beta \sum_{k=1}^{n-1} \binom{\nu+k-1}{k} \EE_0(A_1^{n-k}),
\end{align*}
where the last step follows by the identity \cite[Eq.~(1.49)]{Gould} 
\begin{equation}\label{eq_Gsum}
\sum_{k=0}^n \binom{x+k}{k} = \binom{x+n+1}{n}.
\end{equation}
This proves the theorem. 
\hfill\end{proof}

\begin{corollary}
The mapping $\beta \mapsto \EE_0(A_1^n)$ is continuous and increasing. 
\end{corollary}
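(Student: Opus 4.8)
The plan is to derive both properties directly from the recursion~\eqref{eq_besselrec}, writing $M_n(\beta) := \EE_0(A_1^n)$ and keeping $\nu\in(-1,0)$ fixed throughout. Continuity will come for free: since $M_0=1$ and $M_1(\beta)=\beta$, and since \eqref{eq_besselrec} expresses $M_n$ as a fixed linear combination---with $\nu$-dependent but $\beta$-free coefficients---of $\beta$ and the products $\beta\,M_{n-k}(\beta)$, a one-line induction on $n$ shows that each $M_n$ is a polynomial in $\beta$. In particular it is continuous (indeed $C^\infty$) on $[0,1]$, which settles the first half of the claim.

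For monotonicity I would first pin down the signs of the binomial coefficients in \eqref{eq_besselrec}. Setting $b_n := \binom{\nu+n-1}{n-1}$ and $c_k := -\binom{\nu+k-1}{k}$, the factorization $\binom{\nu+k-1}{k} = \frac{\nu(\nu+1)\cdots(\nu+k-1)}{k!}$ has exactly one negative factor (namely $\nu<0$; the remaining ones $\nu+1,\dots,\nu+k-1$ all lie in $(0,\infty)$ because $\nu>-1$), so $c_k>0$ for every $k\ge 1$, and likewise $b_n=\frac{(\nu+1)\cdots(\nu+n-1)}{(n-1)!}>0$. With this notation the recursion reads
\begin{equation*}
M_n(\beta) = \beta\Bigl(b_n + \sum_{k=1}^{n-1} c_k\, M_{n-k}(\beta)\Bigr),
\end{equation*}
i.e. $\beta$ times a sum of manifestly positive quantities, which also re-confirms $M_n>0$.

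Next I would differentiate in $\beta$ and induct. Because every $M_j$ is a moment of the $[0,1]$-valued variable $A_1$ we have $M_j(\beta)\ge 0$, so the bracketed term is strictly positive (it already contains $b_n>0$). Differentiating gives
\begin{equation*}
M_n'(\beta) = \Bigl(b_n + \sum_{k=1}^{n-1} c_k\, M_{n-k}(\beta)\Bigr) + \beta\sum_{k=1}^{n-1} c_k\, M_{n-k}'(\beta),
\end{equation*}
in which the first group is $>0$. The base case $M_1'(\beta)=1>0$ starts the induction; assuming $M_{n-k}'\ge 0$ for $1\le k\le n-1$, every summand in the second group is nonnegative, whence $M_n'(\beta)>0$ for all $\beta\in[0,1]$. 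This yields that $\beta\mapsto M_n(\beta)$ is strictly increasing for each $n\ge 1$.

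The only delicate point---and the place where the argument could be derailed---is the sign bookkeeping: one must verify $c_k>0$ for \emph{all} $k\ge1$ uniformly over $\nu\in(-1,0)$, not merely the leading case $c_1=-\nu$, together with $b_n>0$, so that the recursion genuinely takes the form ``$\beta$ times a sum of positive terms.'' Once these signs are fixed, positivity and strict monotonicity both follow from the same short induction, with no analytic input beyond \eqref{eq_besselrec}. As a sanity check one can also give a purely probabilistic proof of monotonicity: realize all the processes on one space via the $\beta$-free reflected Bessel process $|X|$ together with i.i.d.\ uniform marks $U_e$ attached to its excursions, calling an excursion positive iff $U_e\le\beta$; then $\beta_1<\beta_2$ forces $A_1^{(\beta_1)}\le A_1^{(\beta_2)}$ pathwise, and taking $n$-th powers and expectations recovers the same conclusion.
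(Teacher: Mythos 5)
Your proof is correct and takes essentially the same approach as the paper: both arguments rest on pinning down the signs $\binom{\nu+n-1}{n-1}>0$ and $\binom{\nu+k-1}{k}<0$ for $\nu\in(-1,0)$ (you via the factored product $\frac{\nu(\nu+1)\cdots(\nu+k-1)}{k!}$, the paper via gamma-function ratios) and then running an induction on the recursion~\eqref{eq_besselrec}. Your explicit differentiation step and the closing excursion-coupling sketch are just more detailed, respectively alternative, packaging of the induction the paper invokes in one line.
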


\begin{proof}
Since $\nu\in(-1,0)$ it follows from the properties of the gamma function that 
\begin{equation*}
\binom{\nu+n-1}{n-1} = \frac{\Gamma(\nu+n)}{\Gamma(n)\Gamma(\nu+1)}>0, \quad \binom{\nu+n-1}{n} = \frac{\Gamma(\nu+n)}{\Gamma(n+1)\Gamma(\nu)}<0,
\end{equation*}
for all~$n\in\ZZ_+$. The result then immediately follows from~\eqref{eq_besselrec} by induction. 
\hfill\end{proof}

In the following, we use $\stirlingone{n}{k}$ for unsigned Stirling numbers of the first kind and $\stirlingtwo{n}{k}$ for Stirling numbers of the second kind. They are defined recursively through
\begin{equation}\label{eq_stirec}
\stirlingone{n+1}{k} = n \stirlingone{n}{k} + \stirlingone{n}{k-1} \quad \text{and} \quad \stirlingtwo{n+1}{k} = k \stirlingtwo{n}{k} + \stirlingtwo{n}{k-1},
\end{equation}
for $n,k\in\ZZ$, with initial conditions
\begin{equation*}
\stirlingone{0}{0} = \stirlingtwo{0}{0} = 1, \,\quad \stirlingone{n}{0} = \stirlingone{0}{n}= \stirlingtwo{n}{0} = \stirlingtwo{0}{n} = 0, \quad n\neq 0.
\end{equation*}
The combinatorial interpretation of these numbers is that $\stirlingone{n}{k}$ counts the number of permutations of $n$ elements with $k$ disjoint cycles, whereas $\stirlingtwo{n}{k}$ is the number of ways to partition a set of $n$ elements into $k$ nonempty subsets. 
The notation for Stirling numbers varies between different authors; we use the notation recommended in~\cite{Knuth}.

From the recursion in~\eqref{eq_besselrec} we derive the following explicit expression for the moments of~$A_1$. 

\begin{theorem}[Skew two-sided Bessel process]\label{thm_besselmom}
For any $n\geq 1$,
\begin{equation}\label{eq_besselmom}
\EE_0(A_1^n) = \sum_{k=0}^{n-1}\sum_{j=0}^{k} \frac{ (-1)^{j} j!}{(n-1)!} \stirlingone{n}{k+1} \stirlingtwo{k+1}{j+1} \nu^k \beta^{j+1}.
\end{equation}
\end{theorem}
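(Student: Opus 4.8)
The plan is to prove the explicit formula~\eqref{eq_besselmom} by induction on $n$, using the recursion~\eqref{eq_besselrec} as the engine. The first step is to establish the base case $n=1$: plugging $n=1$ into the right-hand side leaves only the term $k=j=0$, which gives $\frac{1}{0!}\stirlingone{1}{1}\stirlingtwo{1}{1}\nu^0\beta^1 = \beta$, matching $\EE_0(A_1)=\beta$ from~\eqref{eq_1stmom}. Next I would assume~\eqref{eq_besselmom} holds for all indices strictly less than $n$ and substitute these expressions into the recursion, keeping in mind that the coefficient $\beta\binom{\nu+k-1}{k}$ from~\eqref{eq_besselD} must itself be expanded in terms of $\nu$. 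The key identity here is that the generalized binomial coefficient has the Stirling-number expansion
\begin{equation*}
\binom{\nu+k-1}{k} = \frac{1}{k!}\prod_{\ell=1}^{k}(\nu+\ell-1) = \frac{1}{k!}\sum_{i} \stirlingone{k}{i}(-1)^{k-i}(-\nu)^{\cdots},
\end{equation*}
or more cleanly, that $\binom{\nu+n-1}{n-1}=\frac{1}{(n-1)!}\sum_{k} \stirlingone{n}{k+1}\nu^k$ up to sign bookkeeping; getting these signs right is essential.

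The heart of the argument is then a double-sum manipulation. After inserting the inductive hypothesis into~\eqref{eq_besselrec}, the right-hand side becomes a sum over $k$ of products of binomial coefficients (expanded via first-kind Stirling numbers) with lower moments (expanded via the formula for $\EE_0(A_1^{n-k})$). I would collect the coefficient of a fixed monomial $\nu^k\beta^{j+1}$ and show it equals $\frac{(-1)^j j!}{(n-1)!}\stirlingone{n}{k+1}\stirlingtwo{k+1}{j+1}$. This requires two convolution-type Stirling identities: one handling the first-kind Stirling numbers coming from the product of $\binom{\nu+k-1}{k}$ with the factor $\stirlingone{n-k}{\cdots}$ from the lower moment (this should reproduce $\stirlingone{n}{k+1}$ via the basic recursion~\eqref{eq_stirec} for $\stirlingone{\cdot}{\cdot}$, reflecting the fact that $\genfrac[]{0pt}{}{n}{\cdot}$ counts permutations built from a cycle of length $k$ attached to a permutation of the rest), and one handling the second-kind Stirling numbers combined with the alternating factorials $(-1)^j j!$. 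The second-kind portion should telescope or convolve using $\stirlingtwo{k+1}{j+1}=(j+1)\stirlingtwo{k}{j+1}+\stirlingtwo{k}{j}$.

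The main obstacle I anticipate is precisely the two-fold Stirling convolution in the inductive step: the formula packages three separate combinatorial objects ($\stirlingone{}{}$, $\stirlingtwo{}{}$, and the factorials $(-1)^j j!$), and the recursion mixes all of them simultaneously. A cleaner route, which I would try first, is to sidestep the raw recursion by working at the level of ordinary generating functions in $\nu$. Since the horizontal generating function $\sum_k \stirlingone{n}{k}x^k = x(x+1)\cdots(x+n-1) = x^{\overline{n}}$ and the exponential generating structure of $\stirlingtwo{}{}$ are classical, one can recognize $\beta^{j+1}(-1)^j j!$ as arising from a composition with the falling-factorial/ordinary-power transition matrix (the second-kind Stirling numbers are exactly the change of basis from monomials to falling factorials). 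Concretely, I would verify that~\eqref{eq_besselmom} is equivalent to the statement that $\sum_n \EE_0(A_1^n)\frac{\nu^{?}}{\cdots}$ factors through the identity $\sum_{j}\stirlingtwo{k+1}{j+1}(-1)^j j!\,\beta^{j+1} = $ a single closed expression in $\beta$ (a polylogarithm-free Eulerian-type sum), thereby reducing the double sum to a single sum that the recursion~\eqref{eq_besselrec} can be checked against directly. Whichever route succeeds, the verification that the collected coefficient satisfies the same recursion and initial condition as the claimed formula completes the induction.
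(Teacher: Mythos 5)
Your overall strategy is the paper's own: induction on $n$ through the recursion~\eqref{eq_besselrec}, expansion of the binomial coefficients in powers of $\nu$ via unsigned Stirling numbers of the first kind (your ``clean'' version $\binom{\nu+n-1}{n-1}=\frac{1}{(n-1)!}\sum_k\stirlingone{n}{k+1}\nu^k$ is exactly the paper's~\eqref{eq_stirbin}, and in fact no extra signs arise, since the rising factorial is the horizontal generating function of the unsigned $\stirlingone{n}{k}$), and comparison of coefficients of each monomial $\nu^k\beta^{j+1}$. The base case and the coefficient-extraction setup are all as in the paper.

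The gap is at what you yourself call the heart of the argument. For the first-kind Stirling numbers you assert that the convolution ``should reproduce $\stirlingone{n}{k+1}$ via the basic recursion~\eqref{eq_stirec}.'' That is not enough: after inserting the inductive hypothesis into~\eqref{eq_besselrec}, the coefficient of $\nu^{k}\beta^{j+1}$ contains a genuine triple-sum convolution whose first-kind part has the shape $\sum_{m}\binom{N}{m-1}\stirlingone{N-m+1}{k-i+1}\stirlingone{m}{i}$, and collapsing it requires the shifted convolution identity
\begin{equation*}
\stirlingone{n+1}{l+m+1}\binom{l+m}{l}=\sum_{k=l}^{n-m}\stirlingone{k+1}{l+1}\stirlingone{n-k}{m}\binom{n}{k},
\end{equation*}
which is precisely the paper's Lemma~\ref{lemma_stirling}. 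This does not follow from the two-term recursion alone; the paper proves it by a separate induction resting on the classical identity~\eqref{eq_stirlingid} (the Stirling analogue of Vandermonde convolution), and without it the inductive step cannot close. Your second-kind step is closer to complete: the needed identity $\sum_{i=j}^{k}\stirlingtwo{i}{j}\binom{k}{i-1}=j\stirlingtwo{k+1}{j+1}$ does follow from~\eqref{eq_stirec}, but only in combination with $\sum_{i=j}^{k}\stirlingtwo{i}{j}\binom{k}{i}=\stirlingtwo{k+1}{j+1}$, which you do not cite. Finally, your proposed ``cleaner route'' via generating functions is left with unresolved placeholders; in particular, for fixed $k$ the sum $\sum_{j}\stirlingtwo{k+1}{j+1}(-1)^{j}j!\,\beta^{j+1}$ is a Fubini-type polynomial of degree $k+1$ in $\beta$ (e.g.\ $\beta$, $\beta(1-\beta)$, $\beta(1-\beta)(1-2\beta)$, \dots), not a single closed expression, so the promised reduction of the double sum to a single sum is not available in the form you describe. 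As it stands the proposal is a correct plan matching the paper's route, but the decisive combinatorial lemma is missing and its difficulty underestimated.
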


In the proof of Theorem~\ref{thm_besselmom} we need the following result. 

\begin{lemma}\label{lemma_stirling}
For any $n, m, l \in\NN$, 
\begin{equation}\label{eq_stirling}
\stirlingone{n+1}{l+m+1} \binom{l+m}{l} = \sum_{k=l}^{n-m} \stirlingone{k+1}{l+1} \stirlingone{n-k}{m} \binom{n}{k}.
\end{equation}
\end{lemma}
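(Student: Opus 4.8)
The identity to prove,
\[
\stirlingone{n+1}{l+m+1} \binom{l+m}{l} = \sum_{k=l}^{n-m} \stirlingone{k+1}{l+1} \stirlingone{n-k}{m} \binom{n}{k},
\]
relates unsigned Stirling numbers of the first kind, so the natural strategy is to exploit their combinatorial meaning as cycle-counting numbers for permutations. The plan is to interpret both sides as counts of the same family of combinatorial objects and argue bijectively. Concretely, I would consider permutations of the set $\{0,1,\dots,n\}$ (which has $n+1$ elements) having exactly $l+m+1$ disjoint cycles: the left-hand factor $\stirlingone{n+1}{l+m+1}$ counts exactly these. The distinguished element $0$ lies in one particular cycle, and I would split the count according to which elements accompany $0$ in its cycle versus which lie in the remaining $l+m$ cycles. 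The binomial coefficient $\binom{l+m}{l}$ on the left should then arise from choosing how to distribute labels or cycle-roles into two groups of sizes $l$ and $m$.

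The key step is to set up the right-hand side as the result of the following refinement. First choose a subset of size $k$ (via $\binom{n}{k}$) from the $n$ non-special elements $\{1,\dots,n\}$; on this subset together with the element $0$ (giving $k+1$ elements) form a permutation with exactly $l+1$ cycles, counted by $\stirlingone{k+1}{l+1}$; on the complementary $n-k$ elements form a permutation with exactly $m$ cycles, counted by $\stirlingone{n-k}{m}$. This produces a permutation of $\{0,\dots,n\}$ with $(l+1)+m = l+m+1$ cycles in total, with the constraint that the $l+1$ cycles of the first block all "see" the element $0$ in the sense of being built from the $0$-containing block. The main obstacle, and the heart of the argument, is that this construction overcounts relative to the left-hand side: an arbitrary permutation with $l+m+1$ cycles does not come with a canonical partition of its cycles into a group of $l+1$ and a group of $m$, nor with a canonical marking of which non-special elements belong to the $0$-block. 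The factor $\binom{l+m}{l}$ is precisely the multiplicity correcting this, reflecting the $\binom{l+m}{l}$ ways of designating, among the $l+m$ cycles not distinguished by containing $0$, which $l$ of them join the $0$-cycle's block versus the remaining $m$.

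To make this rigorous I would fix a target permutation $\sigma$ of $\{0,\dots,n\}$ with $l+m+1$ cycles, and count in how many ways it arises from the right-hand construction. The cycle containing $0$ is forced to lie in the first block, so among the other $l+m$ cycles we must choose which $l$ join the first block; this gives exactly $\binom{l+m}{l}$ preimages, independent of $\sigma$, yielding the stated identity. An alternative, if the bijective bookkeeping proves delicate, is a generating-function proof using the rising-factorial identity $x^{(n+1)} = \sum_k \stirlingone{n+1}{k} x^k$ where $x^{(n+1)} = x(x+1)\cdots(x+n)$: one would extract the coefficient of a suitable monomial from the factorization $x^{(n+1)} = x^{(k+1)} \cdot (x+k+1)^{(n-k)}$ after a convolution, but I expect the cleanest route is the direct combinatorial decomposition above, with the $0$-containing cycle playing the role of the special block. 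The remaining verification — that the cycle counts add correctly and that no object is undercounted — is routine once the block-assignment multiplicity is identified.
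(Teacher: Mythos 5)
Your double-counting argument is correct, and it takes a genuinely different route from the paper. The paper proves the identity by induction on $n$: it cites the classical convolution identity $\stirlingone{n}{l+m}\binom{l+m}{l} = \sum_{k=l}^{n-m}\stirlingone{k}{l}\stirlingone{n-k}{m}\binom{n}{k}$ from Graham--Knuth--Patashnik and then pushes the shifted version through the Stirling recurrence $\stirlingone{N+2}{l+m+1} = (N+1)\stirlingone{N+1}{l+m+1} + \stirlingone{N+1}{l+m}$, combining the induction hypothesis with the cited identity and re-indexing the sum. Your proof instead counts, in two ways, pairs consisting of a permutation $\sigma$ of $\{0,1,\dotsc,n\}$ with $l+m+1$ cycles together with a choice of $l$ of its $l+m$ cycles not containing $0$: grouping the chosen $l$ cycles with the $0$-cycle and letting $k$ be the number of nonzero elements in that block recovers exactly the right-hand side (the range $l\leq k\leq n-m$ falls out of the fact that every cycle is nonempty), while counting pairs directly gives the left-hand side. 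This buys you a self-contained, induction-free proof that explains combinatorially both the factor $\binom{l+m}{l}$ and the summation limits; moreover, the same argument without the distinguished element $0$ proves the cited identity (6.29) itself, so your method subsumes the paper's external ingredient. What the paper's approach buys is brevity on the page given the citation, and it stays entirely within the algebraic toolkit (recurrences and known identities) used elsewhere in that section; your version requires careful but routine verification that the block decomposition is a bijection, which you have correctly identified as the only remaining bookkeeping.
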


\begin{proof}
Using the well known and very much similar identity \cite[Eq.~(6.29)]{Graham}
\begin{equation}\label{eq_stirlingid}
\stirlingone{n}{l+m} \binom{l+m}{l} = \sum_{k=l}^{n-m} \stirlingone{k}{l} \stirlingone{n-k}{m} \binom{n}{k},
\end{equation}
we prove~\eqref{eq_stirling} by induction. 
It is easy to verify that~\eqref{eq_stirling} holds for any $m,l\in\NN$ when $n=0$ (both sides are zero except when $m=l=0$, in which case both sides are equal to~1). Assume that~\eqref{eq_stirling} holds for~$n=N$ and all~$m,l\in\NN$. 
Now let $n=N+1$ and let~$m$ and~$l$ be arbitrary numbers in $\NN$. 
Using the recursion in~\eqref{eq_stirec} we get that
\begin{align*}
&\stirlingone{N+2}{l+m+1} \binom{l+m}{l} \\
&\hspace{0.5em} = (N+1)\stirlingone{N+1}{l+m+1} \binom{l+m}{l} + \stirlingone{N+1}{l+m} \binom{l+m}{l} \\
&\hspace{0.5em} = (N+1)\sum_{k=l}^{N-m} \stirlingone{k+1}{l+1} \stirlingone{N-k}{m} \binom{N}{k} + \!\sum_{k=l}^{N+1-m} \stirlingone{k}{l} \stirlingone{N+1-k}{m} \binom{N+1}{k} \\
&\hspace{0.5em} = \stirlingone{N+1-l}{m} \binom{N+1}{l} + \!\sum_{k=l+1}^{N+1-m} \left( k\stirlingone{k}{l+1} + \stirlingone{k}{l} \right) \stirlingone{N+1-k}{m} \binom{N+1}{k} \\
&\hspace{0.5em} = \sum_{k=l}^{N+1-m} \stirlingone{k+1}{l+1} \stirlingone{N+1-k}{m} \binom{N+1}{k}, 
\end{align*}
where in the second step we have used both the induction assumption and~\eqref{eq_stirlingid}. Thus, \eqref{eq_stirling}~holds also for~$n=N+1$ and all~$m,l\in\NN$. The result follows by induction. 
\hfill\end{proof}

\begin{proof}[Proof of Theorem~\ref{thm_besselmom}]
The result is proved by induction from Theorem~\ref{thm_besselrec}. The identity~\eqref{eq_besselmom} holds for~$n=1$, since $\EE_0(A_1)=\beta$. Assume that~\eqref{eq_besselmom} holds for all~$n\in\{1,2,\dotsc,N\}$. 
We wish to prove that it then also holds for~$n=N+1$, that is,
\begin{align}\label{eq_exleft}
\EE_0(A_1^{N+1}) &= \sum_{k=0}^{N}\sum_{j=0}^{k} \frac{ (-1)^{j} j!}{N!} \stirlingone{N+1}{k+1} \stirlingtwo{k+1}{j+1} \nu^k \beta^{j+1} \nonumber\\
&= \frac{\beta}{N!} \sum_{k=0}^{N} \stirlingone{N+1}{k+1} \nu^k + \sum_{k=1}^{N}\sum_{j=1}^{k} \frac{ (-1)^{j} j!}{N!} \stirlingone{N+1}{k+1} \stirlingtwo{k+1}{j+1} \nu^k \beta^{j+1} \nonumber\\
&= \beta \binom{\nu +N}{N} - \frac{1}{N!} \sum_{k=1}^{N}\sum_{j=1}^{k} j! \stirlingone{N+1}{k+1} \stirlingtwo{k+1}{j+1} \nu^k (-\beta)^{j+1}. 
\end{align}
Here the last step follows from the identity \cite[Eq.~(6.11)]{Graham}
\begin{equation*}
\sum_{k=0}^{n} \stirlingone{n}{k} x^k = x(x+1)\dotsm(x+n-1) = \frac{\Gamma(x+n)}{\Gamma(x)}, 
\end{equation*}
which gives that
\begin{equation}\label{eq_stirbin}
\binom{\nu+n}{n} = \frac{1}{n!} \frac{\Gamma(\nu+n+1)}{\Gamma(\nu+1)} = \frac{1}{n!} \cdot \frac{1}{\nu} \sum_{k=1}^{n+1} \stirlingone{n+1}{k} \nu^k = \frac{1}{n!} \sum_{k=0}^{n} \stirlingone{n+1}{k+1} \nu^k. 
\end{equation}
On the other hand, by the recursive formula~\eqref{eq_besselrec} we have
\begin{equation}\label{eq_exright}
\EE_0(A_1^{N+1}) = \beta \binom{\nu+N}{N} - \beta \sum_{m=1}^{N} \binom{\nu+N-m}{N-m+1} \EE_0(A_1^{m}).
\end{equation}
Applying~\eqref{eq_stirbin}, the binomial coefficient in the summand can be rewritten as
\begin{align*}
\binom{\nu+N-m}{N-m+1} &= \frac{\nu}{N-m+1} \binom{\nu+N-m}{N-m} \\
&= \frac{1}{(N-m+1)!} \sum_{k=m}^{N} \stirlingone{N-m+1}{N-k+1} \nu^{N-k+1},
\end{align*}
and using the induction assumption it follows that
\begin{align*}
& \beta \sum_{m=1}^{N} \binom{\nu+N-m}{N-m+1} \EE_0(A_1^{m}) \\
&\hspace{0.5em} = \sum_{m=1}^{N} \binom{\nu+N-m}{N-m+1} \sum_{i=1}^{m}\sum_{j=1}^{i} \frac{(j-1)!}{(m-1)!} \stirlingone{m}{i} \stirlingtwo{i}{j} \nu^{i-1} (-\beta)^{j+1} \\
&\hspace{0.5em} = \sum_{m=1}^{N}\sum_{i=1}^{m}\sum_{j=1}^{i} \sum_{k=m}^N \frac{(j-1)!}{N!} \binom{N}{m-1} \stirlingone{N-m+1}{N-k+1}\stirlingone{m}{i} \stirlingtwo{i}{j} \nu^{N-k+i} (-\beta)^{j+1} \\
&\hspace{0.5em} = \frac{1}{N!} \sum_{k=1}^{N}\sum_{j=1}^{k} \nu^{k} (-\beta)^{j+1} (j-1)! \sum_{i=j}^{k} \sum_{m=i}^{N-k+i} \binom{N}{m-1} \stirlingone{N-m+1}{k-i+1}\stirlingone{m}{i} \stirlingtwo{i}{j}. 
\end{align*}
When this is inserted into~\eqref{eq_exright}, the result should be equivalent to~\eqref{eq_exleft}. After subtracting the identical first terms we are left with polynomials in $\nu$ and $\beta$ on both sides, and thus it suffices to show that the coefficients are equal. Indeed, 
\begin{align*}
& (j-1)! \sum_{i=j}^{k} \sum_{m=i}^{N-k+i} \binom{N}{m-1} \stirlingone{N-m+1}{k-i+1}\stirlingone{m}{i} \stirlingtwo{i}{j} \\
&\hspace{4em} = (j-1)! \sum_{i=j}^{k} \stirlingtwo{i}{j} \sum_{m=i-1}^{N-(k-i+1)} \binom{N}{m} \stirlingone{N-m}{k-i+1}\stirlingone{m+1}{i} \\
&\hspace{4em} = (j-1)! \sum_{i=j}^{k} \stirlingtwo{i}{j} \stirlingone{N+1}{k+1} \binom{k}{i-1} \\
&\hspace{4em} = j! \stirlingone{N+1}{k+1} \stirlingtwo{k+1}{j+1},
\end{align*}
where the second step follows from Lemma~\ref{lemma_stirling}. The last step holds since
\begin{align*}
\sum_{i=j}^{k} \stirlingtwo{i}{j} \binom{k}{i-1} &= \sum_{i=j}^{k} \stirlingtwo{i}{j} \left( \binom{k+1}{i} - \binom{k}{i} \right) \\
&= \sum_{i=j}^{k+1} \stirlingtwo{i}{j} \binom{k+1}{i} - \stirlingtwo{k+1}{j} - \sum_{i=j}^{k} \stirlingtwo{i}{j} \binom{k}{i} \\
&= \stirlingtwo{k+2}{j+1} - \stirlingtwo{k+1}{j} - \stirlingtwo{k+1}{j+1} \\
&= (j+1) \stirlingtwo{k+1}{j+1} - \stirlingtwo{k+1}{j+1} \\
&= j \stirlingtwo{k+1}{j+1},
\end{align*}
using both~\eqref{eq_stirec} and the identity \cite[Eq.~(6.15)]{Graham}
\begin{equation*}
\sum_{i=j}^{k} \stirlingtwo{i}{j} \binom{k}{i} = \stirlingtwo{k+1}{j+1}.
\end{equation*}
This completes the proof for~$n=N+1$. By induction, \eqref{eq_besselmom}~holds for all~$n\geq 1$. 
\hfill\end{proof}

\begin {remark}
The distribution of~$A_1$ for a skew two-sided Bessel process starting from~0 has been characterized in~\cite{Barlow}. 
The moments of $A_1$ can also be calculated numerically for particular values of~$\beta$ and~$\nu$ by integrating the known density~\cite{Watanabe}, for~$x\in(0,1)$,
\begin{equation}\label{eq_besdens}
f(x) = \frac{\frac{1}{\pi}\sin(-\nu \pi) \beta(1-\beta)(x(1-x))^{-\nu-1}}{\beta^2 (1-x)^{-2\nu}+(1-\beta)^2 x^{-2\nu} + 2\beta(1-\beta) (x(1-x))^{-\nu} \cos(-\nu\pi)}.
\end{equation}
We call the distribution induced by this density a Lamperti distribution, as it was first found in~\cite{Lamperti}. 
The result in~\eqref{eq_besselmom} does not, however, seem to be easily obtainable through analytic integration. For the special case of skew Brownian motion, see Remark~\ref{rem_skew}.
\end{remark}

\subsection{Skew Brownian motion}

Let now $(X_t)_{t\geq 0}$ be a skew Brownian motion, see Example~\ref{ex_skewBM}. This corresponds to a skew Bessel process with $\nu=-1/2$, and thus it follows from~\eqref{eq_besselD} that
\begin{equation}\label{SBMDA}
D_k(\lambda) = \beta\binom{k-\frac{3}{2}}{k} = \frac{-\beta}{2^{2k}(2k-1)} \binom{2k}{k},
\end{equation}
and the recursion in \eqref{eq_besselrec} becomes
\begin{align}\label{eq_skewrec}
\EE_0(A_1^n) &= \frac{\beta}{2^{2n-2}}\binom{2n-2}{n-1} + \beta \sum_{k=1}^{n-1} \EE_0(A_1^{n-k}) \frac{1}{2^{2k}(2k-1)} \binom{2k}{k}.
\end{align}
Since the recursion has already been solved for skew Bessel processes, the moments of~$A_1$ for skew Brownian motion can be obtained from Theorem~\ref{thm_besselmom}. 

\begin{theorem}[Skew Brownian motion]\label{thm_skewmom}
For any $n\geq 1$,
\begin{equation}\label{eq_skewmom}
\EE_0(A_1^n) = \sum_{k=0}^{n-1} \binom{n-1+k}{k} \frac{\beta^{n-k}}{2^{n+k-1}}.
\end{equation}
\end{theorem}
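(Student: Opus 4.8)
The plan is to prove \eqref{eq_skewmom} by induction on $n$, verifying that the proposed closed form satisfies the recursion \eqref{eq_skewrec}, which is exactly \eqref{eq_besselrec} of Theorem~\ref{thm_besselrec} specialized to $\nu=-1/2$ via \eqref{SBMDA}. The base case $n=1$ is immediate, since the formula collapses to $\EE_0(A_1)=\beta$, in agreement with \eqref{eq_1stmom}. For the inductive step I would assume \eqref{eq_skewmom} for all exponents below $n$ and substitute these expressions into the convolution $\beta\sum_{k=1}^{n-1}\frac{1}{2^{2k}(2k-1)}\binom{2k}{k}\,\EE_0(A_1^{n-k})$ on the right-hand side of \eqref{eq_skewrec}.

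After the substitution the natural move is to interchange the order of summation and collect the coefficient of each monomial $\beta^p$, $1\le p\le n$. The term $\frac{\beta}{2^{2n-2}}\binom{2n-2}{n-1}$ accounts exactly for the coefficient of $\beta^1$, while for $2\le p\le n$ the coefficient produced by the convolution reduces, after cancelling the common power of $2$, to the claim that
\begin{equation*}
\sum_{k=1}^{n-p+1}\frac{1}{2k-1}\binom{2k}{k}\binom{2n-p-2k}{\,n-p-k+1\,}=2\binom{2n-p-1}{\,n-p\,}.
\end{equation*}
Comparing the left-hand side of \eqref{eq_skewrec}, whose $\beta^p$-coefficient is $2^{-(2n-p)}$ times this sum, with the target coefficient $\binom{2n-p-1}{n-p}\,2^{-(2n-p-1)}$ in \eqref{eq_skewmom} is then precisely what closes the induction.

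The crux, and the step I expect to be the main obstacle, is establishing this binomial convolution identity. I would prove it by reading it as a coefficient extraction: reindexing by $m=n-p-k+1$ recognizes the sum as (part of) the Cauchy product of $\sum_{k\ge 0}\frac{1}{2k-1}\binom{2k}{k}x^k=-\sqrt{1-4x}$ and $\sum_{m\ge 0}\binom{2m+r}{m}x^m=C(x)^{r}/\sqrt{1-4x}$ with $r=p-2$, where $C(x)=(1-\sqrt{1-4x})/(2x)$ is the Catalan series. Their product telescopes to $-C(x)^{p-2}$, so that extracting $[x^{\,n-p+1}]$ via $[x^{s}]C(x)^{a}=\frac{a}{2s+a}\binom{2s+a}{s}$ and adding back the omitted $k=0$ term $b_0=-1$ produces exactly $2\binom{2n-p-1}{n-p}$, as needed.

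An alternative closer to the phrasing ``obtained from Theorem~\ref{thm_besselmom}'' is to set $\nu=-1/2$ directly in the explicit double sum \eqref{eq_besselmom}; this requires evaluating the inner Stirling sum $\sum_{k}\stirlingone{n}{k+1}\stirlingtwo{k+1}{j+1}(-1/2)^k$ in closed form, which looks less transparent than the recursion route. A third, fully self-contained option is to first solve the general recursion \eqref{eq_besselrec} by generating functions, obtaining $\sum_{n\ge1}\EE_0(A_1^n)x^n=\frac{\beta x(1-x)^{-1}}{(1-\beta)(1-x)^{\nu}+\beta}$ from $(1-x)^{-\nu}=\sum_k\binom{\nu+k-1}{k}x^k$, and then specializing to $\nu=-1/2$: after the substitution $w=\sqrt{1-x}$ this becomes $\frac{\beta x}{w(1-\beta+\beta w)}$, and one checks that the generating function of the right-hand side of \eqref{eq_skewmom} equals the same expression. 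Whichever route is taken, the combinatorial heart of the matter is the same Catalan-type coefficient extraction.
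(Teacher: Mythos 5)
Your proposal is correct, but it takes a genuinely different route from the paper's printed proof. The paper deduces \eqref{eq_skewmom} in two short steps: substitute $\nu=-1/2$ into the already-established general formula \eqref{eq_besselmom} of Theorem~\ref{thm_besselmom}, then collapse the resulting double Stirling sum with the Yang--Qiao identity \eqref{eq_bss} and reindex. You instead bypass Theorem~\ref{thm_besselmom} (and with it Lemma~\ref{lemma_stirling} and all Stirling-number machinery) and work directly with the recursion \eqref{eq_skewrec}, i.e.\ with Theorem~\ref{thm_besselrec} specialized via \eqref{SBMDA}: induction on $n$, extraction of the coefficient of $\beta^p$, and reduction of the inductive step to the convolution identity
\begin{equation*}
\sum_{k=1}^{n-p+1}\frac{1}{2k-1}\binom{2k}{k}\binom{2n-p-2k}{n-p-k+1}=2\binom{2n-p-1}{n-p},
\end{equation*}
which your Catalan-type coefficient extraction does prove: the product $\bigl(-\sqrt{1-4x}\bigr)\cdot C(x)^{p-2}/\sqrt{1-4x}=-C(x)^{p-2}$, the correction for the omitted $k=0$ term is $+\binom{2n-p}{n-p+1}$, and $\binom{2n-p}{n-p+1}-\frac{p-2}{2n-p}\binom{2n-p}{n-p+1}=\frac{2(n-p+1)}{2n-p}\binom{2n-p}{n-p+1}=2\binom{2n-p-1}{n-p}$, as required (note $r=p-2\geq 0$ throughout, and the formula $[x^s]C(x)^a=\frac{a}{2s+a}\binom{2s+a}{s}$ degenerates correctly when $a=0$). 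The surrounding bookkeeping is also right: the convolution contributes only to $\beta^p$ with $p\geq 2$, so the first term of \eqref{eq_skewrec} alone matches the $\beta^1$ coefficient, and $2^{-(2n-p)}\cdot 2\binom{2n-p-1}{n-p}=2^{-(2n-p-1)}\binom{2n-p-1}{n-p}$ closes the induction; the only slip is verbal, since the quantity whose $\beta^p$-coefficient equals $2^{-(2n-p)}$ times your sum is the convolution on the \emph{right}-hand side of \eqref{eq_skewrec}, not its left-hand side. Your third route (solving \eqref{eq_besselrec} by generating functions, giving $\sum_{n\geq1}\EE_0(A_1^n)x^n=\beta x(1-x)^{-1}/((1-\beta)(1-x)^{\nu}+\beta)$ and then setting $\nu=-1/2$) is also sound and arguably the cleanest, as it recovers the general case too. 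As for what each approach buys: yours is self-contained at the level of skew Brownian motion and replaces the external identity \eqref{eq_bss} of \cite{YangQiao} by a standard Catalan computation, whereas the paper's turns the theorem into an immediate corollary of the stronger Bessel result it has already proved. Amusingly, the unnumbered remark following the theorem states that the authors' original proof was exactly of your kind --- solving \eqref{eq_skewrec} directly, as presented in \cite{Stenlund} --- and that the printed argument via \eqref{eq_besselmom} and \eqref{eq_bss} was adopted only afterwards.
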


\begin{proof}
Substituting $\nu=-1/2$ in~\eqref{eq_besselmom} yields
\begin{align*}
\EE_0(A_1^n) &= \sum_{i=1}^{n}\sum_{k=1}^{i} \frac{ (-1)^{k-1} (k-1)!}{(n-1)!} \stirlingone{n}{i} \stirlingtwo{i}{k} \left(-\frac{1}{2}\right)^{i-1} \beta^{k} \\
&= \sum_{k=1}^{n} \frac{ (-1)^{k-n} \beta^{k} (k-1)!}{2^{n-1}(n-1)!} \sum_{i=k}^{n} \stirlingone{n}{i} \stirlingtwo{i}{k} (-2)^{n-i} \\
&= \sum_{k=1}^{n} \frac{ \beta^{k}}{2^{2n-k-1}}\binom{2n-k-1}{n-k},
\end{align*}
where in the last step the identity \cite[Eq.~(18)]{YangQiao} 
\begin{equation}\label{eq_bss}
\sum_{i=k}^{n} \stirlingone{n}{i}\stirlingtwo{i}{k} (-2)^{n-i} = (-1)^{n-k}\frac{(2n-k-1)!}{2^{n-k} (k-1)!(n-k)!}
\end{equation}
has been applied. A change of order of summation now gives the result. 
\hfill\end{proof}

\begin{remark}
The result in Theorem~\ref{thm_skewmom} was first proved by solving the recursive formula~\eqref{eq_skewrec} in a different way than in the proof of Theorem~\ref{thm_besselmom}. However, later the authors became aware that the identity \eqref{eq_bss} is already found in \cite{YangQiao}, and thus the result follows from \eqref{eq_besselmom}, as shown above. The earlier, alternative method is presented with additional comments in~\cite{Stenlund}. 
\end{remark}

\begin{corollary}[Standard Brownian motion]\label{cor11}
For $\beta=1/2$ and $n\geq 1$ it holds that
\begin{equation*}
\EE_0(A_1^n) = \frac{1}{2^{2n}} \binom{2n}{n},
\end{equation*}
and, hence, $A_1$ has the arcsine law. 
\end{corollary}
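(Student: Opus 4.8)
The plan is to specialize the explicit moment formula from Theorem~\ref{thm_skewmom} to $\beta=1/2$ and then recognize the resulting numbers as the moments of the arcsine law. Setting $\beta=1/2$ in~\eqref{eq_skewmom}, the two powers of two combine, since $(n-k)+(n+k-1)=2n-1$ independently of $k$, so that
\begin{equation*}
\EE_0(A_1^n)=\sum_{k=0}^{n-1}\binom{n-1+k}{k}\frac{1}{2^{n-k}}\cdot\frac{1}{2^{n+k-1}}=\frac{1}{2^{2n-1}}\sum_{k=0}^{n-1}\binom{n-1+k}{k}.
\end{equation*}

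Next I would evaluate the remaining binomial sum using the identity~\eqref{eq_Gsum} with $x=n-1$, which gives $\sum_{k=0}^{n-1}\binom{n-1+k}{k}=\binom{2n-1}{n-1}$, and hence $\EE_0(A_1^n)=2^{-(2n-1)}\binom{2n-1}{n-1}$. Using the symmetry $\binom{2n-1}{n-1}=\binom{2n-1}{n}$ together with Pascal's rule $\binom{2n}{n}=\binom{2n-1}{n-1}+\binom{2n-1}{n}$, one obtains $\binom{2n-1}{n-1}=\tfrac12\binom{2n}{n}$, and therefore $\EE_0(A_1^n)=2^{-2n}\binom{2n}{n}$, which is the asserted formula.

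It then remains to identify the distribution of $A_1$ from its moments. For this I would recall that the arcsine law on $[0,1]$, with density $\tfrac{1}{\pi}(x(1-x))^{-1/2}$, has $n$th moment $\tfrac1\pi B(n+\tfrac12,\tfrac12)=\tfrac{\Gamma(n+1/2)}{\sqrt{\pi}\,n!}$; inserting the identity $\Gamma(n+\tfrac12)=\tfrac{(2n)!}{4^n n!}\sqrt{\pi}$ shows this equals $2^{-2n}\binom{2n}{n}$. Thus the moments of $A_1$ coincide with the arcsine moments for every $n\geq 1$. Since $A_1$ takes values in the bounded interval $[0,1]$, its law is uniquely determined by its moments, and one concludes that $A_1$ obeys the arcsine law~\eqref{eq_arcsine_law}.

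The computation presents no real obstacle: every step is an application of an identity already recorded in the paper (namely~\eqref{eq_skewmom} and~\eqref{eq_Gsum}) or a standard gamma-function evaluation. The only point that needs a word of justification is the passage from the sequence of moments to the distribution itself, but this is immediate here because the support is compact, so the Hausdorff moment problem is determinate.
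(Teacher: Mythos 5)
Your proposal is correct and follows the paper's own proof exactly: specialize~\eqref{eq_skewmom} to $\beta=1/2$, collapse the sum with~\eqref{eq_Gsum}, and invoke moment determinacy for the bounded random variable $A_1$ to identify the arcsine law. The only difference is that you spell out the intermediate steps (the reduction $\binom{2n-1}{n-1}=\tfrac12\binom{2n}{n}$ and the computation of the arcsine moments) that the paper leaves implicit.
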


\begin{proof}
The result follows from~\eqref{eq_skewmom} when inserting $\beta=1/2$ and applying~\eqref{eq_Gsum}. 
Since $A_1$ is bounded its moments determine the distribution uniquely, and we recover L\'{e}vy's arcsine law for~$A_1$. 
\hfill\end{proof}

\begin{remark}\label{rem_skew}
The distribution function of~$A_1$ for skew Brownian motion is given in~\cite[Eq.~(2.5), p.~158]{Watanabe} as
\begin{equation*}
\PP_0(A_1\leq x) = \frac{2}{\pi} \arcsin \left( \sqrt{\frac{x}{x+(\beta/(1-\beta))^2(1-x)}} \right), \quad x\in[0,1].
\end{equation*}
This yields the density (see~\cite[p.~782]{WYY} and \cite[p.~196]{Appuhamillage})
\begin{equation*}
f_{A_1}(x) = \frac{\beta(1-\beta)}{\pi \sqrt{x(1-x)} (\beta^2 +x(1-2\beta))}, \quad x\in(0,1),
\end{equation*}
which corresponds to~\eqref{eq_besdens} with $\nu=-1/2$. From this density it is possible to calculate the moments of~$A_1$ by integration. However, using this approach we have not been able to obtain the expression in~\eqref{eq_skewmom}.
\end{remark}

\subsection{Oscillating Brownian motion}

Let $(\widetilde X_t)_{t\geq 0}$ be an oscillating Brownian motion, see Example~\ref{ex_osc}. From~\eqref{oscscaling} it is easily seen that the law of~$A_1$ is the same as the corresponding law for a skew Brownian motion. However, here we wish to demonstrate the use of Theorem~\ref{thm_rec} and therefore give a proof of the following result based on formula~\eqref{eq_E0A1n} therein.

\begin{theorem}[Oscillating Brownian motion]
For any $n\geq 1$,
\begin{equation*}
\EE_0(A_1^n) = \sum_{k=0}^{n-1} \binom{n-1+k}{k} \frac{1}{2^{n+k-1}} \left( \frac{\sigma_{-}}{\sigma_{+}+\sigma_{-}} \right)^{n-k}.
\end{equation*}
\end{theorem}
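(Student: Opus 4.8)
The plan is to apply Theorem~\ref{thm_rec} directly: I would compute the first moment from~\eqref{eq_E0A1} and the coefficients $D_k(\lambda)$ from~\eqref{eq_Ck} for the oscillating Brownian motion, and then observe that both coincide with the corresponding quantities for a skew Brownian motion with $\beta = \sigma_-/(\sigma_++\sigma_-)$. Once this is established, the recursion~\eqref{eq_E0A1n} is exactly the one solved in Theorem~\ref{thm_skewmom}, so the asserted formula follows by substituting this value of~$\beta$ into~\eqref{eq_skewmom}. Here $I^+ = [0,\infty)$, the Green kernel $\widetilde{G}_\lambda(0,y)$ is given by~\eqref{eq_oscG}, the speed measure is $\widetilde{m}(\diff y) = (2/\sigma_+^2)\diff y$ on $\RR_+$, and from~\eqref{eq_Lhitting} together with $\widetilde{\phi}_\lambda(y) = \ee^{-y\sqrt{2\lambda}/\sigma_+}$ and $\widetilde{\phi}_\lambda(0) = 1$ one reads off $\widehat{f}(y;\lambda) = \ee^{-y\sqrt{2\lambda}/\sigma_+}$ for $y\geq 0$.

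First I would compute $\EE_0(A_1)$ from~\eqref{eq_E0A1}: inserting $\widetilde{G}_\lambda(0,y)$ and $\widetilde{m}$ reduces matters to an elementary exponential integral, and a short calculation yields $\EE_0(A_1) = \sigma_-/(\sigma_++\sigma_-)$, which matches the skew Brownian value $\beta$ with this choice of~$\beta$.

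The key step is the evaluation of $D_k(\lambda)$. In the integral~\eqref{eq_Ck} I would perform the change of variables $z = y/\sigma_+$. This turns the exponent of $\widehat{f}$ into $\ee^{-z\sqrt{2\lambda}}$, which is precisely the hitting-time transform of a skew Brownian motion; crucially, since the substitution removes all $\sigma_+$-dependence from the exponent, the $\lambda$-derivatives $\widehat{f}_\lambda^{(k-1)}$ transform transparently into the corresponding skew Brownian derivatives in the variable~$z$. Collecting the constant factors contributed by $\widetilde{G}_\lambda$, by $\widetilde{m}$, and by the Jacobian, the integrand becomes $\sigma_-/(\sigma_++\sigma_-)$ times the skew Brownian integrand with $\beta$ replaced by this same constant. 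Hence $D_k(\lambda)$ equals the skew Brownian value~\eqref{SBMDA} with $\beta = \sigma_-/(\sigma_++\sigma_-)$; in particular it does not depend on~$\lambda$, consistently with the self-similarity of the oscillating Brownian motion.

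With $\EE_0(A_1)$ and every $D_k(\lambda)$ agreeing with those of a skew Brownian motion of parameter $\beta = \sigma_-/(\sigma_++\sigma_-)$, the recursion~\eqref{eq_E0A1n} coincides with the one solved in Theorem~\ref{thm_skewmom}. Therefore $\EE_0(A_1^n)$ is given by~\eqref{eq_skewmom} with this value of~$\beta$, which is exactly the claimed expression. The only point requiring care is checking that the $\lambda$-differentiation commutes with the change of variables; but since $z = y/\sigma_+$ is a fixed linear substitution independent of~$\lambda$, this is immediate. Alternatively, \eqref{oscscaling} already identifies the law of $A_1$ with that of the skew Brownian motion, but the present argument has the virtue of illustrating Theorem~\ref{thm_rec} directly, which is the stated purpose here.
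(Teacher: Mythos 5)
Your proposal is correct and follows essentially the same route as the paper's own proof: both apply Theorem~\ref{thm_rec}, compute $\EE_0(A_1)=\sigma_-/(\sigma_++\sigma_-)$ from~\eqref{eq_E0A1}, reduce the integral in~\eqref{eq_Ck} via the substitution $z=y/\sigma_+$ to the skew Brownian case with $\beta=\sigma_-/(\sigma_++\sigma_-)$, and then invoke Theorem~\ref{thm_skewmom}. Your explicit remark that the substitution commutes with the $\lambda$-differentiation (being a fixed linear change of variables independent of $\lambda$) is a point the paper leaves implicit, but otherwise the arguments coincide.
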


\begin{proof}
For oscillating Brownian motion,
\begin{equation*}
\widehat{f}(x;\lambda) = \EE_x(\ee^{-\lambda H_0}) = \ee^{-\frac{x\sqrt{2\lambda}}{\sigma_+}}, \quad x\geq 0. 
\end{equation*}
Recalling~\eqref{eq_oscG}, Equation~\eqref{eq_Ck} then becomes
\begin{align*}
\widetilde{D}_k(\lambda) 
&= \frac{(-\lambda)^k}{(k-1)!} \int_0^\infty \frac{\sigma_+ \sigma_-}{(\sigma_+ + \sigma_-)\sqrt{2\lambda}} \ee^{-\frac{y\sqrt{2\lambda}}{\sigma_+}} \frac{\diff^{k-1}}{\diff \lambda^{k-1}}\biggl( \ee^{-\frac{y\sqrt{2\lambda}}{\sigma_+}} \biggr) \frac{2}{\sigma_{+}^2} \diff y \\
&= \frac{2 \sigma_-}{\sigma_+ + \sigma_-} \frac{(-\lambda)^k}{(k-1)!} \int_0^\infty \frac{1}{2\sqrt{2\lambda}} \ee^{-z\sqrt{2\lambda}} \frac{\diff^{k-1}}{\diff \lambda^{k-1}} \bigl( \ee^{-z\sqrt{2\lambda}} \bigr) 2 \diff z \\
&= \frac{\sigma_-}{\sigma_+ + \sigma_-} \frac{1}{\beta} D_k(\lambda),
\end{align*}
where $D_k(\lambda)$ is as for skew Brownian motion. In other words, by~\eqref{SBMDA},
\begin{equation*}
\widetilde{D}_k(\lambda) = \left(\frac{\sigma_-}{\sigma_+ + \sigma_-}\right) \frac{-1}{2^{2k}(2k-1)} \binom{2k}{k}. 
\end{equation*}
Since
\begin{equation*}
\EE_0(A_1) = \lambda\int_0^\infty \widetilde{G}_\lambda(0,y) m_\nu(\diff y) = \frac{\sigma_-}{\sigma_+ + \sigma_-},
\end{equation*}
we thus arrive at the same recursion as in~\eqref{eq_skewrec}, except with $\sigma_{-}/(\sigma_{+}+\sigma_{-})$ instead of~$\beta$. Thus, the result follows from Theorem~\ref{thm_skewmom}. 
\hfill\end{proof}

\subsection{Brownian spider}

For a positive integer $n$, let $I_1,\dotsc,I_n$ be half lines in $\RR^2$ meeting at the origin. Such a configuration can be seen as a graph $G$, say, with one vertex and $n$ infinite edges. A Brownian spider, also called Walsh's Brownian motion on a finite number of rays, see~\cite{Barlow2,Barlow,Lejay,Vakeroudis,Walsh}, is a diffusion on $G$ such that when away from the origin on a ray and before hitting the origin, it behaves on that ray like an ordinary one-dimensional Brownian motion, but as the process reaches the origin one of the half-lines is chosen randomly for its ``next'' excursion. A rigorous construction of the probability measure governing a Brownian spider can be done using the excursion theory, see~\cite{Barlow}. 

Let $(X_t)_{t\geq 0}$ denote a Brownian spider living on $G$, and for every $i\in\{1,2,...n\}$ let $p_i$ be the probability for choosing half-line $I_i$ when at the origin. For any subset $\mathcal{I}\subseteq \{1,2,\dotsc,n\}$ consider the occupation time of~$X$ on $\{I_i:\, i\in \mathcal{I}\},$ i.e., 
\begin{equation*}
A_t^\mathcal{I} = \Leb \{s\in[0,t]: X_s\in \bigcup_{i\in \mathcal{I}} I_i\}.
\end{equation*}
From the excursion theoretical construction of the Brownian spider it can be deduced that $A_t^\mathcal{I}$ has the same law as the occupation time on $\RR_+$ for a skew Brownian motion with the skewness parameter $\beta := \sum_{i\in \mathcal{I}} p_i$. Thus, we deduce the following result from~\eqref{eq_skewmom}. 

\begin{theorem}[Brownian spider]
For any $n\geq 1$,
\begin{equation*}
\EE_0((A_1^\mathcal{I})^n) = \sum_{k=0}^{n-1} \binom{n-1+k}{k} \frac{1}{2^{n+k-1}} \bigg( \sum_{i\in \mathcal{I}} p_i \bigg)^{n-k}.
\end{equation*}
\end{theorem}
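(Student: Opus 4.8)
The plan is to reduce the claim entirely to the skew Brownian motion case already settled in Theorem~\ref{thm_skewmom}, exploiting the excursion-theoretic description of the Brownian spider. The key observation is that the occupation time $A_t^\mathcal{I}$ is insensitive to the internal labeling of the rays: it records only whether the process is currently on a ray belonging to $\mathcal{I}$ or not. Thus I expect the entire content of the proof to lie in establishing the distributional identity
\begin{equation*}
A_t^\mathcal{I} \overset{(d)}{=} A_t^X,
\end{equation*}
where $X$ is a skew Brownian motion with skewness parameter $\beta := \sum_{i\in\mathcal{I}} p_i$, after which the moment formula is immediate.

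First I would recall the excursion-theoretic construction of both processes. For the Brownian spider, the excursions away from the origin are governed by the It\^o excursion measure of standard Brownian motion, and each excursion is independently assigned to ray $I_i$ with probability $p_i$, with the excursion's length and shape independent of this choice. For a skew Brownian motion (see Example~\ref{ex_skewBM}), the excursions are again standard Brownian excursions, each independently declared positive with probability $\beta$ and negative with probability $1-\beta$. Since $A_t^\mathcal{I}$ accumulates exactly the total length of those excursions lying on rays indexed by $\mathcal{I}$, and an excursion lands on such a ray with probability $\sum_{i\in\mathcal{I}} p_i = \beta$ independently of its length, the pair (excursion lengths, membership in $\mathcal{I}$) has the same joint law as the pair (excursion lengths, positivity) for the skew Brownian motion. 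Because $A_t^\mathcal{I}$ and $A_t^X$ are identical measurable functionals of these data, the two occupation times coincide in law for every fixed $t$; in particular this holds at $t=1$.

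With the reduction in hand, I would simply invoke Theorem~\ref{thm_skewmom} with $\beta = \sum_{i\in\mathcal{I}} p_i$, giving
\begin{equation*}
\EE_0\bigl((A_1^\mathcal{I})^n\bigr) = \EE_0\bigl((A_1^X)^n\bigr) = \sum_{k=0}^{n-1} \binom{n-1+k}{k} \frac{\beta^{n-k}}{2^{n+k-1}},
\end{equation*}
and substituting $\beta = \sum_{i\in\mathcal{I}} p_i$ yields the stated formula.

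The one genuine obstacle is the rigorous justification of the distributional identity: one must verify that grouping the ray-selection probabilities into a single Bernoulli mark with parameter $\beta$ does not lose information relevant to $A_t^\mathcal{I}$, and that the length distribution of an excursion is indeed independent of the selected ray (equivalently, of the sign in the skew Brownian case). This is standard in the excursion theory of Walsh's Brownian motion but is the step where care is needed; everything downstream is a direct application of the already-proved moment formula.
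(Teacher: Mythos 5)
Your proposal matches the paper's own argument: the paper likewise deduces from the excursion-theoretic construction that $A_t^\mathcal{I}$ has the same law as the occupation time on $\RR_+$ of a skew Brownian motion with skewness parameter $\beta = \sum_{i\in\mathcal{I}} p_i$, and then invokes Theorem~\ref{thm_skewmom}. In fact you spell out the identification of excursion marks (ray choice versus sign) in more detail than the paper, which simply asserts the reduction, so your write-up is a faithful and slightly more explicit version of the same proof.
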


We refer also to the recent paper \cite{Yano} for results concerning distributions of occupation times for diffusions on a multiray.

\subsection{Sticky Brownian motion}

In this section we highlight the use of the moment formula for a process that does not have the scaling property. Moreover, we wish to understand how the presence of a sticky point affects the formula. To this end, let $X=(X_t)_{t\geq 0}$ be a Brownian motion sticky at~0 with stickyness parameter $\gamma>0$, see Example~\ref{ex_sticky}. 
Since $X$ behaves like a standard Brownian motion on excursions from~0, the Laplace transform of the first hitting time of~0 when $X_0=x>0$ is as for standard Brownian motion, i.e., 
\begin{equation*}
\widehat{f}(x;\lambda) :=\EE_x\left(\ee^{-\lambda H_0}\right)= \ee^{-x\sqrt{2\lambda}}. 
\end{equation*}
In this section, besides $A^X_t$ defined in~\eqref{def_At}, we also consider 
\begin{equation*}
B_t^X := \Leb \{s\in[0,t]:X_s>0\}.
\end{equation*}
Clearly,
\begin{equation}\label{A_B}
A_t =B_t + \Leb \{s\in[0,t]:X_s=0\},
\end{equation} 
where $A_t$ and $B_t$ are used as notation for~$A^X_t$ and $B^X_t,$ respectively. 
Since~0 is a sticky point the second term on the right hand side of~\eqref{A_B} is strictly positive for all~$t>0$~a.s. (if~$X$ starts at~0). We use here the notation $D^A_n(\lambda)$ instead of~$D_n(\lambda)$ as defined in~\eqref{eq_Ck}:
\begin{equation*}
D^A_n(\lambda) := \frac{(-\lambda)^n}{(n-1)!} \intco{0}{\infty} G_\lambda(0,y) \widehat{f}_\lambda^{(n-1)}(y;\lambda) m(\diff y).
\end{equation*} 
Using the explicit form~\eqref{green0} for the Green kernel yields for~$n=1$
\begin{align*}
D^A_1(\lambda) &= -2\lambda \left(\int_0^\infty G_\lambda(0,y) \widehat{f}_\lambda(y;\lambda) \diff y + \gamma\, G_\lambda(0,0) \widehat{f}_\lambda(0;\lambda) \right) \\
&=\frac{-2\lambda}{2 \sqrt{2\lambda}+2\lambda\gamma} \left(\frac{1}{2\sqrt{2\lambda}}+\gamma\right) \\
&= - H(\lambda)\left(\frac{1}{2}+\gamma\sqrt{2\lambda}\right),
\end{align*} 
where 
\begin{equation}\label{Hll}
H(\lambda):=\left(2 + \gamma\sqrt{2\lambda}\right)^{-1}.
\end{equation}
Comparing the expressions for the Green kernels in~\eqref{SBMG} and~\eqref{green0} and recalling~\eqref{SBMDA}, as well as Lemma~\ref{lemma_f0}, it is seen that for~$n\geq 2$
\begin{align*}
D^A_n(\lambda) &= \frac{2(-\lambda)^n}{(n-1)!} \int_{(0,\infty)} G_\lambda(0,y) \widehat{f}_\lambda^{(n-1)}(y;\lambda) \diff y\\
& = \frac{-1}{2^{2n}(2n-1)} \binom{2n}{n}\, \frac{\sqrt{2\lambda}}{2\sqrt{2\lambda} +2\gamma\lambda}\\
& =- H(\lambda)\, T_n
\end{align*}
where 
\begin{equation}\label{Tnn}
T_n:= \frac{1}{2^{2n}(2n-1)} \binom{2n}{n} 
\end{equation}
The values for $D^B_n(\lambda)$ are obtained in the same way, and we conclude the discussion above in the following result. 

\begin{proposition}
It holds that
\begin{equation*}
 D^A_1(\lambda) = - \frac{H(\lambda)}{2}\left(1+2\gamma\sqrt{2\lambda}\right),\qquad D^B_1(\lambda) = - \frac{H(\lambda)}{2} = - H(\lambda)\, T_1, 
\end{equation*}
and, for~$n\geq2$,
\begin{equation*}
D^A_n(\lambda) = D^B_n(\lambda) = - H(\lambda)\, T_n,
\end{equation*} 
with $H(\lambda)$ and $T_n$ given in~\eqref{Hll} and~\eqref{Tnn}, respectively. 
\end{proposition}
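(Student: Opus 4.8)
The plan is to reduce everything to the corresponding computation for skew Brownian motion, treating the Dirac atom $2\gamma\varepsilon_{\{0\}}$ in the speed measure as the only feature that distinguishes the sticky case. The two functionals differ precisely in whether this atom is seen: $A_t$ uses the potential $\idop_{[0,\infty)}$ and hence integrates against $m(\diff y)=2\,\diff y+2\gamma\varepsilon_{\{0\}}(\diff y)$ over $[0,\infty)$, whereas $B_t$ uses $\idop_{(0,\infty)}$ and integrates only against the Lebesgue part $2\,\diff y$ over $(0,\infty)$.

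First I would split off the atom. From the definition of $D^A_n$ together with $m(\diff y)=2\,\diff y+2\gamma\varepsilon_{\{0\}}(\diff y)$ one gets
\begin{equation*}
D^A_n(\lambda)=D^B_n(\lambda)+\frac{(-\lambda)^n}{(n-1)!}\,2\gamma\,G_\lambda(0,0)\,\widehat{f}_\lambda^{(n-1)}(0;\lambda),
\end{equation*}
where $D^B_n(\lambda)$ denotes the integral against $2\,\diff y$ alone. The key observation is that Lemma~\ref{lemma_f0} forces $\widehat{f}_\lambda^{(n-1)}(0;\lambda)=0$ whenever $n-1\geq 1$, so the atom contributes nothing for $n\geq 2$ and $D^A_n(\lambda)=D^B_n(\lambda)$ there. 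Only for $n=1$, where $\widehat{f}(0;\lambda)=1$, does the atom survive.

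To evaluate $D^B_n(\lambda)$ I would compare with skew Brownian motion rather than integrate from scratch. Since $\widehat{f}(x;\lambda)=\ee^{-x\sqrt{2\lambda}}$ is common to both processes and, by~\eqref{SBMG} and~\eqref{green0}, the sticky Green kernel on $(0,\infty)$ equals $2H(\lambda)$ times the skew-Brownian one, the sticky integral is a constant multiple of the skew-Brownian integral once the speed densities $2\,\diff y$ and $4\beta\,\diff y$ are reconciled. Feeding in the known value $-\beta T_n$ from~\eqref{SBMDA} and carrying the constants through yields $D^B_n(\lambda)=-H(\lambda)\,T_n$, with the $\beta$-dependence cancelling as it must; in particular $D^B_1(\lambda)=-H(\lambda)/2=-H(\lambda)T_1$.

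It remains to add back the atom for $n=1$. Using $G_\lambda(0,0)=H(\lambda)/\sqrt{2\lambda}$ from~\eqref{green0} and $\widehat{f}(0;\lambda)=1$, the extra term is $-2\lambda\gamma\,G_\lambda(0,0)=-\gamma\sqrt{2\lambda}\,H(\lambda)$, and adding this to $D^B_1(\lambda)$ gives $D^A_1(\lambda)=-\tfrac{H(\lambda)}{2}\bigl(1+2\gamma\sqrt{2\lambda}\bigr)$. The individual steps are elementary integrations; the one genuine ingredient is Lemma~\ref{lemma_f0}, and the main point to get right is the bookkeeping that separates the atom from the Lebesgue part, since it is exactly this separation that localizes the effect of stickiness to the first coefficient.
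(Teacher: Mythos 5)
Your proposal is correct and follows essentially the same route as the paper: split off the Dirac atom at~$0$, kill it for $n\geq 2$ via Lemma~\ref{lemma_f0}, evaluate the Lebesgue part by comparing the sticky Green kernel~\eqref{green0} with the skew Brownian one~\eqref{SBMG} and invoking~\eqref{SBMDA}, and treat the surviving atom explicitly for $n=1$. All constants check out (the ratio $2H(\lambda)$ of Green kernels, the cancellation of $\beta$, and the atom contribution $-\gamma\sqrt{2\lambda}\,H(\lambda)$), so there is nothing to correct.
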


Recall the recursive equation~\eqref{eq_Ahat_rec} for the Laplace transforms of the moments of~$B_t$ (an analogous formula holds for~$A_t$):
\begin{equation}\label{Amom}
\widehat B_n(\lambda) := \widehat{B}_0(\lambda;n) = \frac{n!}{\lambda^{n-1}} \widehat B_1(\lambda) + \frac{n!}{\lambda^{n+1}}\sum_{k=1}^{n-1} \left( 1 - \frac{\lambda^{n-k+1}}{(n-k)!} \widehat B_{n-k}(\lambda) \right) D^B_{k}(\lambda).
\end{equation}
Next we show that this recursive equation can be solved similarly as in the case of skew Brownian motion. The corresponding equation for the Laplace transforms of~$A_t$ does not seem to allow such a simple solution. 
 
\begin{proposition}
For $n=1,2,\dotsc$,
\begin{equation}\label{Amom03}
\widehat B_n(\lambda) = \frac{n!}{\lambda^{n+1}} \sum_{k=0}^{n-1} \binom{n-1+k}{k} \frac{1}{2^{n+k-1}} \left( \frac{1}{2+\gamma\sqrt{2\lambda}} \right)^{n-k}.
\end{equation}
\end{proposition}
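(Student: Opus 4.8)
The plan is to reduce the claim to the already–solved skew Brownian motion case by a single parameter substitution. Introducing the normalized quantities $U_n^B(\lambda) := \frac{\lambda^{n+1}}{n!}\,\widehat B_n(\lambda)$ as in Remark~\ref{rem_U}, the recursion~\eqref{Amom} takes the form
\[
U_n^B(\lambda) = U_1^B(\lambda) + \sum_{k=1}^{n-1}\bigl(1 - U_{n-k}^B(\lambda)\bigr)\,D_k^B(\lambda).
\]
This is a purely algebraic recursion that determines $U_n^B(\lambda)$ uniquely from the single initial value $U_1^B(\lambda)$ and the sequence of coefficients $D_k^B(\lambda)$. I would therefore isolate those two ingredients and match them against the corresponding data for skew Brownian motion.

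First I would compute $U_1^B$. Applying the $n=1$ instance of Kac's formula for $B_t$ (the analogue of~\eqref{eq_n1} with $V=\idop_{(0,\infty)}$) together with the Green kernel~\eqref{green0} gives
\[
\widehat B_1(\lambda) = \frac{1}{\lambda}\int_{(0,\infty)} G_\lambda(0,y)\,m(\diff y) = \frac{1}{\lambda}\cdot\frac{1}{\lambda\,(2+\gamma\sqrt{2\lambda})} = \frac{H(\lambda)}{\lambda^2},
\]
so that $U_1^B(\lambda)=H(\lambda)$; here the Dirac mass of $m$ at~0 drops out because the integration is over $(0,\infty)$. Second, the Proposition proved just above already supplies $D_k^B(\lambda) = -H(\lambda)\,T_k$ for every $k\geq 1$ (in particular $D_1^B(\lambda) = -H(\lambda)/2 = -H(\lambda)\,T_1$, since $T_1=\tfrac12$).

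Finally I would invoke the skew Brownian motion computation. By the scaling property there one has $U_n = \EE_0(A_1^n)$, with $U_1 = \beta$ and, by~\eqref{SBMDA}, $D_k(\lambda) = -\beta\,T_k$. Hence the sticky-motion recursion for $U_n^B$ coincides with the skew-motion recursion for $U_n$ under the formal replacement $\beta \mapsto H(\lambda)$: same initial value, same coefficients, same recursive law. Since the recursion produces $U_n$ as one fixed universal polynomial in that parameter (with the numerical $T_k$ as coefficients), the explicit solution~\eqref{eq_skewmom} of Theorem~\ref{thm_skewmom} carries over verbatim, giving
\[
U_n^B(\lambda) = \sum_{k=0}^{n-1}\binom{n-1+k}{k}\frac{H(\lambda)^{\,n-k}}{2^{n+k-1}}.
\]
Multiplying through by $\frac{n!}{\lambda^{n+1}}$ and substituting $H(\lambda) = (2+\gamma\sqrt{2\lambda})^{-1}$ then yields~\eqref{Amom03}. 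The one delicate point — and the main obstacle — is to make precise that the substitution $\beta\mapsto H(\lambda)$ is legitimate, namely that both sequences genuinely solve the \emph{same} recursion with the \emph{same} data; once the value $U_1^B(\lambda)=H(\lambda)$ and the identity $D_k^B(\lambda) = -H(\lambda)\,T_k$ are in hand, this is immediate and no further calculation is required.
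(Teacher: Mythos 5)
Your proposal is correct and follows essentially the same route as the paper's own proof: both introduce the normalized quantities $U_n(\lambda)=\lambda^{n+1}\widehat B_n(\lambda)/n!$, compute $U_1(\lambda)=H(\lambda)$ via Kac's formula and the Green kernel (with the atom of $m$ at~0 dropping out), insert $D_k^B(\lambda)=-H(\lambda)T_k$ to recognize the recursion as the skew Brownian motion recursion~\eqref{eq_skewrec} with $\beta$ replaced by $H(\lambda)$, and then import the closed form from Theorem~\ref{thm_skewmom}. Your extra remark justifying the substitution (the recursion determines $U_n$ as a fixed polynomial in the parameter, and $H(\lambda)\in(0,1/2)\subset(0,1)$) only makes explicit what the paper states as ``clearly of the same form,'' so there is no gap.
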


\begin{proof}
Introducing $U_k(\lambda) := \lambda^{k+1}\widehat{B}_k(\lambda)/k!$ as in Remark~\ref{rem_U}, equation~\eqref{Amom} can be rewritten as
 \begin{align}\label{Amom01}
U_n(\lambda) &= U_1(\lambda) + \sum_{k=1}^{n-1} (1 - U_{n-k}(\lambda)) (- H(\lambda)T_k) \nonumber\\
&= H(\lambda) \Biggl( 1 - \sum_{k=1}^{n-1} (1 - U_{n-k}(\lambda)) T_k \Biggr),
\end{align}
since
\begin{equation*}
U_1(\lambda) = \lambda^2 \widehat{B}_1(\lambda) = \lambda \int_0^\infty G_\lambda(0,y) m(\diff y) = H(\lambda).
\end{equation*} 
Clearly, \eqref{Amom01}~is of the same form as~\eqref{eq_skewrec} and, consequently, by Theorem~\ref{thm_skewmom}, 
\begin{equation}\label{eq_U1}
U_n(\lambda) = \sum_{k=0}^{n-1} \binom{n-1+k}{k} \frac{(H(\lambda))^{n-k}}{2^{n+k-1}},
\end{equation}
which is the same as
\begin{equation*}
\widehat B_n(\lambda) = \frac{n!}{\lambda^{n+1}} \sum_{k=0}^{n-1} \binom{n-1+k}{k} \frac{(H(\lambda))^{n-k}}{2^{n+k-1}},
\end{equation*}
and the claimed formula~\eqref{Amom03} follows. 
\hfill\end{proof}

Consider now a regular diffusion $X=(X_t)_{t\geq 0}$ as introduced in Section~\ref{section_prelim}. In~\cite[p.~161]{Watanabe} are given necessary and sufficient conditions in terms of the speed measure of~$X$ that ensure that $A^X_t/t$ converges in distribution to a random variable $\xi$ which is Lamperti-distributed, i.e., the density of~$\xi$ is given by~\eqref{eq_besdens}. It is a fairly simple matter to check these conditions for a sticky Brownian motion with the limiting random variable being then arcsine-distributed. We conclude the paper by showing that this result is also easily obtained for both $A_t/t$ and $B_t/t$ using the recursive equation. Notice that in this case the convergence of the moments is equivalent to the convergence in distribution. 

\begin{proposition}
For $n=1,2,\dotsc$ 
\begin{equation}\label{lim11}
\lim_ {\lambda\to 0} \lambda^{n+1}\widehat A_n(\lambda)) =\lim_ {\lambda\to 0} \lambda^{n+1}\widehat B_n(\lambda)) = \frac{n!}{2^{2n}} \binom{2n}{n}
\end{equation}
and 
\begin{equation}\label{lim12}
\lim_{t\to\infty} t^{-n}\EE_0(A_t^n) = \lim_{t\to\infty} t^{-n}\EE_0(B_t^n) = \frac{1}{2^{2n}} \binom{2n}{n}.
\end{equation}
\end{proposition}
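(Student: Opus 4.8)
The plan is to prove the Laplace-transform asymptotics \eqref{lim11} first and then transfer them to the large-time moment asymptotics \eqref{lim12} by a Tauberian argument. For $B_t$ the first identity follows at once from the explicit formula \eqref{Amom03}: multiplying by $\lambda^{n+1}$ and letting $\lambda\downarrow0$, the factor $H(\lambda)=(2+\gamma\sqrt{2\lambda})^{-1}$ of \eqref{Hll} tends to $1/2$, so that
\[
\lim_{\lambda\to0}\lambda^{n+1}\widehat B_n(\lambda)=\frac{n!}{2^{2n-1}}\sum_{k=0}^{n-1}\binom{n-1+k}{k}.
\]
By \eqref{eq_Gsum} the sum equals $\binom{2n-1}{n-1}$, and since $\binom{2n-1}{n-1}=\tfrac12\binom{2n}{n}$ this is $\tfrac{n!}{2^{2n}}\binom{2n}{n}$, as claimed.

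For $A_t$ there is no such closed form, so I would instead pass to the limit inside the recursion. Writing $U_n(\lambda):=\lambda^{n+1}\widehat A_n(\lambda)/n!$ as in Remark~\ref{rem_U}, the recursion reads $U_n(\lambda)=U_1(\lambda)+\sum_{k=1}^{n-1}(1-U_{n-k}(\lambda))D_k^A(\lambda)$. Here $U_1(\lambda)=\lambda\int_{[0,\infty)}G_\lambda(0,y)\,m(\diff y)$, where the atom of $m$ at $0$ contributes the term $\gamma\lambda/(\sqrt{2\lambda}+\gamma\lambda)$, which tends to $0$, so $U_1(\lambda)\to1/2$. By the preceding Proposition, $D_1^A(\lambda)=-\tfrac{H(\lambda)}{2}(1+2\gamma\sqrt{2\lambda})\to-\tfrac14$ and $D_k^A(\lambda)=-H(\lambda)T_k\to-\tfrac12T_k$ for $k\ge2$, with $T_k$ from \eqref{Tnn}; thus uniformly $\lim_{\lambda\to0}D_k^A(\lambda)=-\tfrac12T_k$ for every $k\ge1$. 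Since the sum is finite, strong induction on $n$ lets me take the limit term by term, and $\lim_{\lambda\to0}U_n(\lambda)$ satisfies exactly the recursion \eqref{eq_E0A1n} of a skew Brownian motion with $\beta=1/2$, for which $\EE_0(A_1)=1/2$ and, by \eqref{SBMDA}, $D_k=\beta\binom{k-3/2}{k}=-\tfrac12T_k$. Hence $\lim_{\lambda\to0}U_n(\lambda)=\tfrac1{2^{2n}}\binom{2n}{n}$ by Corollary~\ref{cor11}, giving \eqref{lim11} for $A$; the identical collapse of the limiting coefficients re-derives the same value for $B$.

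Finally I would deduce \eqref{lim12} from \eqref{lim11} by a Tauberian theorem. The map $t\mapsto\EE_0(A_t^n)$ is nonnegative and nondecreasing, because $A_t$ is nondecreasing in $t$ pathwise, and the same holds for $B_t$. Since $\widehat A_n(\lambda)=\int_0^\infty \ee^{-\lambda t}\EE_0(A_t^n)\,\diff t\sim\tfrac{n!}{2^{2n}}\binom{2n}{n}\,\lambda^{-(n+1)}$ as $\lambda\downarrow0$, Karamata's Tauberian theorem applied to $U(t):=\int_0^t\EE_0(A_s^n)\,\diff s$ yields $U(t)\sim\tfrac1{(n+1)2^{2n}}\binom{2n}{n}\,t^{n+1}$, and the monotone density theorem (applicable since $\EE_0(A_t^n)$ is monotone) then gives $\EE_0(A_t^n)\sim\tfrac1{2^{2n}}\binom{2n}{n}\,t^n$, which is \eqref{lim12}; the argument for $B_t$ is verbatim the same. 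As the limiting values $\tfrac1{2^{2n}}\binom{2n}{n}$ are precisely the arcsine moments of Corollary~\ref{cor11} and $A_t/t,B_t/t\in[0,1]$ are bounded, moment convergence is equivalent to weak convergence, so both $A_t/t$ and $B_t/t$ converge in distribution to the arcsine law.

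The main obstacle I anticipate is this last transfer: passing from the one-sided Laplace asymptotics \eqref{lim11} to the genuine large-time behaviour \eqref{lim12} requires a Tauberian (not merely Abelian) statement, whose hypotheses must be checked, and here the nonnegativity and monotonicity of $t\mapsto\EE_0(A_t^n)$ supply exactly the regularity needed to invoke Karamata's theorem together with the monotone density theorem. A secondary point of care is the verification in the $A$-recursion that the sticky atom at $0$ contributes vanishing corrections both to $U_1$ and to $D_1^A$, so that the $A$- and $B$-recursions indeed degenerate, in the limit $\lambda\downarrow0$, to the same standard-Brownian recursion solved in Corollary~\ref{cor11}.
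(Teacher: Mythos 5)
Your proof is correct, and its overall skeleton (first the Laplace-transform limits, then a Tauberian transfer) is the same as the paper's; the differences lie in how the two cases are executed. The paper proves~\eqref{lim11} for $B_t$ by induction on the recursion~\eqref{Amom01}: the induction hypothesis justifies passing to the limit $\lambda\to 0$ term by term, yielding~\eqref{Amom2}, which is then recognized as the standard Brownian ($\beta=1/2$) recursion~\eqref{eq_skewrec} solved by Corollary~\ref{cor11}; the case of $A_t$ is declared ``analogous'' with details omitted. You instead dispatch $B_t$ in one line from the already-proved closed form~\eqref{Amom03}, using $H(\lambda)\to 1/2$ together with~\eqref{eq_Gsum} and $\binom{2n-1}{n-1}=\tfrac12\binom{2n}{n}$ --- a cleaner route, since the closed form is available --- and you then carry out the recursion-plus-induction limit argument precisely for $A_t$, where no closed form exists: you check that the sticky atom contributes the vanishing term $\gamma\lambda/(\sqrt{2\lambda}+\gamma\lambda)$ to $U_1$ and that $\lim_{\lambda\to 0}D_k^A(\lambda)=-\tfrac12 T_k$ for all $k\ge 1$ (including $k=1$, where the extra factor $1+2\gamma\sqrt{2\lambda}\to 1$), so the limiting recursion is exactly the $\beta=1/2$ skew Brownian one. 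In effect you supply the details the paper omits, which is where the real content of the $A$-case lies. For the transfer~\eqref{lim11}$\Rightarrow$\eqref{lim12}, the paper cites Feller's Tauberian theorem for monotone densities; your Karamata-plus-monotone-density argument is the same theorem unpacked, and you correctly identify the needed regularity (pathwise monotonicity of $t\mapsto A_t$, hence of $t\mapsto\EE_0(A_t^n)$). Both approaches thus buy the same result; yours is slightly more economical on the $B$-side and more complete on the $A$-side.
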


\begin{proof}
We prove~\eqref{lim11} for~$B_t$ with induction from~\eqref{Amom}. An analogous reasoning is valid for~$A_t$ and, hence, the details are omitted. The claim for~$B_t$ holds for~$n=1$, as is easily seen from~\eqref{eq_U1}. Multiplying~\eqref{Amom01} with $n!$ yields 
\begin{equation*}
\lambda^{n+1}\widehat B_n(\lambda) = {n!}\left( H(\lambda) - H(\lambda) \sum_{k=1}^{n-1} \left( 1 - \frac{\lambda^{n-k+1}}{(n-k)!} \widehat B_{n-k}(\lambda) \right) T_{k}\right). 
\end{equation*}
By the induction assumption the limit of the right hand side exists as $\lambda\to 0$, implying
\begin{equation}\label{Amom2}
\lim_{\lambda\to 0} \lambda^{n+1}\widehat B_n(\lambda) = {n!}\left(\frac{1}{2} - \frac{1}{2} \sum_{k=1}^{n-1} 
\left( 1 - \frac{1}{2^{2k}} \binom{2k}{k} \right) T_{n-k}\right). 
\end{equation}
From equation~\eqref{eq_skewrec} and Corollary~\ref{cor11} it is seen that the expression in the outer parenthesis of~\eqref{Amom2} is as claimed in~\eqref{lim11}. The statement~\eqref{lim12} follows by evoking the Tauberian theorem presented in~\cite[p~423]{Feller}, which is applicable since $t\mapsto \EE_0(B_t^n)$ is increasing. 
\hfill\end{proof}

\appendix
\section[Appendix]{Appendix: Proof of Lemma~\ref{lemma_f0}}

Our proof is based on the spectral representation of the $\PP_x$-density~$f$ of~$H_0$. To fix ideas let $x>0$. The main source of the proof is \cite{Salminen}, where also further references and results can be found. The key fact presented in~\cite[Prop.~3.3]{Salminen} is that $f$ has the representation
\begin{equation*}
f(x;t) = \int_0^\infty \ee^{-\gamma t} C(x;\gamma) \widehat{\Delta}(\diff \gamma),
\end{equation*}
where 
\begin{equation*}
C(x;\gamma) := \sum_{n=0}^\infty (-\gamma)^n c_n(x),
\end{equation*}
with
\begin{equation}
\label{cn}
c_0(x): = S(x), \quad
c_{n+1}(x) := \int_0^x \diff S(y) \int_0^y m(\diff z) c_n(z).
\end{equation}
Recall that $m$ is the speed measure and $S$ is the scale function such that $S(0)=0$. The notation $\widehat{\Delta}$ stands for a $\sigma$-finite measure on $(0,\infty)$ such that
\begin{equation}
\label{delta}
\int_0^\infty \frac{\widehat{\Delta}(\diff z)}{z(z+1)} < \infty.
\end{equation}
From~\eqref{cn} it is seen that $x\mapsto c_n(x), n=1,2,\dotsc$ are positive, continuous and increasing with $\lim_{x\downarrow 0} c_n(x) = 0$. Hence, 
\begin{align}
\lim_{x\downarrow 0} C(x;\gamma) &= \lim_{x\downarrow 0} \sum_{n=0}^\infty (-\gamma)^n c_n(x) 
= \sum_{n=0}^\infty (-\gamma)^n \lim_{x\downarrow 0} c_n(x) = 0, \label{eq_Cto0}
\end{align}
where in the second step the use of dominated convergence is justified by the estimate in~\cite[Eq.~(3.12)]{Salminen}, i.e., for~$x<1$
\begin{align*}
\Bigl|\sum_{n=0}^\infty (-\gamma)^n c_n(x)\Bigr| \leq \sum_{n=0}^\infty \gamma^n c_n(1)
&\leq \sum_{n=0}^\infty \gamma^n \frac{1}{n!}\, S(1) \left( \int_0^1 m(y) \diff S(y) \right)^n \\
&= S(1) \exp\big({\gamma \int_0^1 m(y) \diff S(y)}\big) < \infty. 
\end{align*}
Consider now for~$k=1,2,\dots$ 
\begin{align*}
\bigl| \widehat{f}_\lambda^{(k)}(x;\lambda) \bigr| &= \EE_x(H_0^k \ee^{-\lambda H_0}) \\
&= \int_0^\infty t^k \ee^{-\lambda t} f(x;t) \diff t \\
&= \int_0^\infty t^k \ee^{-\lambda t} \left( \int_0^\infty \ee^{-\gamma t} C(x;\gamma) \widehat{\Delta}(\diff \gamma) \right) \diff t .
\end{align*}
Letting here $x\downarrow 0$ and evoking~\eqref{eq_Cto0} yields the claim in Lemma 2, i.e.,~\eqref{eq_f0} -- once we have checked that the limit can be taken inside the integrals. For this, recall for~$x<1$ the following estimate from \cite[p.~1972]{Salminen}:
\begin{equation*}
|\ee^{-\gamma t} C(x;\gamma)| \leq S(x) \ee^{-\gamma t/2} \leq S(1) \ee^{-\gamma t/2}.
\end{equation*}
Consequently, by Fubini's theorem
\begin{align}
\nonumber \biggl| \int_0^\infty t^k \ee^{-\lambda t} \biggl( \int_0^\infty \ee^{-\gamma t} C(x;\gamma) \widehat{\Delta}(\diff \gamma) \biggr) \diff t \biggr| 
&\leq \int_0^\infty t^k \ee^{-\lambda t} \biggl( \int_0^\infty S(1) \ee^{-\gamma t/2} \widehat{\Delta}(\diff \gamma) \biggr) \diff t \\
&= S(1) \int_0^\infty \biggl( \int_0^\infty t^k \ee^{-(2\lambda+\gamma )t/2} \diff t \biggr)\widehat{\Delta}(\diff \gamma) \nonumber\\
&= S(1) \int_0^\infty \frac{2^{k+1}\, k!}{(2\lambda+\gamma)^{k+1}}\widehat{\Delta}(\diff \gamma). \label{eq_dominated}
\end{align}
By~\eqref{delta} the right hand side of~\eqref{eq_dominated} is finite. Hence, by dominated convergence, 
\begin{equation*}
\lim_{x\downarrow 0} \bigl| \widehat{f}_\lambda^{(k)}(x;\lambda) \bigr| = \int_0^\infty t^k \ee^{-\lambda t} \left( \int_0^\infty \ee^{-\gamma t} \lim_{x\downarrow 0} C(x;\gamma) \widehat{\Delta}(\diff \gamma) \right) \diff t = 0,
\end{equation*}
as claimed in~\eqref{eq_f0}.

\subsection*{Acknowledgments} 
We thank Ernesto Mordecki for a discussion which triggered the research presented in this paper. The research of D.~Stenlund was supported in part by a grant from the Magnus Ehrnrooth foundation.


\begin{thebibliography}{99}

\bibitem{Alili}
Alili, L., Aylwin, A.: On the semi-group of a scaled skew {B}essel process.
\newblock Statist. Probab. Lett. \textbf{145}, 96--102 (2019)

\bibitem{Appuhamillage}
Appuhamillage, T., Bokil, V., Thomann, E., Waymire, E., Wood, B.: Occupation
  and local times for skew {B}rownian motion with applications to dispersion
  across an interface.
\newblock Ann. Appl. Probab. \textbf{21}(1), 183--214 (2011)

\bibitem{Barlow2}
Barlow, M., Pitman, J., Yor, M.: On {W}alsh's {B}rownian motions.
\newblock In: S\'{e}minaire de {P}robabilit\'{e}s, {XXIII}, \emph{Lecture Notes
  in Math.}, vol. 1372, pp. 275--293. Springer, Berlin (1989)

\bibitem{Barlow}
Barlow, M., Pitman, J., Yor, M.: Une extension multidimensionnelle de la loi de
  l'arc sinus.
\newblock In: S\'{e}minaire de {P}robabilit\'{e}s, {XXIII}, \emph{Lecture Notes
  in Math.}, vol. 1372, pp. 294--314. Springer, Berlin (1989)

\bibitem{Blei}
Blei, S.: On symmetric and skew {B}essel processes.
\newblock Stochastic Process. Appl. \textbf{122}(9), 3262--3287 (2012)

\bibitem{handbook}
Borodin, A.N., Salminen, P.: Handbook of {B}rownian Motion---Facts and
  Formulae, 2nd corr. edn.
\newblock Birkh\"{a}user, Basel (2015)

\bibitem{Chu}
Chu, W.: Elementary proofs for convolution identities of {A}bel and
  {H}agen-{R}othe.
\newblock Electron. J. Combin. \textbf{17}(1), Note 24 (2010)

\bibitem{Durrett}
Durrett, R.: Stochastic Calculus: A Practical Introduction.
\newblock CRC Press, Boca Raton (1996)

\bibitem{Feller}
Feller, W.: An Introduction to Probability Theory and Its Applications. {V}ol.
  {II}.
\newblock John Wiley \& Sons, New York (1966)

\bibitem{Fitzsimmons}
Fitzsimmons, P.J., Pitman, J.: Kac's moment formula and the {F}eynman-{K}ac
  formula for additive functionals of a {M}arkov process.
\newblock Stochastic Process. Appl. \textbf{79}(1), 117--134 (1999)

\bibitem{Gould}
Gould, H.W.: Combinatorial Identities.
\newblock Morgantown (1972)

\bibitem{Gradshteyn}
Gradshteyn, I.S., Ryzhik, I.M.: Table of Integrals, Series, and Products, 7th
  edn.
\newblock Elsevier/Academic Press, Amsterdam (2007)

\bibitem{Graham}
Graham, R.L., Knuth, D.E., Patashnik, O.: Concrete Mathematics, 2nd edn.
\newblock Addison-Wesley, Reading (1994)

\bibitem{Ito}
It\^{o}, K., McKean Jr., H.P.: Diffusion Processes and Their Sample Paths.
\newblock Springer, Berlin (1974)

\bibitem{Kac}
Kac, M.: On some connections between probability theory and differential and
  integral equations.
\newblock In: J.~Neyman (ed.) Proceedings of the {S}econd {B}erkeley
  {S}ymposium on {M}athematical {S}tatistics and {P}robability, 1950, pp.
  189--215. Univ. of California Press, Berkeley (1951)

\bibitem{Karatzas}
Karatzas, I., Shreve, S.E.: Brownian Motion and Stochastic Calculus, 2nd edn.
\newblock Springer, Berlin (1991)

\bibitem{KasaharaYano}
Kasahara, Y., Yano, Y.: On a generalized arc-sine law for one-dimensional
  diffusion processes.
\newblock Osaka J. Math. \textbf{42}(1), 1--10 (2005)

\bibitem{Knuth}
Knuth, D.E.: Two notes on notation.
\newblock Amer. Math. Monthly \textbf{99}(5), 403--422 (1992)

\bibitem{Lamperti}
Lamperti, J.: An occupation time theorem for a class of stochastic processes.
\newblock Trans. Amer. Math. Soc. \textbf{88}, 380--387 (1958)

\bibitem{Lejay}
Lejay, A.: On the constructions of the skew {B}rownian motion.
\newblock Probab. Surv. \textbf{3}, 413--466 (2006)

\bibitem{Pigato}
Lejay, A., Pigato, P.: Statistical estimation of the oscillating {B}rownian
  motion.
\newblock Bernoulli \textbf{24}(4B), 3568--3602 (2018)

\bibitem{Levy}
L\'{e}vy, P.: Sur certains processus stochastiques homog\`enes.
\newblock Compositio Math. \textbf{7}, 283--339 (1939)

\bibitem{Mordecki}
Mordecki, E., Salminen, P.: Optimal stopping of oscillating {B}rownian motion.
\newblock (To appear in \emph{Electron. Commun. Probab.}) Preprint (2019).
\newblock \href{https://arxiv.org/abs/1903.01457}{\texttt{arXiv:1903.01457}}.

\bibitem{Moerters}
M\"{o}rters, P., Peres, Y.: Brownian Motion.
\newblock Cambridge University Press, Cambridge (2010)

\bibitem{PY}
Pitman, J., Yor, M.: Hitting, occupation and inverse local times of
  one-dimensional diffusions: martingale and excursion approaches.
\newblock Bernoulli \textbf{9}(1), 1--24 (2003)

\bibitem{Revuz}
Revuz, D., Yor, M.: Continuous Martingales and {B}rownian Motion, 3rd edn.
\newblock Springer, Berlin (1999)

\bibitem{Salminen}
Salminen, P., Vallois, P.: On subexponentiality of the {L}\'{e}vy measure of
  the diffusion inverse local time; with applications to penalizations.
\newblock Electron. J. Probab. \textbf{14}, 1963--1991 (2009)

\bibitem{Sato}
Sato, K.: L\'{e}vy Processes and Infinitely Divisible Distributions.
\newblock Cambridge University Press, Cambridge (1999)

\bibitem{Stenlund}
Stenlund, D.: A note on an identity connecting Stirling numbers and Bessel numbers.
\newblock Manuscript in preparation. (2019)

\bibitem{Truman}
Truman, A., Williams, D.: A generalised arc-sine law and {N}elson's stochastic
  mechanics of one-dimensional time-homogeneous diffusions.
\newblock In: M.A. Pinsky (ed.) Diffusion Processes and Related Problems in
  Analysis, {V}ol. {I} ({E}vanston, {IL}, 1989), pp. 117--135. Birkh\"{a}user,
  Boston (1990)

\bibitem{Vakeroudis}
Vakeroudis, S., Yor, M.: A scaling proof for {W}alsh's {B}rownian motion
  extended arc-sine law.
\newblock Electron. Commun. Probab. \textbf{17}, no. 63, 1--9 (2012)

\bibitem{Walsh}
Walsh, J.B.: A diffusion with a discontinuous local time.
\newblock In: Temps locaux, \emph{Ast\'{e}risque}, vol. 52--53, pp. 37--45.
  Soci\'{e}t\'{e} math\'{e}matique de France, Paris (1978)

\bibitem{Watanabe}
Watanabe, S.: Generalized arc-sine laws for one-dimensional diffusion processes
  and random walks.
\newblock In: M.C. Cranston, M.A. Pinsky (eds.) Stochastic analysis ({I}thaca,
  {NY}, 1993), \emph{Proc. Sympos. Pure Math.}, vol.~57, pp. 157--172. Amer.
  Math. Soc., Providence (1995)

\bibitem{WYY}
Watanabe, S., Yano, K., Yano, Y.: A density formula for the law of time spent
  on the positive side of one-dimensional diffusion processes.
\newblock J. Math. Kyoto Univ. \textbf{45}(4), 781--806 (2005)

\bibitem{YangQiao}
Yang, S.L., Qiao, Z.K.: The {B}essel numbers and {B}essel matrices.
\newblock J. Math. Res. Exposition, \textbf{31}(4), 627--636 (2011)

\bibitem{Yano}
Yano, Y.: On the joint law of the occupation times for a diffusion process on
  multiray.
\newblock J. Theoret. Probab. \textbf{30}(2), 490--509 (2017)

\end{thebibliography}
\end{document}